\providecommand\@dotsep{5}
\def\listtodoname{List of Todos}
\def\listoftodos{\@starttoc{tdo}\listtodoname}
\numberwithin{equation}{section}
\numberwithin{equation}{section}
\newtheorem{thm}{Theorem}[section]
\newtheorem{prop}[thm]{Proposition}
\newtheorem{lem}[thm]{Lemma}
\newtheorem{cor}[thm]{Corollary}
\theoremstyle{definition}
\newtheorem{defn}[thm]{Definition}
\newtheoremstyle{hypstyle}
{3pt}
{3pt}
{}
{}
{}
{}
{.5em}
{(\thmname{#1}$_{\thmnumber{#2}}$)}
\theoremstyle{hypstyle}
\newtheorem{hyp}{V}
\DeclarePairedDelimiter{\abs}{\lvert}{\rvert}
\DeclarePairedDelimiter{\norm}{\lVert}{\rVert}
\DeclarePairedDelimiter{\parens}{(}{)}
\DeclarePairedDelimiter{\set}{\{}{\}}
\DeclarePairedDelimiter{\brackets}{\lbrack}{\rbrack}
\DeclarePairedDelimiter{\angles}{\langle}{\rangle}
\DeclarePairedDelimiter{\coi}{\lbrack}{\lbrack}
\DeclarePairedDelimiter{\ooi}{\rbrack}{\lbrack}
\DeclareMathOperator{\cupl}{cupl}
\DeclareMathOperator{\cat}{cat}
\DeclareMathOperator{\id}{id}
\newcommand{\real}{\mathbb{R}}
\newcommand{\nat}{\mathbb{N}}
\newcommand{\Tan}{\mathrm{T}}
\newcommand{\Dif}{\mathrm{D}}
\newcommand{\dif}{\mathrm{d}}
\newcommand{\End}{\mathcal{L}}
\title[ semiclassical states for a
quasilinear Schr\"odinger-Poisson system]
{Existence and concentration of \\ semiclassical bound states for a \\
quasilinear Schr\"odinger-Poisson system}
\author[G. de Paula Ramos]{Gustavo de Paula Ramos }
\author[G. Siciliano]{Gaetano Siciliano}
\address[G. de Paula Ramos, G. Siciliano]{\newline\indent Departamento de Matem\'atica
\newline\indent 
Instituto de Matem\'atica e Estat\'istica
\newline\indent 
 Universidade de S\~ao Paulo 
\newline\indent 
Rua do Mat\~ao 1010,  05508-090, S\~ao Paulo, SP, Brazil }
\email{\href{gpramos@ime.usp.br,}{gpramos@ime.usp.br,}
\href{mailto:sicilian@ime.usp.br}{sicilian@ime.usp.br}
}
\subjclass[2000]
{
35J10, 
35J50, 
35Q60, 
}
\begin{document}

\maketitle

\begin{abstract}
In the paper 
we consider the following 
 quasilinear Schr\"odinger--Poisson system in the whole space $\mathbb R^{3}$
\[
\begin{cases}
	- \varepsilon^2 \Delta u + \parens{V + \phi} u = u \abs{u}^{p - 1} \\
	- \Delta \phi - \beta \Delta_4 \phi = u^2,
\end{cases}
\]
 where $1 < p < 5, \beta > 0,V :\mathbb R^{3}\to ]0, \infty[$
and look for solutions $u,\phi:\mathbb R^{3}\to \mathbb R$ in the semiclassical regime, namely when $\varepsilon\to 0.$
By means of the Lyapunov--Schmidt method  we estimate the number of solutions by the cup-length
of the critical manifold of the external potential $V$.

\smallskip

\noindent \textbf{Keywords.} quasilinear Schr\"odinger--Poisson equations, perturbation methods, nonlocal problems, semiclassical states, asymptotic behavior.

\end{abstract}
\maketitle

\medskip

\begin{center}
\begin{minipage}{12cm}
\tableofcontents
\end{minipage}
\end{center}

\section{Introduction}

A very active topic of research in nonlinear differential equations consists in investigations about standing wave solutions to the nonlinear Schr\"odinger equation, i.e., functions $u \colon \real^3 \to \real$ that satisfy
\begin{equation} \label{NLS_epsilon}
- \varepsilon^2 \Delta u + V u = u \abs{u}^{p - 1}\quad\text{in}~\real^3,
\end{equation}
where $\varepsilon$ denotes a positive constant. In this context, we say that we are considering the \emph{semiclassical limit} when we suppose that $\varepsilon$ can be taken to be arbitrarily small -- a hypothesis that roughly describes the behavior in the interface between classical and quantum mechanics.
The function $V:\mathbb R^{3}\to \mathbb R$ is an external given potential.

In the present paper, we are interested in a slightly different version of the previous equation
where a further potential, depending  on the same $u$, is present.

More specifically, we are interested in an equation that describes
a model for the electrostatic self-interaction of electrically charged matter. 
In fact we consider the \emph{quasilinear Schr\"odinger-Poisson system} in $\mathbb R^{3}$
\begin{equation} \label{QLSP_eps} 
\begin{cases}
	- \varepsilon^2 \Delta u + \parens{V + \phi} u = u \abs{u}^{p - 1}, \quad 1<p<5 \\
	- \Delta \phi - \beta \Delta_4 \phi = u^2
\end{cases}
\end{equation}
and look for 
solutions   in the semiclassical limit, i.e. when $\varepsilon\to 0$.
As before, $V$ is an external given potential,
but now a further potential is present, which is $\phi \colon \real^3 \to \real$ and represents the electrostatic potential generated by the wave function;
 $\beta$ denotes a fixed positive constant and
\(\Delta_4 \phi := \nabla \cdot \parens{\abs{\nabla \phi}^2 \nabla \phi}\). Then the unknowns are 
$u,\phi:\mathbb R^{3}\to \mathbb R$.

When $\beta=0$ the equation for the electrostatic potential is just the Poisson equation, so the system reduces to
the well known {\sl Schr\"odinger-Poisson system} in $\mathbb R^{3}$
\begin{equation*}
\begin{cases}
	- \varepsilon^2 \Delta u + \parens{V + \phi} u = u \abs{u}^{p - 1}, \\
	- \Delta \phi = u^2
\end{cases}
\end{equation*}
where an explicit form for $\phi$ is known, namely
\[
\real^3 \ni x
\mapsto
\frac{1}{4 \pi} \int \frac{u \parens{x}^2}{\abs{x-y}} \dif y
\in
\coi{0, \infty}.
\]

The interest in studying a problem   where the second equation is an high order nonlinear
perturbation of  the classical Laplacian, relies in the fact that in some physical systems
(especially quantum mechanical models of extremely small devices in semi-conductor nanostructures)
the longitudinal field oscillations during the beam propagation has to be taken into account. 
In this case the intensity-dependent dielectric permittivity depends on the field itself and is of type 
$c_{diel}(\nabla \phi)= 1 + \varepsilon^{4}|\nabla \phi|^{2}, \varepsilon > 0$.
This model   and the corresponding equation of propagation was introduced in 
\cite{AAS} (see also \cite{MRS}).

In the mathematical literature, it seems that quasilinear Schr\"odinger-Poisson type systems
have been first addressed
 in the papers \cite{BK08, IKL, ILT}. 
However the literature is quite poor with respect to the much more studied Schr\"odinger-Poisson system.
The existing literature is essentially reduced to few papers we resume here.
In paper \cite{DLMZ} the problem with an  asymptotically linear nonlinearity 
is considered and a ground state solution is found. The problem in a bounded planar domain
is considered in \cite{FS2, PJH} with a critical and logarithmic nonlinearity.
Papers \cite{H,LY} deal with the initial boundary value problem. The zero mass case
is addressed in  \cite{WLZ, WLZ2}.
Finally we cite \cite{FS20} where the deduction of the quasilinear Schr\"odinger-Poisson
system  is done in the framework of Abelian Gauge Theories
and the case with a critical nonlinearity is studied. 
However all these papers deal with the case $\varepsilon=1$  and solutions are found 
by means of Mountain Pass type arguments and Critical Point Theory. In particular
the compactness has been recovered taking advantage of the boundedness of the domain, 
or by  exploiting the presence a  suitable parameter or  special symmetries of the problem. 

In this context, the motivation for the present paper is that, to the best of our knowledge, there are no studies on the existence of semiclassical states for \eqref{QLSP_eps},
although the Lyapunov--Schmidt method has been  successfully applied to construct semiclassical states for 
the Schr\"odinger equation 
in \cite{ABC1997, AMS01} (see also \cite[Chapter 8]{AM06}) and for the Schr\"odinger-Poisson system
 in \cite{IV08}.


%

\medskip

Coming back to our problem, note that after the change of variable $x \mapsto \varepsilon x$, \eqref{QLSP_eps}
becomes 
\begin{equation} \label{QLSP'_epsilon}
\begin{cases}
	- \Delta u + \parens{V_\varepsilon + \phi} u = u \abs{u}^{p - 1} \\
	- \varepsilon^{-2} \Delta \phi - \beta \varepsilon^{-4} \Delta_4 \phi = u^2
\end{cases}
\end{equation}
to which we will refer from now on.
We assume the following on the external potential $V \colon \real^3 \to \ooi{0, \infty}$:
\begin{hyp} [V$_1$] \label{V_1}
$V$ is of class $C^2$ with $\norm{V}_{C^2}<\infty$;	
\end{hyp}
\begin{hyp} [V$_2$] \label{V_2}
$\inf V>0$.
\end{hyp}

In order to state the main result  we need some preliminaries.
Let $\mathcal{D}^{1, 2}$ and $H^1$ respectively denote the Hilbert spaces obtained as completions of $C_c^\infty$ with respect to
\[
\angles{u_1 \mid u_2}_{\mathcal{D}^{1, 2}}
:=
\int \nabla u_1 \cdot  \nabla u_2
\quad\text{and}\quad
\angles{u_1 \mid u_2}_{H^1}
:=
\int \parens{\nabla u_1 \cdot \nabla u_2 + u_1 u_2}.
\]
For notational purposes, we also define $H^1_\varepsilon$ as the Hilbert space obtained as completion of $C_c^\infty$ with respect to
\[
\angles{u_1 \mid u_2}_{H^1_\varepsilon}
:=
\int \parens{\nabla u_1 \cdot \nabla u_2 + V_\varepsilon u_1 u_2},
\]
where
\[
V_\varepsilon \parens{x} := V \parens{\varepsilon x},
\]
being clear that $H^1_\varepsilon$ is naturally isomorphic to $H^1$ due to (\nameref{V_1}), (\nameref{V_2}).
Let also $\mathcal{D}^{1, 4}$ and $X$ denote the  Banach spaces defined as the completions of $C_c^\infty$ with respect to
\[
\norm{u}_{\mathcal{D}^{1, 4}} := \parens*{\int \abs{\nabla u}^4}^{1/4}
\quad\text{and}\quad
\norm{u}_X := \norm{u}_{\mathcal{D}^{1, 2}} + \norm{u}_{\mathcal{D}^{1, 4}}.
\]
By definition, a \emph{weak solution} of \eqref{QLSP'_epsilon} is a pair $\parens{u, \phi} \in H^1 \times X$ such that
\[
\int \left (
	 \nabla u \cdot \nabla w
	+
	\parens{V_{\varepsilon} + \phi} u w
\right)
=
\int u \abs{u}^{p-1} w
\]
and
\[
\varepsilon^{-2}\int \nabla \phi \cdot \nabla w
+
\beta\varepsilon^{-4}
\int \abs{\nabla \phi}^2 \nabla \phi \cdot \nabla w
=
\int u^2 w
\]
for every $w \in C_c^\infty$.

As we will recall in Section \ref{var_fw}, given $u\in H^{1}$ there is a unique solution $\phi_{\varepsilon}(u)\in X$
of the second equation in \eqref{QLSP'_epsilon}, and  actually there is a functional of the single variable 
$u$, $u\mapsto J_{\varepsilon}(u)$, such that its critical points give the pair $(u_{\varepsilon},\phi_{\varepsilon}(u_{\varepsilon}))$ solution of the problem.
We are then allowed to speak of solution of \eqref{QLSP'_epsilon} as referring just to 
the function $u\in H^{1}$. We will see that the functional is given by
\[
J_\varepsilon \parens{u}
:=
\frac{1}{2} \norm{u}_{H^1_\varepsilon}^2
+
\frac{3}{8} \int \phi_\varepsilon \parens{u} u^2
-
\frac{1}{8 \varepsilon^2} \norm{\phi_\varepsilon \parens{u}}_{\mathcal{D}^{1, 2}}^2
-
\frac{1}{p + 1} \norm{u}_{L^{p + 1}}^{p + 1}.
\]

It is worth noticing that
in contrast to the Schr\"odinger-Poisson system, in our case the unique solution of the second equation, 
$\phi_{\varepsilon}(u)$
 has not an explicit form, so we have to develop the necessary estimates based solely on the abstract properties of the solution operator. 

As we are studying  semiclassical limit,
 we will find solutions which are near a suitable ``particle-like'' profile function.
To make this clear, 
let
\[U \colon \real^3 \to \ooi{0, \infty}\]
be the unique positive spherically symmetric solution of the problem
\[
\begin{cases}
-\Delta u+u=u\abs{u}^{p-1}	&\text{in}~\real^3,\\
u \parens{x} \to 0			&\text{as}~\abs{x} \to \infty
\end{cases}
\]
(see the \cite[p. 23]{K89}). It follows from \cite[Proposition 4.1]{GNN81} that $U, \abs{\nabla U}$ have exponential decay at infinity; more precisely, there exists
$r \in \ooi{0, \infty}$
such that if $\abs{x} > r$, then
\[
U\parens{x}, \abs{\nabla U\parens{x}} \lesssim \abs{x}^{-1} e^{- \abs{x}}.
\]
Given $\lambda \in \ooi{0, \infty}$, it is easy to see that $\lambda^{2/\parens{p-1}} U \parens*{\lambda\, \cdot}$
is a positive solution of
\begin{equation}\label{P_lambda}
\begin{cases}
	-\Delta u+ \lambda^2 u=u\abs{u}^{p-1}	&\text{in}~\real^3,\\
	u \parens{x} \to 0			&\text{as}~\abs{x} \to \infty.
\end{cases}
\end{equation}
Also note that weak solutions in $H^1$ of \eqref{P_lambda} are precisely the critical points of the functional
$\overline{I}_\lambda \colon H^1 \to \real$
defined as
\begin{equation}\label{eq:Ibar}
\overline{I}_\lambda \parens{u}
=
\frac{1}{2} \norm{u}_{\mathcal{D}^{1, 2}}^2
+
\frac{\lambda^2}{2} \norm{u}_{L^2}^2
-
\frac{1}{p+1} \norm{u}_{L^{p+1}}^{p+1}.
\end{equation}

Now we can formalize what we understand by families of solutions  of \eqref{QLSP'_epsilon},
hence critical points of $J_{\varepsilon}$,
concentrated around points in $\real^3$.
\begin{defn}
Let $\varepsilon_0 \in \ooi{0, \infty}, x_0 \in \real^3$ and
\[
\mathcal{A}
:=
\set*{
	u_\varepsilon \in H^1: \nabla J_\varepsilon \parens{u_\varepsilon} = 0
}_{\varepsilon \in \ooi{0, \varepsilon_0}}.
\]
We say that $\mathcal{A}$ is \emph{concentrated} around $x_0$ when
\[
\norm*{
	u_\varepsilon
	-
	\lambda_{x_0}^{2 / \parens{p-1}} U \parens*{
		\frac{\lambda_{x_0}}{\varepsilon} \parens{\cdot - x_0}
	}
}_{H^1}
\to
0
\]
as $\varepsilon \to 0^+$, where $\lambda_{x_0} := V \parens{x_0}^{1 / 2}$.
\end{defn}

Let us recall also the following notions.

\begin{defn} \label{defn:crit_mfd}
Suppose that $f \in C^2 \parens{\real^3}$. We say that $\mathcal{M}$ is a \emph{non-degenerate critical manifold} of $f$ when $\mathcal{M}$ is a submanifold of $\real^3$ and given $x \in \mathcal{M}$, we have both
$\nabla f \parens{x}=0$
and
$\Tan_x \mathcal{M} = \ker \Dif^2_xf$.
\end{defn}


\begin{defn}
Suppose that $M$ is a topological space and let $\check{H}^* \parens{M}$ denotes the Alexander--Spanier cohomology of $M$. 
If $\check{H}^* \parens{M} = 0$, then the \emph{cup-length} of $M$ is defined as $\cupl \parens{M} = 1$.
Otherwise, we set
\[
	\cupl \parens{M} = \sup \set{
		k \in \nat :
		\exists \alpha_1, \ldots, \alpha_k \in \check{H}^* \parens{M};~
		\alpha_1 \cup \ldots \cup \alpha_k \neq 0
	},
\]
where $\cup$ denotes the cup product.
\end{defn}

It often holds that $\cupl \parens{M} = \cat \parens{M}$ but $\cupl \parens{M} \leq \cat \parens{M}$ in general, where $\cat$ denotes the Lusternik--Schnirelmann category. Now  we can finally state our main result.

\begin{thm}\label{thm:mult}
Suppose that $M$ is a compact non-degenerate critical manifold of $V$. 
There is  $\varepsilon_0 > 0$ such that \eqref{QLSP_eps} has at least $\cupl \parens{M} + 1$ weak solutions for any given $\varepsilon \in \ooi{0, \varepsilon_0}$. Furthermore, if
\[
\mathcal{A}
:=
\set*{
	u_n \in H^1: \nabla J_{\varepsilon_n} \parens{u_n} = 0
}_{n \in \nat}
\quad\quad
\parens*{\varepsilon_n \xrightarrow[n \to \infty]{} 0}
\]
is a family of solutions whose existence is guaranteed by the theorem, then, up to subsequence, $\mathcal{A}$ is concentrated around an $x_0 \in M$.
\end{thm}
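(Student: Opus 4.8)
The plan is to carry out a Lyapunov--Schmidt finite-dimensional reduction in the spirit of \cite{ABC1997,AMS01,IV08}, regarding both the Poisson coupling and the slow variation of $V$ as perturbations of the limiting Schr\"odinger energy. Split $J_\varepsilon=I_\varepsilon+G_\varepsilon$ with $I_\varepsilon(u)=\tfrac12\|u\|_{H^1_\varepsilon}^2-\tfrac1{p+1}\|u\|_{L^{p+1}}^{p+1}$ the quasilinear-free functional and $G_\varepsilon(u)=\tfrac38\int\phi_\varepsilon(u)u^2-\tfrac1{8\varepsilon^2}\|\phi_\varepsilon(u)\|_{\mathcal D^{1,2}}^2$ the Poisson contribution. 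For $\xi\in\real^3$ set $\lambda_\xi=V(\varepsilon\xi)^{1/2}$ and let $z_{\varepsilon,\xi}=\lambda_\xi^{2/(p-1)}U(\lambda_\xi(\cdot-\xi))$ be the associated rescaled ground state, which solves \eqref{P_lambda} at the frozen potential level. Using Kwong's uniqueness together with the non-degeneracy $\ker\overline I_1''(U)=\mathrm{span}\{\partial_{x_1}U,\partial_{x_2}U,\partial_{x_3}U\}$, the manifold $Z_\varepsilon=\{z_{\varepsilon,\xi}:\xi\in\real^3\}$ is a non-degenerate critical manifold of the autonomous (frozen) functional, so $I_\varepsilon''(z_{\varepsilon,\xi})$ is invertible on the $H^1_\varepsilon$-orthogonal complement of $T_{z_{\varepsilon,\xi}}Z_\varepsilon$ with a bound uniform in $\xi$ on compacta.

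First I would perform the reduction. Writing $w$ for the orthogonal correction, I solve the auxiliary equation $P^\perp\nabla J_\varepsilon(z_{\varepsilon,\xi}+w)=0$ by the implicit function theorem: the uniform invertibility of $I_\varepsilon''$ on $(T_{z_{\varepsilon,\xi}}Z_\varepsilon)^\perp$ provides a bounded inverse, while the residual $\nabla I_\varepsilon(z_{\varepsilon,\xi})$ (produced by the slow variation of $V_\varepsilon$ across the support scale of $z_{\varepsilon,\xi}$) and the gradient $\nabla G_\varepsilon$ are both small as $\varepsilon\to0$. This yields a unique small solution $w=w_{\varepsilon,\xi}$, depending smoothly on $\xi$, with $\|w_{\varepsilon,\xi}\|_{H^1}\to0$ uniformly on compact sets. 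Setting $\Phi_\varepsilon(\xi):=J_\varepsilon(z_{\varepsilon,\xi}+w_{\varepsilon,\xi})$ reduces the problem to a finite-dimensional one: by the standard principle of the method, $\xi$ is a critical point of $\Phi_\varepsilon$ if and only if $z_{\varepsilon,\xi}+w_{\varepsilon,\xi}$ is a critical point of $J_\varepsilon$.

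Next I would expand $\Phi_\varepsilon$. Its leading term is the frozen ground-state energy $\overline I_{\lambda_\xi}(z_{\varepsilon,\xi})=c_0\,V(\varepsilon\xi)^{\theta}$ for some $c_0>0$ and $\theta=\theta(p)>0$, while the contributions of $G_\varepsilon$ (subleading because $\phi_\varepsilon(u)=O(\varepsilon^2)$, whence $\int\phi_\varepsilon(u)u^2$ and $\varepsilon^{-2}\|\phi_\varepsilon(u)\|_{\mathcal D^{1,2}}^2$ are $O(\varepsilon^2)$) and of the correction $w_{\varepsilon,\xi}$ are of strictly higher order. Consequently, after the change of variable $y=\varepsilon\xi$, the normalized reduced functional is $C^1$-close, uniformly on a neighborhood of $M$, to $y\mapsto c_0 V(y)^\theta$, which shares with $V$ the compact non-degenerate critical manifold $M$. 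I then invoke the abstract result that a $C^1$ function sufficiently close to one possessing a compact non-degenerate critical manifold $M$ has at least $\cupl(M)+1$ critical points in a neighborhood of $M$. This produces, for all small $\varepsilon$, at least $\cupl(M)+1$ critical points of $\Phi_\varepsilon$ near $M$; undoing the reduction and the initial change of variables $x\mapsto\varepsilon x$ gives the claimed number of weak solutions of \eqref{QLSP_eps}.

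The concentration statement then follows at once: each solution equals $z_{\varepsilon,\xi_\varepsilon}+w_{\varepsilon,\xi_\varepsilon}$ with $\|w_{\varepsilon,\xi_\varepsilon}\|_{H^1}\to0$ and concentration point $\varepsilon\xi_\varepsilon$ lying near $M$; by compactness of $M$, along a subsequence $\varepsilon\xi_\varepsilon\to x_0\in M$, and since $z_{\varepsilon,\xi_\varepsilon}$ is precisely the profile built from $U$ and $\lambda_{x_0}=V(x_0)^{1/2}$, the family $\mathcal A$ concentrates around $x_0$ in the sense of the definition. The step I expect to be the main obstacle is the control of the Poisson perturbation $G_\varepsilon$ and its first two derivatives: unlike the case $\beta=0$, the solution $\phi_\varepsilon(u)$ of $-\varepsilon^{-2}\Delta\phi-\beta\varepsilon^{-4}\Delta_4\phi=u^2$ admits no explicit integral representation, so all the smallness and regularity estimates required for both the contraction in the reduction and the $C^1$-expansion of $\Phi_\varepsilon$ must be derived abstractly from the variational solution map $u\mapsto\phi_\varepsilon(u)$, using only its $\mathcal D^{1,2}$ and $\mathcal D^{1,4}$ bounds and establishing the correct $\varepsilon$-order of these quantities uniformly along $Z_\varepsilon$.
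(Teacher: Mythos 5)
Your proposal follows essentially the same route as the paper: the same manifold of rescaled ground states $U_{\varepsilon,z}$, the same Lyapunov--Schmidt reduction with the Poisson term controlled abstractly through the bound $\norm{\phi_\varepsilon(u)}_{\mathcal{D}^{1,2}}\lesssim\varepsilon^2\norm{u}_{L^{12/5}}^2$, the same leading-order expansion $C_0V(\varepsilon z)^\theta$ of the reduced functional, and the same application of Chang's cup-length theorem followed by the standard concentration argument. You also correctly single out the key technical difficulty (the lack of an explicit representation for $\phi_\varepsilon(u)$), which is exactly what the paper identifies as the point of departure from the semilinear Schr\"odinger--Poisson case.
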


Let us state separately the important particular case where $M=\{x_{0}\}$ is a singleton, which follows immediately from the proof of Theorem \ref{thm:mult}.
\begin{cor}
If $x_0$ is a non-degenerate critical point of $V$, then there exist
$\varepsilon_0\in\ooi{0,1}$
and a family
\[
\set*{
	u_n \in H^1: \nabla  J_{\varepsilon_n} (u_n) = 0
}_{n \in \nat}
\]
which is concentrated around $x_0$.
\end{cor}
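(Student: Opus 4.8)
The plan is to establish Theorem \ref{thm:mult} by a finite-dimensional (Lyapunov--Schmidt) reduction in the spirit of \cite{ABC1997, AMS01, IV08}, and then to read off the Corollary as the special case $M = \{x_0\}$. I would first take for granted the variational set-up of Section \ref{var_fw}: for each $u \in H^1$ the second equation of \eqref{QLSP'_epsilon} has a unique solution $\phi_\varepsilon(u) \in X$, the map $u \mapsto \phi_\varepsilon(u)$ is of class $C^1$, and the critical points of the $C^2$ functional $J_\varepsilon$ yield weak solutions $(u, \phi_\varepsilon(u))$. The guiding heuristic is that as $\varepsilon \to 0^+$ the coupling disappears, so that $J_\varepsilon$ becomes a small $C^2$-perturbation of the localized Schr\"odinger energy whose profile is governed by $\overline{I}_{\lambda}$ from \eqref{eq:Ibar}. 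Making this precise requires quantitative bounds showing that $\int \phi_\varepsilon(u) u^2$ and $\varepsilon^{-2}\|\phi_\varepsilon(u)\|_{\mathcal{D}^{1,2}}^2$ are of lower order on the relevant family of functions. Since $\phi_\varepsilon(u)$ has no explicit form and solves a genuinely nonlinear equation (because of $\Delta_4$), these estimates must be drawn solely from the monotonicity and mapping properties of the solution operator; I expect this to be the main obstacle of the argument.

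I would then construct the manifold $Z$ of approximate solutions: for $\xi$ in a tubular neighborhood of $M$, set $\lambda_\xi := V(\xi)^{1/2}$ and let $z_{\varepsilon, \xi} := \lambda_\xi^{2/(p-1)} U\!\left((\lambda_\xi/\varepsilon)(\cdot - \xi)\right)$ be the rescaled translate of the ground state $U$ adapted to the frozen value $V(\xi)$, exactly the profile appearing in the Definition of concentration. The crucial structural input is the non-degeneracy of $U$ as a solution of \eqref{P_lambda}: the kernel of the linearization $-\Delta + 1 - pU^{p-1}$ is exactly $\mathrm{span}\{\partial_{x_1}U, \partial_{x_2}U, \partial_{x_3}U\}$, the tangent space to the orbit of translations. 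Splitting $H^1_\varepsilon = \Tan_{z_{\varepsilon,\xi}} Z \oplus (\Tan_{z_{\varepsilon,\xi}} Z)^\perp$, I would solve the auxiliary equation -- the projection of $\nabla J_\varepsilon(z_{\varepsilon,\xi} + w) = 0$ onto the normal space -- for a correction $w = w_\varepsilon(\xi)$. Non-degeneracy makes the linearized operator invertible on the normal space uniformly for small $\varepsilon$, so a contraction-mapping argument produces a unique, smoothly $\xi$-dependent $w_\varepsilon(\xi)$ with $\|w_\varepsilon(\xi)\|_{H^1} \to 0$.

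Next I would pass to the reduced functional $\Phi_\varepsilon(\xi) := J_\varepsilon(z_{\varepsilon,\xi} + w_\varepsilon(\xi))$, whose critical points correspond exactly to the critical points of $J_\varepsilon$ lying near $Z$. Expanding $\Phi_\varepsilon$ and invoking the estimates of the first step, its leading $\xi$-dependence is a fixed positive multiple of a strictly monotone smooth function of $V(\xi)$ (entering through $\lambda_\xi$), so that on the tubular neighborhood $\Phi_\varepsilon$ is a $C^1$-small perturbation of a function whose critical set coincides with $M$. Since $M$ is a compact non-degenerate critical manifold, the abstract cup-length result for functionals close to ones with a non-degenerate critical manifold (see \cite[Chapter 8]{AM06}) provides at least $\cupl(M)+1$ critical points of $\Phi_\varepsilon$ for $\varepsilon$ small, hence at least $\cupl(M)+1$ weak solutions of \eqref{QLSP_eps}. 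The concentration statement follows from the construction: each solution has the form $z_{\varepsilon,\xi_\varepsilon} + w_\varepsilon(\xi_\varepsilon)$ with $\xi_\varepsilon$ confined to the shrinking neighborhood of $M$, so along a subsequence $\xi_\varepsilon \to x_0 \in M$ while $\|w_\varepsilon(\xi_\varepsilon)\|_{H^1} \to 0$.

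Finally, the Corollary is the case $M = \{x_0\}$: a non-degenerate critical point is a $0$-dimensional compact non-degenerate critical manifold, its tangent space being trivial and equal to $\ker \Dif^2_{x_0} V$ since the Hessian is invertible. Theorem \ref{thm:mult} then yields at least one family of solutions which, $M$ being a singleton, is necessarily concentrated around $x_0$ (shrinking $\varepsilon_0$ if needed so that $\varepsilon_0 \in \ooi{0,1}$).
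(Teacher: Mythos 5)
Your proposal follows essentially the same route as the paper: the Lyapunov--Schmidt reduction around the manifold $Z_\varepsilon$ of rescaled ground states, solution of the auxiliary equation by contraction, passage to the reduced functional whose leading term is $C_0 V(\varepsilon z)^\theta$, the cup-length perturbation theorem for compact non-degenerate critical manifolds, and then the Corollary read off as the singleton case $M=\{x_0\}$ (where concentration follows since the only possible accumulation point of the concentration centers is $x_0$). This matches the paper, which states the Corollary as an immediate consequence of the proof of Theorem \ref{thm:mult}.
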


\medskip

The paper is organized as follows. In Section \ref{var_fw}, we explore the properties of the solution operator
for the second equation in \eqref{QLSP'_epsilon} and give the variational framework and the energy functional $J_{\varepsilon}$
for the problem.
In Section \ref{sect:LS}, we perform the Lyapunov--Schmidt reduction. Finally, Theorem \ref{thm:mult} is proved in Section \ref{sect:mult}.

\subsection*{Notation}

Given functions $f, g\colon A \to \real$, 
we write $f\parens{a}\lesssim g\parens{a}$ for every $a\in A$ when there exists $C\in\ooi{0,\infty}$ such that
$\abs{f \parens{a}} \leq C \abs{g \parens{a}}$
for every $a\in A$.

The set $\set{e_1, e_2, e_3}$ will denote the canonical basis of $\real^3$. The integration domain will always  be $\real^3$ 
and we will omit, unless necessary,  the variable of integration (usually $x$) as well $\dif x$. 
We only consider functional spaces of functions defined in $\real^3$.

In general, brackets are used to enclose the argument of (multi-)linear functions. 
If  $X$, $Y$ are Banach spaces,  the set of continuous linear operators from $X$ to $Y$ 
is denoted with $\End \parens{X, Y}$; if $Y=\mathbb R$ we use the notation $X^{*}$. If $F \colon X \to Y$ is differentiable and $x \in X$, then
\[\Dif_x F : v\in X   \longmapsto  \Dif_{x}F[v] \in  Y\]
denotes the (Frechet) derivative of $F$ at $x$ evaluated in $v$. We also let $\nabla J_{\varepsilon} \parens{u} \in H^1$ denote the gradient of $J_\varepsilon$ with respect to the $H^1$-inner product computed at $u \in H^1$.

We will regard $L^{6 / 5}$ as $\parens{L^6}^*$  due to the identification of $u \in L^{6 / 5}$ with the continuous linear functional
$L^6 \ni w \mapsto \int uw \in \real$. Likewise, we often implicitly employ the Sobolev embeddings
\(
H^1, X \hookrightarrow \mathcal{D}^{1, 2} \hookrightarrow L^6
\)
and
\(
H^1 \hookrightarrow L^{12 / 5}.
\)
Other notations will be introduced whenever we need.

\medskip

\section{
The variational framework} \label{var_fw}


Let us start with the second equation of the system, namely
\begin{equation}\label{eqn:QLPois}
	- \varepsilon^{-2} \Delta \phi - \beta \varepsilon^{-4} \Delta_4 \phi = u^2,
\end{equation}
where  $u$ is given in $H^{1}$.
Following  \cite[Sections 3.1, 3.2]{FS20},  consider the more general equation
\begin{equation}\label{eqn:Tphi=g}
T_\varepsilon \parens{\phi}=g, \quad \phi \in X
\end{equation}
for a given $g \in X^*$,
where $T_\varepsilon \colon X \to X^*$ is defined as
\[
T_\varepsilon \parens{\xi} \brackets{\chi}
=
\frac{1}{\varepsilon^2} \angles{\xi \mid \chi}_{\mathcal{D}^{1, 2}}
+
\frac{\beta}{\varepsilon^4}
\int \abs{\nabla \xi}^2 \nabla \xi \cdot \nabla \chi
\]
and let us study the regularity of $T_\varepsilon$.

\begin{lem} \label{lem:regularity_T_eps}
The mapping $T_\varepsilon$ is of class $C^1$ and its derivative is given by
\begin{equation} \label{eqn:derivative_of_T_epsilon}
\Dif_\xi T_\varepsilon \brackets{\chi, \zeta}
=
\frac{1}{\varepsilon^2} \angles{\chi \mid \zeta}_{\mathcal{D}^{1, 2}}
+
\\
\frac{\beta}{\varepsilon^4} \left(
	2
	\int 
		\parens{\nabla \xi \cdot \nabla \chi}
		\parens{\nabla \xi \cdot \nabla \zeta}
	+
	\int \abs{\nabla \xi}^2 \nabla \chi \cdot \nabla \zeta
\right).
\end{equation}
\end{lem}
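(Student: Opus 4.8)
The plan is to treat the two summands of $T_\varepsilon$ separately. The first, $\frac{1}{\varepsilon^2}\langle\xi\mid\chi\rangle_{\mathcal{D}^{1,2}}$, is linear in $\xi$ and therefore automatically of class $C^\infty$, contributing exactly the term $\frac{1}{\varepsilon^2}\langle\chi\mid\zeta\rangle_{\mathcal{D}^{1,2}}$ to the derivative; so all the substance lies in the cubic term $N(\xi)[\chi]:=\int\abs{\nabla\xi}^2\,\nabla\xi\cdot\nabla\chi$. My strategy will be to realise $N$ as the restriction to the diagonal of a bounded trilinear form, after which both the differentiability and the explicit formula \eqref{eqn:derivative_of_T_epsilon} become essentially automatic.

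First I would introduce the trilinear map $B\colon X^3\to X^*$ given by $B(\xi_1,\xi_2,\xi_3)[\chi]:=\int(\nabla\xi_1\cdot\nabla\xi_2)(\nabla\xi_3\cdot\nabla\chi)$, and check via the generalised Hölder inequality with four factors in $L^4$ that $|B(\xi_1,\xi_2,\xi_3)[\chi]|\le\prod_i\norm{\xi_i}_{\mathcal{D}^{1,4}}\norm{\chi}_{\mathcal{D}^{1,4}}$, whence $\norm{B(\xi_1,\xi_2,\xi_3)}_{X^*}\le\prod_i\norm{\xi_i}_X$. This shows $B$ is a bounded trilinear map into $X^*$ and, as a byproduct, that $T_\varepsilon(\xi)\in X^*$ for every $\xi$. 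Since $N(\xi)=B(\xi,\xi,\xi)$, the map $N$ is precisely the diagonal of $B$.

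Next I would expand $N(\xi+\zeta)=B(\xi+\zeta,\xi+\zeta,\xi+\zeta)$ by trilinearity. The part that is linear in $\zeta$ is $B(\zeta,\xi,\xi)+B(\xi,\zeta,\xi)+B(\xi,\xi,\zeta)$; collecting terms (the first two coincide) this equals $2\int(\nabla\xi\cdot\nabla\chi)(\nabla\xi\cdot\nabla\zeta)+\int\abs{\nabla\xi}^2\,\nabla\chi\cdot\nabla\zeta$, that is, the bracketed expression in \eqref{eqn:derivative_of_T_epsilon}. The remaining terms are quadratic and cubic in $\zeta$, so by the bound above their $X^*$-norm is at most $3\norm{\xi}_X\norm{\zeta}_X^2+\norm{\zeta}_X^3=o(\norm{\zeta}_X)$, which gives Fréchet differentiability with the claimed derivative. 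To upgrade to $C^1$ I would observe that $\xi\mapsto\Dif_\xi N=B(\cdot,\xi,\xi)+B(\xi,\cdot,\xi)+B(\xi,\xi,\cdot)\in\End(X,X^*)$ depends on $\xi$ only through the bounded bilinear data of $B$, so trilinearity yields $\norm{\Dif_{\xi_1}N-\Dif_{\xi_2}N}_{\End(X,X^*)}\lesssim(\norm{\xi_1}_X+\norm{\xi_2}_X)\norm{\xi_1-\xi_2}_X\to0$, and adding back the derivative of the linear summand completes the argument.

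I do not expect a genuine obstacle: once $N$ is viewed as the diagonal of the bounded trilinear form $B$, smoothness and the derivative formula follow from the standard behaviour of polynomial maps between Banach spaces. The only point demanding care is the bookkeeping of the Hölder exponents, making sure each multilinear term genuinely lands in $X^*$ — this is exactly what forces the $\mathcal{D}^{1,4}$-component of the $X$-norm into play, the $\mathcal{D}^{1,2}$-component being needed only to control the linear summand.
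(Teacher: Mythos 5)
Your argument is correct, and the core estimates are the same as in the paper: both proofs rest on the pointwise Cauchy--Schwarz bound followed by the four-factor H\"older inequality in $L^4$, which is exactly what makes the $\mathcal{D}^{1,4}$ part of the $X$-norm indispensable. The difference is in how differentiability is established. The paper computes the G\^ateaux derivative of $T_\varepsilon$ by an ``elementary computation'' (which is precisely your trilinear expansion, left implicit), verifies that it is a bounded linear functional, proves that $\xi\mapsto \Dif_\xi T_\varepsilon$ is continuous into $\End(X,X^*)$, and then concludes $C^1$ via the standard theorem that a continuous G\^ateaux derivative upgrades to a continuous Fr\'echet derivative. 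You instead package the cubic term as the diagonal of a bounded trilinear form $B$ and obtain Fr\'echet differentiability \emph{directly}, with an explicit remainder bound $3\norm{\xi}_X\norm{\zeta}_X^2+\norm{\zeta}_X^3=o(\norm{\zeta}_X)$; continuity of the derivative then also falls out of trilinearity. Your route is marginally more self-contained (no appeal to the G\^ateaux-to-Fr\'echet theorem) and generalizes cleanly to any polynomial map between Banach spaces, while the paper's version is shorter on the page because it outsources both the expansion and the upgrade step. Either proof is acceptable; there is no gap in yours.
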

\begin{proof}
An elementary computation shows that $\Dif^G_\xi T_\varepsilon$, the G\^ateaux derivative of $T_\varepsilon$ in $\xi$, is given by the right-hand side in \eqref{eqn:derivative_of_T_epsilon}. 

Actually, $\Dif^G_\xi T_\varepsilon \brackets{\chi, \cdot} \colon X \to \real$ is a continuous linear functional. Indeed,
\begin{eqnarray*}
\abs*{\Dif^G_\xi T_\varepsilon \brackets{\chi, \zeta}}
&\leq&
\frac{1}{\varepsilon^2}
\norm{\chi}_{\mathcal{D}^{1, 2}} \norm{\zeta}_{\mathcal{D}^{1, 2}}
+
2
\frac{\beta}{\varepsilon^4}
\norm{\nabla \xi \cdot \nabla \chi}_{L^2}
\norm{\nabla \xi \cdot \nabla \zeta}_{L^2}
+
\frac{\beta}{\varepsilon^4}
\norm{\xi}_{\mathcal{D}^{1, 4}}^2 \norm{\nabla \chi \cdot \nabla \zeta}_{L^2}\\
&\leq&
\frac{1}{\varepsilon^2}
\parens*{
	\norm{\chi}_{\mathcal{D}^{1, 2}}
	+
	3 \frac{\beta}{\varepsilon^2} \norm{\xi}_{\mathcal{D}^{1, 4}}^2 \norm{\chi}_{\mathcal{D}^{1, 4}}
}
\norm{\zeta}_X,
\end{eqnarray*}
hence the result.

Let us prove that
$X \ni \xi \mapsto \Dif_\xi T_\varepsilon \in \End \parens{X, X^*}$
is continuous. We have
\begin{eqnarray*}
\parens{\Dif_{\xi_1} T_\varepsilon - \Dif_{\xi_2} T_\varepsilon}
\brackets{\chi, \zeta}
&=&
\frac{1}{\varepsilon^2}
\int 
	\Big(\nabla \parens{\xi_1 - \xi_2} \cdot \nabla \chi\Big)
	\parens{\nabla \xi_1 \cdot \nabla \zeta
}
\\
&+&
\frac{1}{\varepsilon^2}
\int 
	\parens{\nabla \xi_2 \cdot \nabla \chi}
	\Big(\nabla \parens{\xi_1 - \xi_2} \cdot \nabla \zeta
\Big) \\
&+&
\frac{\beta}{\varepsilon^4}
\int 
	\parens{\abs{\nabla \xi_1}^2 - \abs{\nabla \xi_2}^2}
	\nabla \chi \cdot \nabla \zeta.
\end{eqnarray*}
Therefore,
\[
\norm*{\Dif_{\xi_1} T_\varepsilon - \Dif_{\xi_2} T_\varepsilon}_{\End \parens{X, X^*}}
\leq
\frac{1}{\varepsilon^2}
\norm{\xi_1 - \xi_2}_{\mathcal{D}^{1, 4}}
\parens*{\norm{\xi_1}_{\mathcal{D}^{1, 4}} + \norm{\xi_2}_{\mathcal{D}^{1, 4}}}
+
\frac{\beta}{\varepsilon^4}
\norm*{\abs{\nabla \xi_1}^2 - \abs{\nabla \xi_2}^2}_{L^2}
\]
and so $T_\varepsilon$ is of class $C^1$.
\end{proof}

The mapping $T_\varepsilon \colon X \to X^*$ is invertible due to \cite[Theorem 5.16]{B11}, thus  let $\Phi_\varepsilon \colon X^* \to X$ denote its inverse, so that $\Phi_\varepsilon\parens{g}$ is just the unique solution to \eqref{eqn:Tphi=g}
whenever $g$ is given. 
We cannot use the Inverse Function Theorem to deduce that
$\Phi_\varepsilon$ is of class $C^1$, since
\[
X \ni \xi
\mapsto
\Dif_0 T_\varepsilon \brackets{\xi}
=
\frac{1}{\varepsilon^2}
\left.
	\angles{\xi \mid \cdot}_{\mathcal{D}^{1, 2}}
\right|_X
\in X^*
\]
does not admit a continuous inverse. Indeed, consider $\set{f_n}_{n \in \nat} \subset X$ given by
\[
f_n \parens{x}
=
\begin{cases}
\int_{1 / n}^1 t^{- 5 / 4} \dif t
&\text{if}~\abs{x} \leq 1 / n,
\\
\int_{\abs{x}}^1 t^{- 5 / 4} \dif t
&\text{if}~1 / n \leq \abs{x} \leq 1,
\\
0
&\text{if}~\abs{x} \geq 1,
\end{cases}
\]
for every $n \in \nat$ and $x \in \real^3$. It is clear that
\[
\abs{\nabla f_n \parens{x}}
=
\begin{cases}
0
&\text{if}~\abs{x} < 1 / n~\text{or}~\abs{x} > 1,
\\
\abs{x}^{- 5 / 4}
&\text{if}~1 / n < \abs{x} < 1,
\end{cases}
\]
so
$\limsup_{n \to \infty} \norm{f_n}_{\mathcal{D}^{1, 2}} < \infty$,
while
$\lim_{n \to \infty} \norm{f_n}_{\mathcal{D}^{1, 4}} = \infty$. We conclude that there cannot exist $C \in \ooi{0, \infty}$ such that
\[
\norm{\Dif_0 T_\varepsilon\brackets{f_n}}_{X^*}
=
\frac{1}{\varepsilon^2}\norm{f_n}_{\mathcal{D}^{1, 2}}
\geq
C \norm{f_n}_X
\]
for every $n \in \nat$. 
Hence the differentiability of $\Phi_\varepsilon$ is lost at $0$. However we have the following.

\begin{lem} \label{lem:Phi_epsilon_C1}
The function $\Phi_\varepsilon|_{X^* \setminus \set{0}}$ is of class $C^1$.
\end{lem}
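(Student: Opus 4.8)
The plan is to deduce the conclusion from the Inverse Function Theorem applied to \(T_\varepsilon\). By Lemma \ref{lem:regularity_T_eps} the map \(T_\varepsilon\) is of class \(C^1\), and we already know it is a bijection with \(T_\varepsilon \parens{0} = 0\); hence \(g \in X^* \setminus \set{0}\) if and only if \(\Phi_\varepsilon \parens{g} \in X \setminus \set{0}\). Thus it suffices to prove that for every \(\xi \in X \setminus \set{0}\) the bounded linear operator \(\Dif_\xi T_\varepsilon \in \End \parens{X, X^*}\) is a topological isomorphism: granting this, the Inverse Function Theorem yields that \(\Phi_\varepsilon = T_\varepsilon^{-1}\) is of class \(C^1\) on a neighbourhood of each such \(g = T_\varepsilon \parens{\xi}\), with \(\Dif_g \Phi_\varepsilon = \parens{\Dif_{\Phi_\varepsilon \parens{g}} T_\varepsilon}^{-1}\), and the collection of these neighbourhoods covers \(X^* \setminus \set{0}\).

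To verify that \(\Dif_\xi T_\varepsilon\) is an isomorphism I would argue as follows. From \eqref{eqn:derivative_of_T_epsilon} the associated bilinear form is symmetric and, by Lemma \ref{lem:regularity_T_eps}, bounded on \(X\). Testing against the diagonal gives
\[
\Dif_\xi T_\varepsilon \brackets{\chi, \chi}
=
\frac{1}{\varepsilon^2} \norm{\chi}_{\mathcal{D}^{1, 2}}^2
+
\frac{\beta}{\varepsilon^4} \left(
	2 \int \parens{\nabla \xi \cdot \nabla \chi}^2
	+
	\int \abs{\nabla \xi}^2 \abs{\nabla \chi}^2
\right)
\geq
\frac{1}{\varepsilon^2} \norm{\chi}_{\mathcal{D}^{1, 2}}^2,
\]
so the form is positive definite and \(\Dif_\xi T_\varepsilon\) is injective (the vanishing of \(\norm{\chi}_{\mathcal{D}^{1, 2}}\) forces \(\chi = 0\) in \(X\)). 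Since \(\Dif_\xi T_\varepsilon\) is a bounded linear injection between Banach spaces, by the open mapping theorem it will be an isomorphism as soon as it is surjective, and surjectivity would follow from a coercivity estimate of the form \(\Dif_\xi T_\varepsilon \brackets{\chi, \chi} \gtrsim \norm{\chi}_X^2\) via the Lax--Milgram theorem for this symmetric form. The \(\mathcal{D}^{1, 2}\)-ellipticity above already controls \(\norm{\chi}_{\mathcal{D}^{1, 2}}\) uniformly; the remaining task is to recover \(\norm{\chi}_{\mathcal{D}^{1, 4}}\) out of the weighted term \(\int \abs{\nabla \xi}^2 \abs{\nabla \chi}^2\).

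The hard part is precisely this last lower bound. The \(\Delta_4\)-linearization measures \(\nabla \chi\) only through the weight \(\abs{\nabla \xi}^2\), which degenerates wherever \(\nabla \xi\) vanishes or is small, so one cannot dominate the unweighted \(\norm{\chi}_{\mathcal{D}^{1, 4}}\) cheaply; this is the same degeneracy responsible for the loss of differentiability of \(\Phi_\varepsilon\) at \(0\) exhibited before the statement. I would therefore treat this step with care, either by exploiting the structure of the fixed weight \(\abs{\nabla \xi}^2\) for \(\xi \neq 0\) to upgrade the \(\mathcal{D}^{1, 2}\)-control to full \(X\)-control, or---should a uniform coercivity in the whole \(X\)-norm be unavailable---by working locally around \(\xi\) and quantifying how the constant in the estimate deteriorates as \(\xi \to 0\), which is consistent with \(C^1\)-regularity holding on all of \(X^* \setminus \set{0}\) while failing at the origin.
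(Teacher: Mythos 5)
Your framework is the right one and your diagnosis of where the difficulty lies is sharp, but as a proof the proposal is incomplete: the decisive step --- that $\Dif_\xi T_\varepsilon \in \End\parens{X, X^*}$ is a topological isomorphism for $\xi \neq 0$ --- is explicitly left open. The injectivity argument via positive-definiteness of the symmetric form is fine, but surjectivity is exactly where all the work is, and the route you sketch (coercivity $\Dif_\xi T_\varepsilon \brackets{\chi, \chi} \gtrsim \norm{\chi}_X^2$ plus Lax--Milgram) cannot be carried out as stated: first, $X = \mathcal{D}^{1,2} \cap \mathcal{D}^{1,4}$ is a Banach space rather than a Hilbert space, so Lax--Milgram does not literally apply; second, and more importantly, coercivity in the full $X$-norm is false in general --- for $\chi$ supported in a region where $\nabla \xi$ vanishes the form reduces to $\varepsilon^{-2} \norm{\chi}_{\mathcal{D}^{1,2}}^2$, which does not dominate $\norm{\chi}_{\mathcal{D}^{1,4}}^2$. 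So the ``hard part'' you flag is not a technicality to be treated with care later; it is the entire content of the lemma, and without it the argument does not close.

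For comparison, the paper's own proof is a one-liner: it asserts that to invoke the Inverse Function Theorem it suffices to check $\norm{\Dif_\xi T_\varepsilon}_{\End\parens{X, X^*}} > 0$, and verifies this by testing the form at $\chi = \zeta = \xi / \norm{\xi}_X$, which yields $\varepsilon^{-2} \norm{\xi}_{\mathcal{D}^{1,2}}^2 + 3 \beta \varepsilon^{-4} \norm{\xi}_{\mathcal{D}^{1,4}}^4 > 0$. As your own analysis implicitly shows, nonvanishing of the operator norm is far weaker than invertibility of the derivative, so the paper does not resolve the difficulty you identify either; your injectivity computation is in fact more than what is written there. Nevertheless, judged as a proof of the statement, your proposal has a genuine gap at the surjectivity/bounded-invertibility step: you must either supply an argument adapted to the degenerate weight $\abs{\nabla \xi}^2$ or acknowledge that the claim is not established by what you have written.
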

\begin{proof}
In order to conclude from the Inverse Function Theorem, it suffices to prove that $\norm{\Dif_\xi T_\varepsilon}_{\End \parens{X, X^*}} > 0$ for every $\xi \in X \setminus \set{0}$. Indeed, it follows from \eqref{eqn:derivative_of_T_epsilon} that
\[
\norm{\Dif_\xi T_\varepsilon}_{\End \parens{X, X^*}}
\geq
\Dif_\xi T_\varepsilon
\brackets*{
	\frac{\xi}{\norm{\xi}_X},
	\frac{\xi}{\norm{\xi}_X}
}
=
\frac{1}{\norm{\xi}_X^2}
\parens*{
	\frac{1}{\varepsilon^2} \norm{\xi}_{\mathcal{D}^{1, 2}}^2
	+
	3\frac{\beta}{\varepsilon^4}
	\norm{\xi}_{\mathcal{D}^{1, 4}}^4
},
\]
hence the result.
\end{proof}

At this point, we obtain a good answer for the search of solutions to \eqref{eqn:QLPois}.

\begin{prop} \label{prop:phi_epsilon}
The \emph{solution operator},
\[
L^{12 / 5} \setminus \set{0} \ni u
\mapsto
\phi_\varepsilon \parens{u} := \Phi_\varepsilon \parens*{
	\iota^*\brackets{u^2}
}
\in
X,
\]
is of class $C^1$ and takes each $u \in L^{12 / 5} \setminus \set{0}$ to the unique weak solution of \eqref{eqn:QLPois}, where $\iota \colon X \to L^6$ denotes the mapping obtained from the composition of the Sobolev embeddings
$X \hookrightarrow \mathcal{D}^{1, 2} \hookrightarrow L^6$.
\end{prop}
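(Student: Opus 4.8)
The plan is to realize $\phi_\varepsilon$ as a composition of three maps and to check that each factor is of class $C^1$ on the relevant domain, so that the chain rule applies. Writing $S \colon L^{12 / 5} \to L^{6 / 5}$ for the squaring map $u \mapsto u^2$, the embedding adjoint $\iota^* \colon L^{6 / 5} \to X^*$ (recall $L^{6 / 5} = \parens{L^6}^*$), and $\Phi_\varepsilon$ as above, we have $\phi_\varepsilon = \Phi_\varepsilon \circ \iota^* \circ S$. First I would check that $S$ is well defined and smooth: by H\"older's inequality $\norm{u^2}_{L^{6 / 5}} = \norm{u}_{L^{12 / 5}}^2$, so $S$ lands in $L^{6 / 5}$, and since $S$ is the restriction to the diagonal of the bounded bilinear map $\parens{u, v} \mapsto uv$ from $L^{12 / 5} \times L^{12 / 5}$ to $L^{6 / 5}$, it is of class $C^\infty$ with derivative $\Dif_u S \brackets{v} = 2 u v$. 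The map $\iota^*$ is bounded and linear, hence also smooth.

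The crucial point is to ensure that the composition never meets the singular point $0 \in X^*$, where $\Phi_\varepsilon$ fails to be differentiable; this is exactly why the domain is taken to be $L^{12 / 5} \setminus \set{0}$. I would argue that $\iota^* \brackets{u^2} \neq 0$ whenever $u \neq 0$: indeed, $\iota^* \brackets{u^2}$ is the functional $X \ni \chi \mapsto \int u^2 \chi$, and $u \neq 0$ in $L^{12 / 5}$ forces $u^2 > 0$ on a set of positive measure, so one may choose a nonnegative $\chi \in C_c^\infty \subset X$ with $\int u^2 \chi > 0$. Therefore $\iota^* \circ S$ maps $L^{12 / 5} \setminus \set{0}$ into $X^* \setminus \set{0}$, where Lemma \ref{lem:Phi_epsilon_C1} guarantees that $\Phi_\varepsilon$ is of class $C^1$. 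The chain rule then yields that $\phi_\varepsilon$ is of class $C^1$.

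It remains to identify $\phi_\varepsilon \parens{u}$ with the unique weak solution of \eqref{eqn:QLPois}. Since $X \hookrightarrow L^6$, the function $u^2 \in L^{6 / 5}$ determines the element $\iota^* \brackets{u^2} \in X^*$ acting by $\chi \mapsto \int u^2 \chi$, which is precisely the right-hand side in the weak formulation of \eqref{eqn:QLPois}; the left-hand side of that formulation is $T_\varepsilon \parens{\phi} \brackets{\chi}$. Thus a weak solution is exactly a $\phi \in X$ satisfying $T_\varepsilon \parens{\phi} = \iota^* \brackets{u^2}$, and since $T_\varepsilon$ is invertible this solution is unique and equals $\Phi_\varepsilon \parens*{\iota^* \brackets{u^2}} = \phi_\varepsilon \parens{u}$.

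The main obstacle is the second step: steering the composition clear of the origin in $X^*$, around which $\Phi_\varepsilon$ is not even G\^ateaux differentiable, as shown just before the statement. Everything else reduces to standard mapping properties of the squaring operator between Lebesgue spaces together with the invertibility of $T_\varepsilon$.
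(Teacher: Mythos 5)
Your proposal is correct and follows exactly the route the paper intends: the paper states this proposition as an immediate consequence of Lemma \ref{lem:Phi_epsilon_C1} and the invertibility of $T_\varepsilon$, and your decomposition $\phi_\varepsilon = \Phi_\varepsilon \circ \iota^* \circ S$ together with the check that $\iota^*\brackets{u^2} \neq 0$ for $u \neq 0$ is precisely the fleshed-out version of that argument. Nothing is missing.
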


Of course $\phi_{\varepsilon}(0)=0$ although this does not give a $C^1$ extension of $\phi_{\varepsilon}$ to the whole space. The following estimate on $\phi_\varepsilon$ follows immediately and will be often used throughout the paper.

\begin{lem} \label{lem:bound_phi_eps}
We have
\[
\norm{\phi_\varepsilon \parens{u}}_{\mathcal{D}^{1, 2}}
\lesssim
\varepsilon^2 \norm{u}_{L^{12 / 5}}^2
\]
for every
$\parens{\varepsilon, u} \in \ooi{0, 1}
\times
\parens{L^{12 / 5} \setminus \set{0}}$.
\end{lem}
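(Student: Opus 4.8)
The plan is to derive the estimate from the natural energy identity obtained by testing the weak formulation of \eqref{eqn:QLPois} against the solution itself. Write $\phi := \phi_\varepsilon \parens{u}$ for brevity and recall that, by Proposition \ref{prop:phi_epsilon}, $\phi \in X$ is the unique weak solution, so that $T_\varepsilon \parens{\phi} = \iota^* \brackets{u^2}$. The defining weak identity is stated only for test functions in $C_c^\infty$, but since both $T_\varepsilon \parens{\phi}$ and $\iota^* \brackets{u^2}$ are by construction elements of $X^*$ and $C_c^\infty$ is dense in $X$, the identity extends to every $w \in X$; in particular, I may legitimately test with $w = \phi$.

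Testing with $\phi$ yields
\[
\frac{1}{\varepsilon^2} \norm{\phi}_{\mathcal{D}^{1, 2}}^2
+
\frac{\beta}{\varepsilon^4} \norm{\phi}_{\mathcal{D}^{1, 4}}^4
=
\int u^2 \phi.
\]
Because $\beta > 0$, the second term on the left is nonnegative and may be discarded, giving
\[
\frac{1}{\varepsilon^2} \norm{\phi}_{\mathcal{D}^{1, 2}}^2
\leq
\int u^2 \phi.
\]

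To control the right-hand side I would apply Hölder's inequality with exponents $6/5$ and $6$ together with the Sobolev embedding $\mathcal{D}^{1, 2} \hookrightarrow L^6$. Noting that $\norm{u^2}_{L^{6/5}} = \norm{u}_{L^{12/5}}^2$, this gives
\[
\int u^2 \phi
\leq
\norm{u^2}_{L^{6/5}} \norm{\phi}_{L^6}
=
\norm{u}_{L^{12/5}}^2 \norm{\phi}_{L^6}
\lesssim
\norm{u}_{L^{12/5}}^2 \norm{\phi}_{\mathcal{D}^{1, 2}},
\]
where the implicit constant is the Sobolev constant and is in particular independent of $\varepsilon$ and $u$. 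Combining the two displays and dividing through by $\norm{\phi}_{\mathcal{D}^{1, 2}}$ (the case $\phi = 0$ being trivial) produces $\varepsilon^{-2} \norm{\phi}_{\mathcal{D}^{1, 2}} \lesssim \norm{u}_{L^{12/5}}^2$, which is the claimed estimate after multiplying by $\varepsilon^2$.

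This argument is essentially routine, so I do not anticipate a genuine obstacle; the only point requiring care is the passage from the defining weak identity to the test function $w = \phi \in X$, justified as above by density and the $X$-continuity of the relevant functionals. It is worth observing that the $\varepsilon^{-2}$ prefactor cancels exactly, so the hypothesis $\varepsilon \in \ooi{0, 1}$ is not actually needed for this particular estimate and is retained only for uniformity with its subsequent uses.
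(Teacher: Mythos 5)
Your proof is correct and is precisely the standard energy argument the paper has in mind when it says the estimate ``follows immediately'': test $T_\varepsilon(\phi_\varepsilon(u))=\iota^*[u^2]$ with $\phi_\varepsilon(u)$, discard the nonnegative quartic term, and apply H\"older with exponents $6/5$, $6$ together with the Sobolev embedding $\mathcal{D}^{1,2}\hookrightarrow L^6$. Your side remarks (density justifying the test function, and the fact that $\varepsilon\in\,]0,1[$ is not actually needed here) are also accurate.
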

Then we have the following  variational characterization for weak solutions of \eqref{QLSP'_epsilon}.
\begin{prop} \label{prop:variational}
The functional $J_\varepsilon \colon H^1 \setminus \set{0} \to \real$ defined as
\[
J_\varepsilon \parens{u}
:=
\frac{1}{2} \norm{u}_{H^1_\varepsilon}^2
+
\frac{3}{8} \int \phi_\varepsilon \parens{u} u^2
-
\frac{1}{8 \varepsilon^2} \norm{\phi_\varepsilon \parens{u}}_{\mathcal{D}^{1, 2}}^2
-
\frac{1}{p + 1} \norm{u}_{L^{p + 1}}^{p + 1}
\]
is of class $C^2$. Furthermore, its derivatives are given by
\[
\Dif_u J_\varepsilon \brackets{w}
=
\angles{u \mid w}_{H^1_\varepsilon}
+
\int \Big(	\phi_\varepsilon \parens{u} u w-	u \abs{u}^{p-1} w\Big),
\]
\[
\Dif^2_u J_\varepsilon \brackets{w_1, w_2}
=
\angles{w_1 \mid w_2}_{H^1_\varepsilon}
+
\int\Big(\Dif_u \phi_\varepsilon \brackets{w_1} u w_2+	\phi_\varepsilon \parens{u} w_1 w_2
-	p \abs{u}^{p-1} w_1 w_2 \Big)
\]
and the following equivalence holds:
\[
\Dif_{u} J_\varepsilon  = 0
\iff
\parens{u, \phi_\varepsilon \parens{u}}~\text{is a weak solution of}~\eqref{QLSP'_epsilon}.
\]
\end{prop}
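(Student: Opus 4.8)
The plan is to split $J_\varepsilon$ into three harmless ingredients plus one genuinely nonlocal pair, and to reduce the whole computation to a single algebraic identity obtained from the defining equation of $\phi_\varepsilon$. First I would dispose of the quadratic term $\tfrac12\norm{u}_{H^1_\varepsilon}^2$, which is a bounded symmetric bilinear form and hence smooth with derivative $\angles{u\mid w}_{H^1_\varepsilon}$, and of the term $-\tfrac{1}{p+1}\norm{u}_{L^{p+1}}^{p+1}$, which is of class $C^2$ because $2<p+1<6$ gives $H^1\hookrightarrow L^{p+1}$ together with the usual $C^2$-differentiability of the $L^{p+1}$-power, producing the first derivative $\int u\abs{u}^{p-1}w$ and the second derivative $\int p\abs{u}^{p-1}w_1 w_2$. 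By Proposition~\ref{prop:phi_epsilon} together with the embedding $H^1\hookrightarrow L^{12/5}$, the operator $u\mapsto\phi_\varepsilon\parens{u}$ is of class $C^1$ on $H^1\setminus\set{0}$, so the remaining term $N\parens{u}:=\tfrac38\int\phi_\varepsilon\parens{u}u^2-\tfrac{1}{8\varepsilon^2}\norm{\phi_\varepsilon\parens{u}}_{\mathcal{D}^{1,2}}^2$ is at least $C^1$; it only remains to identify $\Dif_u N$.

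The heart of the argument is the evaluation of $\Dif_u N\brackets{w}$. Writing $\xi:=\phi_\varepsilon\parens{u}$ and $\psi:=\Dif_u\phi_\varepsilon\brackets{w}$, the product rule gives
\[
\Dif_u N\brackets{w}
=\frac38\int\psi\,u^2+\frac34\int\xi\,uw-\frac{1}{4\varepsilon^2}\angles{\xi\mid\psi}_{\mathcal{D}^{1,2}}.
\]
To eliminate $\psi$ I would use two consequences of the weak form of $T_\varepsilon\parens{\xi}=\iota^*\brackets{u^2}$, namely
\[
\frac{1}{\varepsilon^2}\angles{\xi\mid\chi}_{\mathcal{D}^{1,2}}+\frac{\beta}{\varepsilon^4}\int\abs{\nabla\xi}^2\nabla\xi\cdot\nabla\chi=\int u^2\chi
\quad\text{for all }\chi\in X.
\]
Testing this identity with $\chi=\psi$ expresses $\int u^2\psi$ through $\angles{\xi\mid\psi}_{\mathcal{D}^{1,2}}$ and a quartic $\mathcal{D}^{1,4}$-type term. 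Differentiating the same identity in $u$ along $w$ and testing the result with $\chi=\xi$ yields, after recognizing the linearized left-hand side as $\Dif_\xi T_\varepsilon\brackets{\psi,\xi}$ via \eqref{eqn:derivative_of_T_epsilon}, the relation
\[
\frac{1}{\varepsilon^2}\angles{\xi\mid\psi}_{\mathcal{D}^{1,2}}+3\frac{\beta}{\varepsilon^4}\int\abs{\nabla\xi}^2\nabla\xi\cdot\nabla\psi=2\int uw\,\xi.
\]
Substituting the first relation into $\int\psi\,u^2$ collapses the $\mathcal{D}^{1,2}$-coefficient from $\tfrac38-\tfrac14$ to $\tfrac18$, and then the second relation shows that $\tfrac{1}{8\varepsilon^2}\angles{\xi\mid\psi}_{\mathcal{D}^{1,2}}+\tfrac{3\beta}{8\varepsilon^4}\int\abs{\nabla\xi}^2\nabla\xi\cdot\nabla\psi=\tfrac14\int\xi uw$, leaving $\Dif_u N\brackets{w}=\int\phi_\varepsilon\parens{u}uw$. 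This is the step I expect to be the main obstacle: the cancellation hinges on the precise coefficients $3/8$ and $1/8$, which are exactly what makes the $\mathcal{D}^{1,2}$-inner products recombine and the quartic $\mathcal{D}^{1,4}$-contributions—appearing with the extra factor $3$ in the linearized equation—cancel, so one must keep careful track of every such term and avoid any stray factor.

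Adding the three pieces yields the announced formula for $\Dif_u J_\varepsilon\brackets{w}$. To obtain $C^2$-regularity and the second derivative, I would differentiate this formula once more in $u$: the bilinear term contributes $\angles{w_1\mid w_2}_{H^1_\varepsilon}$, the product $\int\phi_\varepsilon\parens{u}uw_2$ contributes $\int\parens*{\Dif_u\phi_\varepsilon\brackets{w_1}uw_2+\phi_\varepsilon\parens{u}w_1 w_2}$ by the product rule, and the power nonlinearity contributes $-\int p\abs{u}^{p-1}w_1 w_2$, matching the claim. Continuity of $u\mapsto\Dif^2_u J_\varepsilon$ follows from the continuity of $u\mapsto\Dif_u\phi_\varepsilon$ guaranteed by the $C^1$-regularity in Proposition~\ref{prop:phi_epsilon}, together with the relevant Sobolev embeddings and Hölder estimates; hence $J_\varepsilon\in C^2$. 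Finally, for the equivalence I would expand $\angles{u\mid w}_{H^1_\varepsilon}=\int\parens*{\nabla u\cdot\nabla w+V_\varepsilon uw}$ inside $\Dif_u J_\varepsilon\brackets{w}=0$ and read off exactly the first weak equation of \eqref{QLSP'_epsilon} with $\phi=\phi_\varepsilon\parens{u}$, the second equation holding by the very definition of $\phi_\varepsilon$; conversely, any weak solution $\parens{u,\phi}$ must satisfy $\phi=\phi_\varepsilon\parens{u}$ by the uniqueness in Proposition~\ref{prop:phi_epsilon}, and density of $C_c^\infty$ in $H^1$ upgrades the test functions from $C_c^\infty$ to all of $H^1$, giving $\Dif_u J_\varepsilon=0$.
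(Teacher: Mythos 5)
Your proof is correct, and the computation at its heart checks out: with $P:=\varepsilon^{-2}\angles{\xi\mid\psi}_{\mathcal{D}^{1,2}}$ and $Q:=\beta\varepsilon^{-4}\int\abs{\nabla\xi}^2\nabla\xi\cdot\nabla\psi$, your two relations read $P+Q=\int u^2\psi$ and $P+3Q=2\int uw\,\xi$, whence $\Dif_u N\brackets{w}=\tfrac38\parens{P+Q}-\tfrac14 P+\tfrac34\int\xi uw=\tfrac18\parens{P+3Q}+\tfrac34\int\xi uw=\int\xi uw$, exactly as claimed. However, your route is genuinely different from the paper's. The paper introduces the two-variable functional $\mathcal{J}_\varepsilon\parens{u,\xi}$ on $H^1\times X$ whose critical points are the weak solutions, observes that its partial derivative in $\xi$ vanishes precisely when $\xi=\phi_\varepsilon\parens{u}$, and then shows --- using only the single identity $\varepsilon^{-2}\norm{\phi_\varepsilon\parens{u}}_{\mathcal{D}^{1,2}}^2+\beta\varepsilon^{-4}\norm{\phi_\varepsilon\parens{u}}_{\mathcal{D}^{1,4}}^4=\int\phi_\varepsilon\parens{u}u^2$ --- that $J_\varepsilon\parens{u}=\mathcal{J}_\varepsilon\parens{u,\phi_\varepsilon\parens{u}}$. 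The chain rule then kills the $\Dif_u\phi_\varepsilon$ contribution to the first derivative automatically, since it is paired with the vanishing $\xi$-partial; no linearized equation is needed at first order, and the otherwise mysterious coefficients $3/8$ and $-1/8$ are explained rather than merely verified. Your approach differentiates the given expression for $J_\varepsilon$ head-on and achieves the same cancellation by combining the defining equation of $\phi_\varepsilon$ tested against $\psi=\Dif_u\phi_\varepsilon\brackets{w}$ with its linearization tested against $\xi$ (the latter being the content of the paper's later Lemma~\ref{lem:Dif_phi_eps} specialized to $\zeta=\xi$). What your version buys is an explicit, self-contained verification of the coefficients and of the exact mechanism by which the quartic $\mathcal{D}^{1,4}$ terms cancel; what the paper's version buys is brevity, a conceptual explanation of the reduced functional's form, and a cleaner passage to the equivalence with weak solutions, since criticality of $\mathcal{J}_\varepsilon$ in both variables is the definition of a weak solution. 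Both arguments leave the $C^2$ regularity at the same level of detail (continuity of $u\mapsto\Dif_u\phi_\varepsilon$ from Proposition~\ref{prop:phi_epsilon} plus standard Nemytskii estimates for the power term), so there is no gap relative to the paper there.
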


It is worth noticing that arguing as in \cite{BK08}, a different proof gives actually that $J_{\varepsilon}\in C^{1}(H^{1})$,
and of course  $J_{\varepsilon}(0) = 0$.

\begin{proof}
It is clear that weak solutions of \eqref{QLSP'_epsilon} correspond to critical points of the functional
$\mathcal{J}_\varepsilon \colon H^1 \times X \to \real$
given by
\[
\mathcal{J}_\varepsilon \parens{u, \xi}
=
\frac{1}{2} \norm{u}_{H^1_\varepsilon}^2
+
\frac{1}{2} \int \xi u^2
-
\frac{1}{p + 1} \norm{u}_{L^{p + 1}}^{p + 1}
-
\frac{1}{2}
\parens*{
	\frac{1}{2 \varepsilon^2} \norm{\xi}_{\mathcal{D}^{1, 2}}^2
	+
	\frac{\beta}{4 \varepsilon^4} \norm{\xi}_{\mathcal{D}^{1, 4}}^4
}.
\]
It follows from Proposition \ref{prop:phi_epsilon} that
\[
\Dif_{\parens{u, \xi}} \mathcal{J}_\varepsilon \brackets{0, \cdot} = 0 \in X^*
\iff
\xi = \phi_\varepsilon \parens{u}.
\]
This leads to consider the map 
\begin{multline*}
\mathcal{J}_\varepsilon \parens{u, \phi_\varepsilon \parens{u}}
=
\frac{1}{2} \norm{u}_{H^1_\varepsilon}^2
+
\frac{1}{2} \int \phi_\varepsilon \parens{u} u^2
-
\frac{1}{p + 1} \norm{u}_{L^{p + 1}}^{p + 1}
\\
-
\frac{1}{2}
\parens*{
	\frac{1}{2 \varepsilon^2}
	\norm{\phi_\varepsilon \parens{u}}_{\mathcal{D}^{1, 2}}^2
	+
	\frac{\beta}{4 \varepsilon^4}
	\norm{\phi_\varepsilon \parens{u}}_{\mathcal{D}^{1, 4}}^4
}.
\end{multline*}
As
$
T_\varepsilon \circ \phi_\varepsilon \parens{u}
=
\iota^* \brackets{u^2}
$,
it follows that
\[
\frac{1}{\varepsilon^2} \norm{\phi_\varepsilon \parens{u}}_{\mathcal{D}^{1, 2}}^2
+
\frac{\beta}{\varepsilon^4} \norm{\phi_\varepsilon \parens{u}}_{\mathcal{D}^{1, 4}}^4
=
\int \phi_\varepsilon \parens{u} u^2
\]
and then we deduce that
$
J_\varepsilon \parens{u}
=
\mathcal{J}_\varepsilon \parens{u, \phi_\varepsilon \parens{u}}
$.

In view of the regularity of $\phi_\varepsilon$, we easily see  that $J_\varepsilon$ is of class $C^2$ 
with derivatives given above.
\end{proof}

\section{Lyapunov--Schmidt reduction} \label{sect:LS}

\subsection*{The manifold of pseudo-critical points}

We will look for weak solutions of \eqref{QLSP'_epsilon} obtained as perturbations of functions in a certain submanifold of $H^1$. The goal of this section is to introduce such a manifold and prove a crucial property of its elements.

The aforementioned submanifold of $H^1$ is given by
\[
Z_\varepsilon = \set*{
	U_{\varepsilon, z}
	:=
	\lambda_{\varepsilon z}^{2 / \parens{p - 1}}
	U \parens*{\lambda_{\varepsilon z}\parens{\cdot - z}}
	\mid
	z \in \real^3
}
\quad\text{ with }\quad
\lambda_{\varepsilon z} := V \parens{\varepsilon z}^{1 / 2},
\]
so that given $z \in \real^3$, $U_{\varepsilon, z}$ solves \eqref{P_lambda} with $\lambda = \lambda_{\varepsilon z}$.
Let us show how to use the structure of $Z_\varepsilon$ to induce a family of orthogonal decompositions of $H^1$. Given $z \in \real^3$, we define the \emph{tangent space} of $Z_\varepsilon$ at $U_{\varepsilon, z}$ as
\[
\Tan_{U_{\varepsilon, z}}Z_\varepsilon = \mathrm{span} \set*{
	\dot{U}_{\varepsilon, z, 1}, \dot{U}_{\varepsilon, z, 2}, \dot{U}_{\varepsilon, z, 3}
}
\quad \text{ where }\quad 
	\dot{U}_{\varepsilon, z, i} := \left.\frac{\dif}{\dif t} U_{\varepsilon, z+t e_i}\right|_{t=0},
\]
and we denote its $H^1$-orthogonal complement by
\begin{equation}\label{eq:We}
W_{\varepsilon, z} = \set*{w \in H^1 : \angles{w \mid u}_{H^1} = 0~\text{for every}~u \in \Tan_{U_{\varepsilon, z}}Z_\varepsilon},
\end{equation}
so that $H^1 = \Tan_{U_{\varepsilon, z}} Z_\varepsilon \oplus W_{\varepsilon, z}$.

At this point, it is important to highlight a few properties of $U_{\varepsilon, z}$ and its derivatives $\dot{U}_{\varepsilon, z, 1}$, $\dot{U}_{\varepsilon, z, 2}$, $\dot{U}_{\varepsilon, z, 3}$.
\begin{lem} \label{lem:tangent}
We have
\begin{equation} \label{eqn:aux:21}
1 \lesssim
\norm{U_{\varepsilon, z}}_{H^1},
\norm{\partial_i U_{\varepsilon, z}}_{H^1}
\lesssim
1,
\end{equation}
\begin{equation} \label{eqn:aux:22}
\norm{\dot{U}_{\varepsilon, z, i} + \partial_i U_{\varepsilon, z}}_{H^1}
\lesssim
\varepsilon,
\end{equation}
\begin{equation} \label{eqn:aux:23}
\abs*{\angles{\dot{U}_{\varepsilon, z, i} \mid \dot{U}_{\varepsilon, z, j}}_{H^1}}
\lesssim
\varepsilon
\quad\text{if}\quad
i \neq j
\end{equation}
for every $\parens{\varepsilon, z} \in \ooi{0, 1} \times \real^3$ and 
$i, j \in \set{1, 2, 3}$.
\end{lem}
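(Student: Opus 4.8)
The plan is to reduce each of \eqref{eqn:aux:21}, \eqref{eqn:aux:22}, \eqref{eqn:aux:23} to scaling identities for the \emph{fixed} ground state $U$, using two structural facts. First, by (\nameref{V_1}) and (\nameref{V_2}) the quantity $\lambda_{\varepsilon z} = V\parens{\varepsilon z}^{1/2}$ is bounded above and below by positive constants, uniformly in $\parens{\varepsilon, z} \in \ooi{0, 1} \times \real^3$. Second, by elliptic regularity one has $U \in H^2$, and by \cite[Proposition 4.1]{GNN81} the functions $U, \nabla U, \nabla^2 U$ decay exponentially, so that all weighted integrals below are finite. Granting this, for \eqref{eqn:aux:21} I would perform the change of variables $y = \lambda_{\varepsilon z}\parens{x - z}$. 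Since $U_{\varepsilon, z}\parens{x} = \lambda_{\varepsilon z}^{2/\parens{p-1}} U\parens{\lambda_{\varepsilon z}\parens{x - z}}$ and $\partial_i U_{\varepsilon, z}\parens{x} = \lambda_{\varepsilon z}^{2/\parens{p-1} + 1}\parens{\partial_i U}\parens{\lambda_{\varepsilon z}\parens{x - z}}$, both $\norm{U_{\varepsilon, z}}_{H^1}^2$ and $\norm{\partial_i U_{\varepsilon, z}}_{H^1}^2$ become explicit positive powers of $\lambda_{\varepsilon z}$ times the fixed finite positive constants $\int U^2, \int \abs{\nabla U}^2$ and $\int \parens{\partial_i U}^2, \int \abs{\nabla \partial_i U}^2$; the lower bounds use $\norm{\partial_i U}_{L^2} > 0$, which holds because $U$ is non-constant. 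Two-sided control of $\lambda_{\varepsilon z}$ then yields \eqref{eqn:aux:21}.

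For \eqref{eqn:aux:22}, I would differentiate $t \mapsto U_{\varepsilon, z + t e_i}$ at $t = 0$ by the chain rule, isolating its two sources of $t$-dependence: the translation $x - z - t e_i$ and the dilation $\lambda_{\varepsilon\parens{z + t e_i}}$. The translation produces exactly the leading term $-\partial_i U_{\varepsilon, z}$, while the dilation produces a remainder carrying the factor $\frac{\dif}{\dif t}\lambda_{\varepsilon\parens{z + t e_i}}\big|_{t = 0} = \varepsilon\, \partial_i V\parens{\varepsilon z} / \parens{2 \lambda_{\varepsilon z}}$, which is $O\parens{\varepsilon}$ by (\nameref{V_1}), (\nameref{V_2}). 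Consequently $\dot{U}_{\varepsilon, z, i} + \partial_i U_{\varepsilon, z}$ equals this $O\parens{\varepsilon}$ factor times a fixed combination of $U\parens{\lambda_{\varepsilon z}\parens{\cdot - z}}$ and the function $y \mapsto \nabla U\parens{y} \cdot y$ rescaled; a change of variables bounds its $H^1$ norm by that factor times finite constants, giving \eqref{eqn:aux:22}.

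Finally, for \eqref{eqn:aux:23} I would write $\dot{U}_{\varepsilon, z, i} = -\partial_i U_{\varepsilon, z} + R_i$ with $\norm{R_i}_{H^1} \lesssim \varepsilon$ by \eqref{eqn:aux:22}, and expand the inner product. The three cross terms are $O\parens{\varepsilon}$ by Cauchy--Schwarz together with \eqref{eqn:aux:21} and \eqref{eqn:aux:22}, so it remains to treat the main term $\angles{\partial_i U_{\varepsilon, z} \mid \partial_j U_{\varepsilon, z}}_{H^1}$. After the same change of variables this equals a power of $\lambda_{\varepsilon z}$ times the fixed integral $\int \parens{\nabla \partial_i U \cdot \nabla \partial_j U + \partial_i U\, \partial_j U}$, which vanishes for $i \neq j$: since $U$ is radial, $\partial_i U$ is odd and $\partial_j U$ is even under the reflection $x_i \mapsto -x_i$, so every summand of each integrand is odd in $x_i$ and integrates to zero. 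Hence $\angles{\partial_i U_{\varepsilon, z} \mid \partial_j U_{\varepsilon, z}}_{H^1} = 0$ and \eqref{eqn:aux:23} follows.

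The scaling computations themselves are routine. I expect the main obstacle to be the careful bookkeeping in the chain rule for $\dot{U}_{\varepsilon, z, i}$ -- correctly identifying $-\partial_i U_{\varepsilon, z}$ as the leading term and confirming that the entire remainder is proportional to the $O\parens{\varepsilon}$ dilation factor -- together with the finiteness of the weighted norm of $y \mapsto \nabla U\parens{y} \cdot y$, which hinges on the exponential decay of $U$ and its first two derivatives.
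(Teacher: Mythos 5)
Your proposal is correct and follows essentially the same route as the paper, which proves \eqref{eqn:aux:21} by the scaling computation with $\lambda_{\varepsilon z}$ controlled via (\nameref{V_1}), (\nameref{V_2}), obtains \eqref{eqn:aux:22} by differentiating $z \mapsto U_{\varepsilon, z}$ and isolating the $O(\varepsilon)$ dilation contribution (citing \cite[p.~123]{AM06} for details), and deduces \eqref{eqn:aux:23} from \eqref{eqn:aux:22} together with the oddness of $\partial_i U_{\varepsilon, z}$ in the $i$\textsuperscript{th} variable. Your write-up simply supplies the details that the paper leaves to the reference, including the correct parity check for the $\mathcal{D}^{1,2}$ part of the inner product and the regularity of $U$ needed for the weighted integrals.
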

\begin{proof}
The estimate \eqref{eqn:aux:21} follows from straightforward computations by taking (\nameref{V_1}), (\nameref{V_2}) into account. As for \eqref{eqn:aux:22}, it suffices to consider the $i$\textsuperscript{th} partial derivative of $z \mapsto U_{\varepsilon, z}$, Estimate \eqref{eqn:aux:21} and Hypotheses (\nameref{V_1}), (\nameref{V_2}) (for details, see \cite[p. 123]{AM06}). Finally, \eqref{eqn:aux:23} follows from \eqref{eqn:aux:22} because $\partial_i U_{\varepsilon, z}$ is odd in the $i$\textsuperscript{th} variable.
\end{proof}

The set $Z_\varepsilon$ is often called a manifold of \emph{pseudo-critical points} of $J_\varepsilon$ since we can bound the derivative of $J_\varepsilon$ at points in $Z_\varepsilon$ in term of $\nabla V \parens{\varepsilon \cdot}$. The following result formalizes this fact.

\begin{lem} \label{lem:pseudo_critical}
We have
\[
\norm{\nabla  J_\varepsilon(U_{\varepsilon, z}) }_{H^1}
\lesssim
\varepsilon \abs{\nabla V \parens{\varepsilon z}} + \varepsilon^2
\]
for every $\parens{\varepsilon, z} \in \ooi{0, 1} \times \real^3$.
\end{lem}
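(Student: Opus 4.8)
The plan is to exploit the fact that $U_{\varepsilon, z}$ solves \eqref{P_lambda} with $\lambda = \lambda_{\varepsilon z}$ in order to produce an exact cancellation inside $\Dif_{U_{\varepsilon, z}} J_\varepsilon$. Since the gradient is defined through the $H^1$-inner product, I would first write
\[
\norm{\nabla J_\varepsilon(U_{\varepsilon, z})}_{H^1}
=
\sup_{\norm{w}_{H^1} = 1} \Dif_{U_{\varepsilon, z}} J_\varepsilon \brackets{w}
\]
and then expand $\Dif_{U_{\varepsilon, z}} J_\varepsilon \brackets{w}$ with the formula from Proposition \ref{prop:variational}. Writing out $\angles{U_{\varepsilon, z} \mid w}_{H^1_\varepsilon}$ and using the weak formulation of \eqref{P_lambda} to replace $\int \nabla U_{\varepsilon, z} \cdot \nabla w$ by $\int U_{\varepsilon, z} \abs{U_{\varepsilon, z}}^{p - 1} w - \lambda_{\varepsilon z}^2 \int U_{\varepsilon, z} w$, the two terms carrying $\abs{U_{\varepsilon, z}}^{p - 1}$ cancel; since $\lambda_{\varepsilon z}^2 = V(\varepsilon z)$, this leaves
\[
\Dif_{U_{\varepsilon, z}} J_\varepsilon \brackets{w}
=
\int \parens{V_\varepsilon - V(\varepsilon z)} U_{\varepsilon, z} w
+
\int \phi_\varepsilon(U_{\varepsilon, z}) U_{\varepsilon, z} w.
\]
It then remains to bound each integral by $\parens{\varepsilon \abs{\nabla V(\varepsilon z)} + \varepsilon^2} \norm{w}_{H^1}$.

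For the nonlocal term, I would apply H\"older with $\phi_\varepsilon(U_{\varepsilon, z}) \in L^6$ and $U_{\varepsilon, z} w \in L^{6/5}$. The embedding $\mathcal{D}^{1, 2} \hookrightarrow L^6$, Lemma \ref{lem:bound_phi_eps}, a further H\"older step splitting $L^{6/5}$ as $L^{12/5} \times L^{12/5}$, and the embedding $H^1 \hookrightarrow L^{12/5}$ together yield a bound $\lesssim \varepsilon^2 \norm{U_{\varepsilon, z}}_{L^{12/5}}^3 \norm{w}_{H^1}$. A scaling computation as in Lemma \ref{lem:tangent}, using that $\lambda_{\varepsilon z}$ is bounded above and below by (\nameref{V_1}), (\nameref{V_2}), gives $\norm{U_{\varepsilon, z}}_{L^{12/5}} \lesssim 1$, so this contributes the $\varepsilon^2$ part of the estimate.

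For the potential term, the idea is a second-order Taylor expansion of $V$ about $\varepsilon z$. Writing $V_\varepsilon(x) - V(\varepsilon z) = \varepsilon \nabla V(\varepsilon z) \cdot (x - z) + R(x)$ with $\abs{R(x)} \lesssim \varepsilon^2 \abs{x - z}^2$ (which uses $\norm{V}_{C^2} < \infty$), I would bound the linear contribution by $\varepsilon \abs{\nabla V(\varepsilon z)} \, \norm{\abs{\cdot - z} U_{\varepsilon, z}}_{L^2} \norm{w}_{L^2}$ and the remainder by $\varepsilon^2 \norm{\abs{\cdot - z}^2 U_{\varepsilon, z}}_{L^2} \norm{w}_{L^2}$, both via Cauchy--Schwarz. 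The weighted norms $\norm{\abs{\cdot - z}^k U_{\varepsilon, z}}_{L^2}$ for $k = 1, 2$ are controlled uniformly in $(\varepsilon, z)$ by the same scaling change of variables, the finiteness of $\int \abs{y}^{2k} U(y)^2$ following from the exponential decay of $U$ recorded after \eqref{P_lambda}. Combining the three estimates and taking the supremum over $\norm{w}_{H^1} = 1$ gives the claim.

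The crux of the argument --- and its only genuinely delicate point --- is the exact cancellation of the $\abs{U_{\varepsilon, z}}^{p - 1}$ contributions, which hinges on $U_{\varepsilon, z}$ being an honest solution of the $\varepsilon$-independent profile equation \eqref{P_lambda}; everything afterward is bookkeeping of weighted $L^2$ estimates, whose uniformity in $z$ rests on the translation structure of $Z_\varepsilon$, the bounds on $\lambda_{\varepsilon z}$, and the exponential decay of $U$.
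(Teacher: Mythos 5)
Your proposal is correct and follows essentially the same route as the paper: the cancellation you obtain from the weak formulation of \eqref{P_lambda} is exactly the paper's observation that $\Dif_{U_{\varepsilon,z}}\overline{I}_{\lambda_{\varepsilon z}}=0$, and the two remaining terms are estimated in the same way (second-order Taylor expansion of $V$ with weighted $L^2$ norms of $U$ controlled by exponential decay, and H\"older plus Lemma \ref{lem:bound_phi_eps} for the nonlocal term, differing only in the choice of exponents). No gaps.
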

\begin{proof}
Recalling the expression of  $\overline I_{\lambda}$ in \eqref{eq:Ibar} and Proposition \ref{prop:variational}, it is
$$J_{\varepsilon}(u) = \overline{I}_{\lambda_{\varepsilon z}}(u)+\frac12\int \left( V_{\varepsilon} - V_{\varepsilon}(z)\right)u^{2}
+ \frac38\int\phi_{\varepsilon}(u)u^{2} -\frac{1}{8\varepsilon^{2}}\| \phi_{\varepsilon}(u)\|^{2}_{\mathcal D^{1,2}},$$
and we have 
\[
\Dif_{U_{\varepsilon, z}} J_\varepsilon \brackets{w}
=
\underbrace{
	\Dif_{U_{\varepsilon, z}} \overline{I}_{\lambda_{\varepsilon z}} \brackets{w}
}_{=0}
+
\int \left(V_\varepsilon - V_\varepsilon \parens{z}\right) U_{\varepsilon, z} w
+
\int \phi_\varepsilon \parens{U_{\varepsilon, z}} U_{\varepsilon, z} w.
\]
On one hand,
\[
\abs*{V_\varepsilon \parens{x} - V_\varepsilon \parens{z}}
\lesssim
\varepsilon \abs{\nabla V \parens{\varepsilon z}} \abs{x - z}
+
\varepsilon^2 \abs{x - z}^2
\]
for every $x \in \real^3$ due to (\nameref{V_1}). It then follows from (\nameref{V_1}), (\nameref{V_2}) and the exponential decay of $U$ at infinity that
\begin{eqnarray*}
\abs*{\int \Big(
		( V_\varepsilon - V_\varepsilon \parens{z}) U_{\varepsilon, z} w
\Big) }
&\lesssim&
\varepsilon \abs{\nabla V \parens{\varepsilon z}}
\frac{\lambda_{\varepsilon z}^{2 / \parens{p - 1}}}{\lambda_{\varepsilon z}^{5 / 2}}
\Big(
	\int \abs{x}^2 U \parens{x}	^2 \dif x
\Big)^{1 / 2}
\norm{w}_{H^1}
\\
&+&
\varepsilon^2
\frac{\lambda_{\varepsilon z}^{2 / \parens{p - 1}}}{\lambda_{\varepsilon z}^{7 / 2}}
\Big(
	\int \abs{x}^4 U \parens{x}^2 \dif x
\Big)^{1 / 2} \norm{w}_{H^1}\\
&\lesssim&
(\varepsilon \abs{\nabla V \parens{\varepsilon z}} + \varepsilon^2)
\norm{w}_{H^1}.
\end{eqnarray*}
On the other hand, we analogously have, using Lemma \ref{lem:bound_phi_eps}, that	
\begin{eqnarray*}
\int \abs*{\phi_\varepsilon \parens{U_{\varepsilon, z}} U_{\varepsilon, z} w}
&\leq&
\frac{\lambda_{\varepsilon z}^{2 / \parens{p - 1}}}{\lambda_{\varepsilon z}^2}
\norm{\phi_\varepsilon \parens{U_{\varepsilon, z}}}_{L^6}
\norm{U}_{L^{3 / 2}} \norm{w}_{L^6}\\
&\lesssim&
\norm{\phi_\varepsilon \parens{U_{\varepsilon, z}}}_{\mathcal{D}^{1, 2}}
\norm{w}_{H^1}\\
&\lesssim&\varepsilon^{2}\norm{U_{\varepsilon, z}}_{L^{12/5}}^{2} \norm{w}_{H^1}.
\end{eqnarray*}
It follows from (\nameref{V_1}), (\nameref{V_2}) that $Z_\varepsilon$ is a bounded subset of $L^{12 / 5} \setminus \set{0}$, so the result follows.
\end{proof}

\subsection*{An equivalent problem and an \emph{ansatz} for the solutions}

In view of the family of orthogonal decompositions of $H^1$ in the previous section, we can rewrite the critical point equation
\[
\nabla J_\varepsilon(u) =0, 
\quad u \in H^1
\]
as a system of two equations
\begin{numcases}
\\
\Pi_{\varepsilon, z} \brackets{\nabla   J_\varepsilon(u) }
=
0,
\label{eqn:aux}
\\
\parens{\id_{H^1} - \Pi_{\varepsilon, z}} \brackets{\nabla   J_\varepsilon (u)}
=
0,
\label{eqn:bif}
\\
u\in H^1,
\nonumber
\end{numcases}
where, recall \eqref{eq:We},  $\Pi_{\varepsilon, z} \colon H^1 \to W_{\varepsilon, z}$ is the $H^1$-orthogonal projection and we respectively name \eqref{eqn:aux}, \eqref{eqn:bif} the \emph{auxiliary} and \emph{bifurcation} equations. In this situation, the strategy for our proof will consist in looking for weak solutions to \eqref{QLSP'_epsilon} according to the \emph{ansatz}
\[
u = U_{\varepsilon, z} + w, \quad \text{where}\quad
w \in W_{\varepsilon, z} \ \  \text{and}\ \ 
\parens{\varepsilon, z} \in \ooi{0, 1} \times \real^3.
\]

\subsection*{Solving the auxiliary equation}

In this section, we proceed similarly as Ambrosetti and Malchiodi in \cite[Section 8.4]{AM06} to solve the auxiliary equation. More precisely, our main goal is to prove the lemma that follows.
\begin{lem} \label{lem:soln_aux}
There exists $\varepsilon_0 \in \ooi{0, 1}$ such that given $\varepsilon \in \ooi{0, \varepsilon_0}$, we have an application of class $C^1$,
\begin{equation} \label{eqn:map_w}
\real^3 \ni z
\mapsto
w_{\varepsilon, z} \in W_{\varepsilon, z} \subset H^1,
\end{equation}
such that
$
\Pi_{\varepsilon, z} \brackets{
	\nabla J_\varepsilon \parens{U_{\varepsilon, z} + w_{\varepsilon, z}}
} = 0
$
for every $z \in \real^3$. 
Moreover,
\[
\norm{w_{\varepsilon, z}}_{H^1}
\lesssim
\varepsilon \abs{\nabla V \parens{\varepsilon z}} + \varepsilon^2
\quad\text{and}\quad
\norm{\dot{w}_{\varepsilon, z, i}}_{H^1}
\lesssim
\left(\varepsilon \abs{\nabla V \parens{\varepsilon z}} + \varepsilon^2\right)^\mu
\]
for every $i \in \set{1, 2, 3}$ and
$\parens{\varepsilon, z} \in \ooi{0, \varepsilon_0} \times \real^3$,
where $\mu := \min \parens{1, p - 1} > 0$.
\end{lem}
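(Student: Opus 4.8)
The plan is to recast the auxiliary equation as a fixed-point problem on the fixed space $W_{\varepsilon, z}$ and to solve it by a contraction argument, deducing the $C^1$-dependence on $z$ from the Implicit Function Theorem. Setting $S_{\varepsilon, z} \parens{w} := \Pi_{\varepsilon, z} \brackets{\nabla J_\varepsilon \parens{U_{\varepsilon, z} + w}}$, I would Taylor-expand $\nabla J_\varepsilon$ about $U_{\varepsilon, z}$ as
\[
\nabla J_\varepsilon \parens{U_{\varepsilon, z} + w}
=
\nabla J_\varepsilon \parens{U_{\varepsilon, z}}
+
L_{\varepsilon, z} \brackets{w}
+
R_{\varepsilon, z} \parens{w},
\]
where $L_{\varepsilon, z} \in \End \parens{H^1, H^1}$ is the $H^1$-gradient representative of $\Dif^2_{U_{\varepsilon, z}} J_\varepsilon$ and $R_{\varepsilon, z}$ collects the superlinear remainder. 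Composing with $\Pi_{\varepsilon, z}$ and restricting to $W_{\varepsilon, z}$, the equation $S_{\varepsilon, z} \parens{w} = 0$ becomes $A_{\varepsilon, z} \brackets{w} = - \Pi_{\varepsilon, z} \brackets{\nabla J_\varepsilon \parens{U_{\varepsilon, z}} + R_{\varepsilon, z} \parens{w}}$, with $A_{\varepsilon, z} := \left. \Pi_{\varepsilon, z} L_{\varepsilon, z} \right|_{W_{\varepsilon, z}} \colon W_{\varepsilon, z} \to W_{\varepsilon, z}$. Provided $A_{\varepsilon, z}$ is boundedly invertible uniformly in $\parens{\varepsilon, z}$, this is equivalent to the fixed-point identity $w = G_{\varepsilon, z} \parens{w} := - A_{\varepsilon, z}^{-1} \Pi_{\varepsilon, z} \brackets{\nabla J_\varepsilon \parens{U_{\varepsilon, z}} + R_{\varepsilon, z} \parens{w}}$.

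The crucial step, and the main obstacle, is to prove that $A_{\varepsilon, z}$ is invertible on $W_{\varepsilon, z}$ with $\norm{A_{\varepsilon, z}^{-1}}$ bounded uniformly for $\parens{\varepsilon, z} \in \ooi{0, \varepsilon_0} \times \real^3$. The idea is to compare $L_{\varepsilon, z}$ with the Hessian of the limit functional $\overline{I}_{\lambda_{\varepsilon z}}$ at $U_{\varepsilon, z}$: by the classical non-degeneracy of the ground state $U$, the kernel of $\Dif^2_{U_{\varepsilon, z}} \overline{I}_{\lambda_{\varepsilon z}}$ is exactly $\mathrm{span} \set{\partial_i U_{\varepsilon, z}}$ and this operator is uniformly coercive on the $H^1$-orthogonal complement of that span. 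Since Lemma \ref{lem:tangent} gives $\norm{\dot{U}_{\varepsilon, z, i} + \partial_i U_{\varepsilon, z}}_{H^1} \lesssim \varepsilon$, the space $W_{\varepsilon, z}$ and the complement of $\mathrm{span} \set{\partial_i U_{\varepsilon, z}}$ nearly coincide, so the coercivity transfers. It then remains to check that the extra terms in $L_{\varepsilon, z}$, namely those coming from the potential difference $V_\varepsilon - V_\varepsilon \parens{z}$ and from the nonlocal quasilinear contributions involving $\phi_\varepsilon \parens{U_{\varepsilon, z}}$ and $\Dif_{U_{\varepsilon, z}} \phi_\varepsilon$, are small perturbations in operator norm; this follows from (\nameref{V_1}), (\nameref{V_2}), the exponential decay of $U$, and Lemma \ref{lem:bound_phi_eps} for the Poisson term. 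Combining these yields the uniform coercivity of $A_{\varepsilon, z}$, hence the bound on its inverse.

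Granting the invertibility, I would run the contraction argument for $G_{\varepsilon, z}$ on the closed ball of radius $\rho_{\varepsilon, z} := C \parens{\varepsilon \abs{\nabla V \parens{\varepsilon z}} + \varepsilon^2}$ in $W_{\varepsilon, z}$. Lemma \ref{lem:pseudo_critical} controls the constant term $\norm{\nabla J_\varepsilon \parens{U_{\varepsilon, z}}}_{H^1} \lesssim \varepsilon \abs{\nabla V \parens{\varepsilon z}} + \varepsilon^2$, while the $C^2$-regularity of $J_\varepsilon$ furnishes $\norm{R_{\varepsilon, z} \parens{w}}_{H^1} = o \parens{\norm{w}_{H^1}}$ together with a local Lipschitz estimate on $R_{\varepsilon, z}$; shrinking $\varepsilon_0$ then makes $G_{\varepsilon, z}$ map the ball into itself and contract. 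The unique fixed point $w_{\varepsilon, z}$ belongs to that ball, which is precisely the first estimate $\norm{w_{\varepsilon, z}}_{H^1} \lesssim \varepsilon \abs{\nabla V \parens{\varepsilon z}} + \varepsilon^2$. For the $C^1$-dependence on $z$ I would invoke the Implicit Function Theorem applied to $S_{\varepsilon, z}$, whose partial derivative in $w$ at the solution is $A_{\varepsilon, z}$; since the decomposition $H^1 = \Tan_{U_{\varepsilon, z}} Z_\varepsilon \oplus W_{\varepsilon, z}$ itself varies with $z$, it is cleaner to transport the equation onto a fixed reference space by a $z$-dependent isomorphism before applying the theorem.

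Finally, for the derivative estimate I would differentiate the identity $\Pi_{\varepsilon, z} \brackets{\nabla J_\varepsilon \parens{U_{\varepsilon, z} + w_{\varepsilon, z}}} = 0$ in the direction $e_i$ and solve the resulting linear equation for $\dot{w}_{\varepsilon, z, i}$ using $A_{\varepsilon, z}^{-1}$. The right-hand side gathers the $z$-derivatives of $\Pi_{\varepsilon, z}$, of $U_{\varepsilon, z}$, and of the nonlinear term evaluated at $U_{\varepsilon, z} + w_{\varepsilon, z}$. The decisive contribution is $\Dif^2_{U_{\varepsilon, z} + w_{\varepsilon, z}} J_\varepsilon \brackets{\dot{U}_{\varepsilon, z, i}, \cdot}$: since $\dot{U}_{\varepsilon, z, i} \approx - \partial_i U_{\varepsilon, z}$ lies almost in the kernel of the limit Hessian, its image under $\Dif^2_{U_{\varepsilon, z}} J_\varepsilon$ is small, and the only genuine obstruction comes from replacing $U_{\varepsilon, z}$ by $U_{\varepsilon, z} + w_{\varepsilon, z}$. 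Here the exponent $\mu = \min \parens{1, p - 1}$ enters: the second derivative of the nonlinearity $u \abs{u}^{p - 1}$, namely $p \abs{u}^{p - 1}$, is Lipschitz only for $p \geq 2$ and merely $\parens{p - 1}$-Hölder continuous for $1 < p < 2$, so the difference $\Dif^2_{U_{\varepsilon, z} + w_{\varepsilon, z}} J_\varepsilon - \Dif^2_{U_{\varepsilon, z}} J_\varepsilon$ is controlled only by $\norm{w_{\varepsilon, z}}_{H^1}^\mu$. Feeding in the first estimate yields $\norm{\dot{w}_{\varepsilon, z, i}}_{H^1} \lesssim \parens{\varepsilon \abs{\nabla V \parens{\varepsilon z}} + \varepsilon^2}^\mu$, as claimed.
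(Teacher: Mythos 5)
Your overall architecture coincides with the paper's: invert the projected Hessian on $W_{\varepsilon, z}$, run a contraction on a ball of radius $\sim \varepsilon \abs{\nabla V \parens{\varepsilon z}} + \varepsilon^2$ using the pseudo-criticality bound of Lemma \ref{lem:pseudo_critical}, obtain $C^1$-dependence from the Implicit Function Theorem (the paper handles the $z$-dependence of $W_{\varepsilon, z}$ by adjoining Lagrange multipliers $\alpha \in \real^3$ and working with a map $\mathcal{H} \parens{\varepsilon, z, w, \alpha}$ on the fixed space $H^1 \times \real^3$, rather than transporting to a reference space, but both devices are standard), and derive the bound on $\dot{w}_{\varepsilon, z, i}$ by differentiating the implicit equation; your identification of $\mu = \min \parens{1, p-1}$ with the H\"older regularity of $u \mapsto p \abs{u}^{p-1}$ is exactly the mechanism behind \eqref{eqn:aux:9} and Lemma \ref{lem:estimate_D^2J_eps}.

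There is, however, a genuine error in the step you yourself single out as crucial. You claim that $\Dif^2_{U_{\varepsilon, z}} \overline{I}_{\lambda_{\varepsilon z}}$ is \emph{uniformly coercive} on the $H^1$-orthogonal complement of its kernel $\mathrm{span} \set{\partial_i U_{\varepsilon, z}}$. This is false: testing the Hessian on $U_{\varepsilon, z}$ itself (which is $H^1$-orthogonal to each $\partial_i U_{\varepsilon, z}$ by parity) and using that $U_{\varepsilon, z}$ is a critical point of $\overline{I}_{\lambda_{\varepsilon z}}$ gives
\[
\Dif^2_{U_{\varepsilon, z}} \overline{I}_{\lambda_{\varepsilon z}} \brackets{U_{\varepsilon, z}, U_{\varepsilon, z}}
=
\norm{U_{\varepsilon, z}}_{L^{p+1}}^{p+1} - p \norm{U_{\varepsilon, z}}_{L^{p+1}}^{p+1}
=
\parens{1 - p} \norm{U_{\varepsilon, z}}_{L^{p+1}}^{p+1}
< 0,
\]
so the operator is indefinite on the orthogonal complement of its kernel, not coercive there. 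The bounded invertibility you need is still true, but it must be argued by splitting off the one-dimensional subspace $\mathrm{span} \set{\Pi_{\varepsilon, z} \brackets{U_{\varepsilon, z}}}$, on which the projected Hessian acts approximately as $- \parens{p - 1} \id$ (this is estimate \eqref{eqn:aux:19} and Lemma \ref{lem:bound_inverse_L}), and invoking coercivity only on the smaller space $\parens{\mathrm{span} \set{U_{\varepsilon, z}} \oplus \Tan_{U_{\varepsilon, z}} Z_\varepsilon}^\perp$ (Lemma \ref{lem:coercive}, resting on \cite[Lemma 8.9]{AM06}). As written, your justification of the invertibility would not go through, even though the conclusion you draw from it is correct; the remainder of your argument is sound once this point is repaired.
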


Let us develop the several preliminaries needed to prove Lemma \ref{lem:soln_aux}. The next result follows by differentiating
\[
L^{12 / 5} \setminus \set{0} \ni u
\mapsto
T_\varepsilon \circ \phi_\varepsilon \parens{u}
=
\iota^* \brackets{u^2}
\in
X^*
\]
and considering the case
$
\parens{\xi, \chi}
=
\parens{\phi_\varepsilon \parens{u}, \Dif_u \phi_\varepsilon \brackets{w}}
$
in \eqref{eqn:derivative_of_T_epsilon}.
\begin{lem} \label{lem:Dif_phi_eps}
Given $\parens{u, w, \zeta} \in \parens{L^{12 / 5} \setminus \set{0}} \times L^{12 / 5} \times X$, it is
\begin{multline*}
\frac{1}{\varepsilon^2}
\angles{\Dif_u \phi_\varepsilon \brackets{w} \mid \zeta}_{\mathcal{D}^{1, 2}}
+
2\frac{\beta}{\varepsilon^4}
\int 
		\left(
			\nabla \phi_\varepsilon \parens{u}
			\cdot
			\nabla \parens{\Dif_u \phi_\varepsilon \brackets{w}}
		\right)
\left(
			\nabla \phi_\varepsilon \parens{u} \cdot \nabla \zeta\right)
+
\\
+
\frac{\beta}{\varepsilon^4}
\int 
	\abs{\nabla \phi_\varepsilon \parens{u}}^2
	\nabla \parens{\Dif_u \phi_\varepsilon \brackets{w}}
	\cdot
	\nabla \zeta
=
2 \int u w \zeta.
\end{multline*}
\end{lem}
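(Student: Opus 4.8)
The plan is to differentiate the defining identity
\[
T_\varepsilon \circ \phi_\varepsilon \parens{u} = \iota^* \brackets{u^2},
\]
which holds for every $u \in L^{12 / 5} \setminus \set{0}$ by Proposition \ref{prop:phi_epsilon}. I would regard both sides as $C^1$ maps of the variable $u$ taking values in $X^*$, differentiate them in an arbitrary direction $w \in L^{12 / 5}$, and finally evaluate the resulting equality in $X^*$ against an arbitrary $\zeta \in X$.

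First I would treat the left-hand side. Since $\phi_\varepsilon$ is of class $C^1$ on $L^{12 / 5} \setminus \set{0}$ by Proposition \ref{prop:phi_epsilon} and $T_\varepsilon$ is of class $C^1$ by Lemma \ref{lem:regularity_T_eps}, the composition $T_\varepsilon \circ \phi_\varepsilon$ is of class $C^1$, and the chain rule yields
\[
\Dif_u \parens{T_\varepsilon \circ \phi_\varepsilon} \brackets{w}
=
\Dif_{\phi_\varepsilon \parens{u}} T_\varepsilon \brackets*{
	\Dif_u \phi_\varepsilon \brackets{w}
}
\in X^*.
\]
Substituting $\parens{\xi, \chi} = \parens{\phi_\varepsilon \parens{u}, \Dif_u \phi_\varepsilon \brackets{w}}$ into the formula \eqref{eqn:derivative_of_T_epsilon} for $\Dif_\xi T_\varepsilon$ and pairing the result against $\zeta$ reproduces precisely the left-hand side of the asserted identity.

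Next I would handle the right-hand side. The map $L^{12 / 5} \ni u \mapsto u^2 \in L^{6 / 5}$ is of class $C^1$ with derivative $w \mapsto 2 u w$; indeed, the remainder $\parens{u + w}^2 - u^2 - 2 u w = w^2$ satisfies $\norm{w^2}_{L^{6 / 5}} = \norm{w}_{L^{12 / 5}}^2$, and the continuity of $u \mapsto \parens{w \mapsto 2 u w}$ is a direct consequence of H\"older's inequality. Because $\iota^*$ is bounded and linear, this gives $\Dif_u \parens{\iota^* \brackets{u^2}} \brackets{w} = \iota^* \brackets{2 u w}$, so that pairing against $\zeta$ and using the identification of $L^{6 / 5}$ with $\parens{L^6}^*$ produces $2 \int u w \zeta$. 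Equating the two sides for every $\zeta \in X$ then gives the statement.

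The computation is essentially bookkeeping once the differentiability is in place, so I do not expect a genuine obstacle. The only point deserving attention is that differentiating $\phi_\varepsilon$ requires staying in $X^* \setminus \set{0}$, where $\Phi_\varepsilon$ is of class $C^1$ by Lemma \ref{lem:Phi_epsilon_C1}; this is ensured by the hypothesis $u \neq 0$, since then $\iota^* \brackets{u^2} \neq 0$.
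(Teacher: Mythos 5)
Your proposal is correct and follows exactly the paper's argument: the lemma is obtained by differentiating the identity $T_\varepsilon \circ \phi_\varepsilon \parens{u} = \iota^* \brackets{u^2}$ via the chain rule and substituting $\parens{\xi, \chi} = \parens{\phi_\varepsilon \parens{u}, \Dif_u \phi_\varepsilon \brackets{w}}$ into \eqref{eqn:derivative_of_T_epsilon}. Your additional remarks on the differentiability of $u \mapsto u^2$ and on why $u \neq 0$ keeps you in the region where $\Phi_\varepsilon$ is $C^1$ are accurate and only make explicit what the paper leaves implicit.
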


By considering the case $\zeta = \Dif_u \phi_\varepsilon \brackets{w}$ in the previous lemma, we obtain the following estimate, reminiscent of Lemma \ref{lem:bound_phi_eps}.
\begin{cor} \label{cor:estimate_Dif_phi_eps}
We have
\[
\norm*{\Dif_u \phi_\varepsilon}_{\End \parens{L^{12 / 5}, \mathcal{D}^{1, 2}}} \lesssim \varepsilon^2 \norm{u}_{L^{12 / 5}}
\]
for every
$
\parens{\varepsilon, u}
\in
\ooi{0, 1} \times \parens{L^{12 / 5} \setminus \set {0}}
$.
\end{cor}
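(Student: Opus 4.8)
\emph{The plan} is to apply Lemma \ref{lem:Dif_phi_eps} with the special choice $\zeta = \Dif_u \phi_\varepsilon \brackets{w}$ suggested just above the statement. Writing $\psi := \Dif_u \phi_\varepsilon \brackets{w}$ for brevity, the left-hand side of the identity in Lemma \ref{lem:Dif_phi_eps} becomes
\[
\frac{1}{\varepsilon^2} \norm{\psi}_{\mathcal{D}^{1, 2}}^2
+
2 \frac{\beta}{\varepsilon^4} \int \parens*{\nabla \phi_\varepsilon \parens{u} \cdot \nabla \psi}^2
+
\frac{\beta}{\varepsilon^4} \int \abs{\nabla \phi_\varepsilon \parens{u}}^2 \abs{\nabla \psi}^2,
\]
and the crucial observation is that the two quasilinear terms carrying the factor $\beta$ are integrals of nonnegative integrands. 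Discarding them can only decrease the left-hand side, so the identity yields the one-sided bound
\[
\frac{1}{\varepsilon^2} \norm{\psi}_{\mathcal{D}^{1, 2}}^2
\leq
2 \int u w \psi.
\]

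It then remains to estimate the right-hand side by elementary means. Applying H\"older's inequality with exponents $6/5$ and $6$, followed by a further splitting of $\norm{u w}_{L^{6/5}}$ with exponents $12/5$ and $12/5$ (which is admissible since $\tfrac{5}{12} + \tfrac{5}{12} = \tfrac{5}{6}$), we obtain
\[
\int u w \psi
\leq
\norm{u w}_{L^{6/5}} \norm{\psi}_{L^6}
\leq
\norm{u}_{L^{12/5}} \norm{w}_{L^{12/5}} \norm{\psi}_{L^6},
\]
and the Sobolev embedding $\mathcal{D}^{1, 2} \hookrightarrow L^6$ gives $\norm{\psi}_{L^6} \lesssim \norm{\psi}_{\mathcal{D}^{1, 2}}$. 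Combining this with the previous display and cancelling one factor of $\norm{\psi}_{\mathcal{D}^{1, 2}}$ — the case $\psi = 0$ being trivial — produces
\[
\norm{\Dif_u \phi_\varepsilon \brackets{w}}_{\mathcal{D}^{1, 2}}
\lesssim
\varepsilon^2 \norm{u}_{L^{12/5}} \norm{w}_{L^{12/5}}.
\]

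Finally, taking the supremum over all $w$ in the unit ball of $L^{12/5}$ yields the claimed operator-norm estimate for $\Dif_u \phi_\varepsilon \in \End \parens{L^{12/5}, \mathcal{D}^{1, 2}}$. The argument is entirely routine once the substitution $\zeta = \Dif_u \phi_\varepsilon \brackets{w}$ is made; I do not expect any genuine obstacle. The sole conceptual step — and the one worth emphasising — is the sign observation that lets us drop the two $\beta$-terms, which is precisely the quasilinear analogue of the mechanism behind Lemma \ref{lem:bound_phi_eps}.
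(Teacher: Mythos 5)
Your argument is correct and is exactly the route the paper intends: the paper's entire ``proof'' is the one-line instruction to take $\zeta = \Dif_u \phi_\varepsilon \brackets{w}$ in Lemma \ref{lem:Dif_phi_eps}, and you have simply filled in the details (discarding the two nonnegative $\beta$-terms, then H\"older with exponents $12/5$, $12/5$, $6$ and the Sobolev embedding). No discrepancy to report.
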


Now, we want to show that if $\varepsilon > 0$ is sufficiently small, then $\Dif^{2}_{U_{\varepsilon, z}} J_\varepsilon$ is coercive on a certain subspace of $H^1$. First we introduce the functional $I_\varepsilon \colon H^1 \to \real$ as given by
\begin{equation*}
I_\varepsilon \parens{u}
=
\frac{1}{2} \norm{u}_{H^1_\varepsilon}^2
-
\frac{1}{p + 1} \norm{u}_{L^{p + 1}}^{p + 1},
\end{equation*}
whose critical points  give the weak solutions of $-\Delta u +V_{\varepsilon} u = u|u|^{p-1}$, 
and then \eqref{NLS_epsilon}.

\begin{lem}\label{lem:coercive}
There exists $\varepsilon_0 \in \ooi{0, 1}$ such that
\[
\Dif^2_{U_{\varepsilon, z}} J_\varepsilon \brackets{u, u}
\gtrsim
\norm{u}_{H^1}^2
\]
for every
$\parens{\varepsilon, z} \in \ooi{0, \varepsilon_0} \times \real^3$
and
$
	u \in \parens{
		\mathrm{span}\set{U_{\varepsilon, z}}
		\oplus
		\Tan_{U_{\varepsilon, z}} \mathcal{Z}_\varepsilon
	}^\perp
$.
\end{lem}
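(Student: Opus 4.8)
The plan is to isolate the Schr\"odinger part of the Hessian from the Poisson contribution. Starting from the second derivative computed in Proposition \ref{prop:variational} and the definition of $I_\varepsilon$, one has
\[
\Dif^2_{U_{\varepsilon, z}} J_\varepsilon \brackets{w, w}
=
\Dif^2_{U_{\varepsilon, z}} I_\varepsilon \brackets{w, w}
+
\int \Dif_{U_{\varepsilon, z}} \phi_\varepsilon \brackets{w}\, U_{\varepsilon, z}\, w
+
\int \phi_\varepsilon \parens{U_{\varepsilon, z}} w^2,
\]
since the remaining terms $\norm{w}_{H^1_\varepsilon}^2 - p\int U_{\varepsilon,z}^{p-1} w^2$ are exactly $\Dif^2_{U_{\varepsilon,z}} I_\varepsilon \brackets{w,w}$. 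The idea is then to show that the two Poisson integrals form a perturbation of order $\varepsilon^2$, so that coercivity is inherited from the purely Schr\"odinger operator $\Dif^2_{U_{\varepsilon,z}} I_\varepsilon$.

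For the perturbative terms I would use H\"older together with the embeddings $\mathcal{D}^{1,2} \hookrightarrow L^6$ and $H^1 \hookrightarrow L^{12/5}$. For the second integral,
$\int \phi_\varepsilon(U_{\varepsilon,z}) w^2 \leq \norm{\phi_\varepsilon(U_{\varepsilon,z})}_{L^6}\norm{w}_{L^{12/5}}^2 \lesssim \varepsilon^2 \norm{U_{\varepsilon,z}}_{L^{12/5}}^2 \norm{w}_{H^1}^2$
by Lemma \ref{lem:bound_phi_eps}; for the first, writing $\norm{U_{\varepsilon,z} w}_{L^{6/5}} \leq \norm{U_{\varepsilon,z}}_{L^{12/5}}\norm{w}_{L^{12/5}}$ and invoking Corollary \ref{cor:estimate_Dif_phi_eps} gives the bound $\varepsilon^2 \norm{U_{\varepsilon,z}}_{L^{12/5}}^2 \norm{w}_{H^1}^2$. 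Since $Z_\varepsilon$ is bounded in $L^{12/5}$ (as already used in the proof of Lemma \ref{lem:pseudo_critical}), both integrals are $\lesssim \varepsilon^2 \norm{w}_{H^1}^2$ uniformly in $\parens{\varepsilon, z} \in \ooi{0,1}\times\real^3$.

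The heart of the matter, and the main obstacle, is the uniform coercivity
$\Dif^2_{U_{\varepsilon,z}} I_\varepsilon \brackets{w,w} \gtrsim \norm{w}_{H^1}^2$
on $\parens{\mathrm{span}\set{U_{\varepsilon,z}}\oplus \Tan_{U_{\varepsilon,z}} Z_\varepsilon}^\perp$, uniformly for small $\varepsilon$ and for $z$ ranging over all of $\real^3$. I would prove it by contradiction: assuming $\varepsilon_n \to 0$, $z_n$ and $w_n$ in the relevant complement with $\norm{w_n}_{H^1}=1$ and $\Dif^2_{U_{\varepsilon_n,z_n}} I_{\varepsilon_n}\brackets{w_n,w_n}\to \ell \leq 0$, I would recenter at $z_n$ and rescale by $\lambda_{\varepsilon_n z_n}$, which stays in a compact subset of $\ooi{0,\infty}$ thanks to (\nameref{V_1}), (\nameref{V_2}). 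Since $V_{\varepsilon_n}$ is slowly varying, the recentered potentials converge to a constant $\lambda_*^2>0$ locally uniformly, and the rescaled quadratic forms converge to the linearized form of $\overline{I}_{\lambda_*}$ at its soliton, i.e.\ to the model operator $-\Delta + 1 - p U^{p-1}$ up to scaling. The contradiction comes from the non-degeneracy of $U$: this operator has exactly one negative direction, already captured by $U$ itself because $\angles{(-\Delta + 1 - pU^{p-1})U \mid U} = (1-p)\norm{U}_{L^{p+1}}^{p+1} < 0$, and kernel exactly $\mathrm{span}\set{\partial_1 U, \partial_2 U, \partial_3 U}$. Using Lemma \ref{lem:tangent} to replace $\dot{U}_{\varepsilon_n,z_n,i}$ by $\partial_i U_{\varepsilon_n,z_n}$ up to an $O(\varepsilon_n)$ error, the orthogonality constraints on $w_n$ pass to the limit as orthogonality to $U$ and to the $\partial_i U$. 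The delicate point is to rule out vanishing of the weak limit: if it vanishes, the compact term $\int U^{p-1}(\cdot)^2$ disappears and the remaining form is coercive, while if it does not vanish the non-degeneracy applies; either alternative contradicts $\ell \leq 0$.

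Finally I would combine the two estimates: on the prescribed complement,
$\Dif^2_{U_{\varepsilon,z}} J_\varepsilon \brackets{w,w} \geq \parens{c - C\varepsilon^2}\norm{w}_{H^1}^2$,
so choosing $\varepsilon_0$ with $C\varepsilon_0^2 < c/2$ yields the claim. The only genuinely nonstandard ingredient is the treatment of the Poisson terms; the coercivity of the Schr\"odinger part follows the scheme of \cite[Section 8.4]{AM06}, the only extra care being the uniformity in $z\in\real^3$ and the slowly varying potential $V_\varepsilon$, both absorbed by the recentering argument above.
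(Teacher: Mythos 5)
Your proposal is correct and follows essentially the same route as the paper: the paper's proof consists precisely of the $\varepsilon^2$-perturbation estimate on the two Poisson terms (via Lemma \ref{lem:bound_phi_eps} and Corollary \ref{cor:estimate_Dif_phi_eps}), after which it invokes \cite[Lemma 8.9]{AM06} for the uniform coercivity of $\Dif^2_{U_{\varepsilon,z}} I_\varepsilon$ on the stated complement. The only difference is that where the paper simply cites that lemma, you sketch its proof (soundly, via recentering, the negative Rayleigh quotient of $U$ for the linearized operator, and non-degeneracy).
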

\begin{proof}
In view of Lemma \ref{lem:bound_phi_eps} and Corollary \ref{cor:estimate_Dif_phi_eps},
\begin{align*}
\abs*{
	\Dif^2_{U_{\varepsilon, z}} J_\varepsilon \brackets{u, u}
	-
	\Dif^2_{U_{\varepsilon, z}} I_\varepsilon \brackets{u, u}
}
&=
\abs*{
	\int 
		\Dif_{U_{\varepsilon, z}} \phi_\varepsilon \brackets{u} U_{\varepsilon, z} u
		+
		\phi_\varepsilon \parens{U_{\varepsilon, z}} u^2
}
\\
&\lesssim
\norm*{\Dif_{U_{\varepsilon, z}} \phi_\varepsilon \brackets{u}}_{\mathcal{D}^{1, 2}}
\norm{u}_{H^1}
+
\norm*{\phi_\varepsilon \parens{U_{\varepsilon, z}}}_{\mathcal{D}^{1, 2}}
\norm{u}_{H^1}^2
\\
&\lesssim
\varepsilon^2
\norm{u}_{H^1}^2.
\end{align*}
Due to this estimate, the result follows from \cite[Lemma 8.9]{AM06}.
\end{proof}

Let $A_{\varepsilon, z} \colon H^1 \to H^1$ and $L_{\varepsilon, z} \colon W_{\varepsilon, z} \to W_{\varepsilon, z}$ be respectively given by
\[
A_{\varepsilon, z} \brackets{u}
:=
R \circ \Dif_{U_{\varepsilon, z}}^2 J_\varepsilon \brackets{u, \cdot}
\quad\text{and}\quad
L_{\varepsilon, z} \brackets{w}
=
\Pi_{\varepsilon, z} \circ A_{\varepsilon, z} \brackets{w},
\]
where $R \colon H^{- 1} \to H^1$ denotes the Riesz isomorphism.  The next result, which is similar to \cite[Lemma 8.10]{AM06}, establishes a sufficient condition to guarantee that $L_{\varepsilon, z}$ is invertible.

\begin{lem} \label{lem:bound_inverse_L}
There exists $\varepsilon_0 \in \ooi{0, 1}$ such that $L_{\varepsilon, z}$ is invertible and
\[\norm{L_{\varepsilon, z}^{-1}}_{\End\parens{W_{\varepsilon, z}}} \lesssim 1\]
for every
$\parens{\varepsilon, z} \in \ooi{0, \varepsilon_0} \times \real^3$.
\end{lem}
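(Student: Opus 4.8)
The plan is to prove that $L_{\varepsilon, z}$ is invertible with a uniformly bounded inverse by showing that it is uniformly coercive on $W_{\varepsilon, z}$, and then appealing to the Lax--Milgram theorem (or the Riesz representation for bounded bilinear forms). The key observation is that $L_{\varepsilon, z}$ is exactly the operator whose associated bilinear form is $\langle L_{\varepsilon, z} w_1 \mid w_2\rangle_{H^1} = \Dif^2_{U_{\varepsilon, z}} J_\varepsilon \brackets{w_1, w_2}$ for $w_1, w_2 \in W_{\varepsilon, z}$, because $\Pi_{\varepsilon, z}$ is the orthogonal projection onto $W_{\varepsilon, z}$ and $R$ is the Riesz isomorphism. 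Thus invertibility of $L_{\varepsilon, z}$ with controlled inverse norm is equivalent to the coercivity estimate $\Dif^2_{U_{\varepsilon, z}} J_\varepsilon \brackets{w, w} \gtrsim \norm{w}_{H^1}^2$ holding uniformly for $w \in W_{\varepsilon, z}$.

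First I would record that $L_{\varepsilon, z}$ is self-adjoint (since $\Dif^2_{U_{\varepsilon, z}} J_\varepsilon$ is a symmetric bilinear form) and bounded (this follows from the $C^2$ regularity of $J_\varepsilon$ in Proposition \ref{prop:variational} together with Lemma \ref{lem:bound_phi_eps} and Corollary \ref{cor:estimate_Dif_phi_eps}, which control all the terms in $\Dif^2_{U_{\varepsilon, z}} J_\varepsilon$ uniformly in $z$ for $\varepsilon \in \ooi{0,1}$). The crucial step is the lower bound. The natural candidate is Lemma \ref{lem:coercive}, which gives coercivity on $\parens{\mathrm{span}\set{U_{\varepsilon, z}} \oplus \Tan_{U_{\varepsilon, z}}\mathcal{Z}_\varepsilon}^\perp$. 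The difficulty is that $W_{\varepsilon, z} = \parens{\Tan_{U_{\varepsilon, z}} Z_\varepsilon}^\perp$ is \emph{larger} than the subspace in Lemma \ref{lem:coercive}: it contains the direction $U_{\varepsilon, z}$ (more precisely, the projection of $U_{\varepsilon, z}$ onto $W_{\varepsilon, z}$), along which $\Dif^2_{U_{\varepsilon, z}} J_\varepsilon$ is negative, since $U_{\varepsilon, z}$ is a mountain-pass type critical point of $\overline{I}_{\lambda_{\varepsilon z}}$.

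To bridge this gap I would decompose any $w \in W_{\varepsilon, z}$ along $\mathbb{R}\, U_{\varepsilon, z}$ and its orthogonal complement inside $W_{\varepsilon, z}$, and exploit that for the limiting autonomous functional $\overline{I}_{\lambda_{\varepsilon z}}$ the quadratic form $\Dif^2 \overline{I}_{\lambda_{\varepsilon z}}$ has a single negative direction (spanned by $U_{\varepsilon, z}$) and a three-dimensional kernel (spanned by the $\partial_i U_{\varepsilon, z}$), while being uniformly positive on the remaining orthogonal complement -- this is the standard non-degeneracy of the ground state $U$. Since $W_{\varepsilon, z}$ is orthogonal to $\Tan_{U_{\varepsilon, z}} Z_\varepsilon$, which by Lemma \ref{lem:tangent} is $\varepsilon$-close to $\mathrm{span}\set{\partial_i U_{\varepsilon, z}}$, any $w \in W_{\varepsilon, z}$ has only an $O(\varepsilon)$ component in the kernel directions; combined with the $\varepsilon^2$-smallness of the difference $\Dif^2 J_\varepsilon - \Dif^2 I_\varepsilon$ from the estimate in the proof of Lemma \ref{lem:coercive}, this localizes the analysis to the finite-dimensional competition between the negative direction $U_{\varepsilon, z}$ and the positive part. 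The main obstacle is precisely controlling this finite-dimensional form uniformly in $z$, i.e.\ showing that the spectral gap of $\Dif^2 \overline{I}_{\lambda_{\varepsilon z}}$ stays bounded away from zero as $z$ ranges over $\real^3$; this is handled using (\nameref{V_1}), (\nameref{V_2}), which confine $\lambda_{\varepsilon z} = V(\varepsilon z)^{1/2}$ to a compact subinterval of $\ooi{0, \infty}$, and the translation/scaling covariance of $U$. Once uniform coercivity on $W_{\varepsilon, z}$ is established, choosing $\varepsilon_0$ small enough to absorb the $O(\varepsilon)$ and $O(\varepsilon^2)$ perturbations yields $\Dif^2_{U_{\varepsilon, z}} J_\varepsilon \brackets{w, w} \gtrsim \norm{w}_{H^1}^2$ on $W_{\varepsilon, z}$, and the Lax--Milgram theorem then gives both invertibility of $L_{\varepsilon, z}$ and the uniform bound $\norm{L_{\varepsilon, z}^{-1}}_{\End\parens{W_{\varepsilon, z}}} \lesssim 1$, as claimed. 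This closely parallels \cite[Lemma 8.10]{AM06}.
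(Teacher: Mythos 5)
Your intermediate analysis is largely on the right track, but the framing and the final step contain a genuine error: the uniform coercivity $\Dif^2_{U_{\varepsilon, z}} J_\varepsilon \brackets{w, w} \gtrsim \norm{w}_{H^1}^2$ on all of $W_{\varepsilon, z}$ that you claim to establish at the end is false, so Lax--Milgram is not applicable. (Already your opening assertion that invertibility with bounded inverse is \emph{equivalent} to coercivity is incorrect for an indefinite self-adjoint operator; it is equivalent to the spectrum staying away from $0$.) Concretely, take $w = \Pi_{\varepsilon, z} \brackets{U_{\varepsilon, z}} \in W_{\varepsilon, z}$, which by \eqref{eqn:aux:18} is within $O \parens{\varepsilon}$ of $U_{\varepsilon, z}$ in $H^1$. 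Since $U_{\varepsilon, z}$ solves \eqref{P_lambda} with $\lambda = \lambda_{\varepsilon z}$, one computes $\Dif^2_{U_{\varepsilon, z}} \overline{I}_{\lambda_{\varepsilon z}} \brackets{U_{\varepsilon, z}, U_{\varepsilon, z}} = - \parens{p - 1} \parens{\norm{U_{\varepsilon, z}}_{\mathcal{D}^{1, 2}}^2 + V \parens{\varepsilon z} \norm{U_{\varepsilon, z}}_{L^2}^2}$, which is negative and bounded away from $0$ by \eqref{eqn:aux:21}, while the corrections coming from $V_\varepsilon - V_\varepsilon \parens{z}$ and from $\phi_\varepsilon$ are $O \parens{\varepsilon \abs{\nabla V \parens{\varepsilon z}} + \varepsilon^2}$. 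Hence $\Dif^2_{U_{\varepsilon, z}} J_\varepsilon \brackets{w, w} \leq -c < 0$ for this $w$ once $\varepsilon$ is small: the form is genuinely indefinite on $W_{\varepsilon, z}$, exactly as you yourself observe in your second paragraph, and no ``competition'' with the positive part can restore coercivity.

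The way to close the argument --- and what the paper does --- is to accept the indefiniteness and prove invertibility from an approximate block-diagonalization. One first shows the operator identity \eqref{eqn:aux:19}, namely $\norm{A_{\varepsilon, z} \brackets{U_{\varepsilon, z}} + \parens{p - 1} U_{\varepsilon, z}}_{H^1} \lesssim \varepsilon \abs{\nabla V \parens{\varepsilon z}} + \varepsilon^2$, so that on the line $B_{\varepsilon, z} := \mathrm{span} \set{\Pi_{\varepsilon, z} \brackets{U_{\varepsilon, z}}}$ the operator $L_{\varepsilon, z}$ is an $O \parens{\varepsilon}$ perturbation of $- \parens{p - 1} \id$; on the complementary subspace $\parens{\mathrm{span} \set{U_{\varepsilon, z}} \oplus \Tan_{U_{\varepsilon, z}} Z_\varepsilon}^\perp$ it is coercive by Lemma \ref{lem:coercive}; and the coupling between the two pieces is $O \parens{\varepsilon}$ by \eqref{eqn:aux:18}. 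A bounded self-adjoint operator which, with respect to such a decomposition, is negative definite with gap $\simeq p - 1$ on a one-dimensional block, positive definite with a uniform gap on the complement, and has $O \parens{\varepsilon}$ off-diagonal terms is invertible with $\norm{L_{\varepsilon, z}^{-1}}_{\End \parens{W_{\varepsilon, z}}} \lesssim 1$ for small $\varepsilon$, even though it is not coercive. If you replace the final appeal to Lax--Milgram by this two-block argument, the rest of your outline --- including the uniformity in $z$ obtained from (\nameref{V_1}), (\nameref{V_2}) --- is sound and coincides with the paper's proof.
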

\begin{proof}
In view of Lemma \ref{lem:tangent},
\begin{equation} \label{eqn:aux:18}
\abs*{
	\angles{
		U_{\varepsilon, z} \mid \dot{U}_{\varepsilon, z, i}
	}_{H^1}
}
\leq
\abs*{
	\angles{
		U_{\varepsilon, z} \mid \dot{U}_{\varepsilon, z, i} + \partial_i U_{\varepsilon, z}
	}_{H^1}
}
+
\underbrace{
	\abs*{
		\angles{
			U_{\varepsilon, z} \mid \partial_i U_{\varepsilon, z}
		}_{H^1}
	}
}_{=0}
\lesssim
\varepsilon.
\end{equation}

We claim that
\begin{equation} \label{eqn:aux:19}
\norm{
	A_{\varepsilon, z} \brackets{U_{\varepsilon, z}}
	+
	\parens{p-1} U_{\varepsilon, z}
}_{H^1}
\lesssim
\varepsilon \abs{\nabla V \parens{\varepsilon z}} + \varepsilon^2.
\end{equation}
Indeed, as $U_{\varepsilon, z}$ solves  \eqref{P_lambda} with $\lambda=\lambda_{\varepsilon z}$, we obtain
\begin{multline*}
\Dif^2_{U_{\varepsilon, z}} J_\varepsilon \brackets{U_{\varepsilon, z}, u}
+
\parens{p - 1} \Big(
	\angles{U_{\varepsilon, z} \mid u}_{\mathcal{D}^{1, 2}}
	+
	V_{\varepsilon} \parens{ z} \angles{U_{\varepsilon, z} \mid u}_{L^2}
\Big)
=
\\
=
\int \left( V_\varepsilon - V_\varepsilon \parens{z} \right) U_{\varepsilon, z} u
+
\int \left(
	\Dif_{U_{\varepsilon, z}} \phi_\varepsilon \brackets{U_{\varepsilon, z}}
	U_{\varepsilon, z} u
	+
	\phi_\varepsilon \parens{U_{\varepsilon, z}} U_{\varepsilon, z} u
	\right)
.
\end{multline*}
The claim then follows by considering Lemma \ref{lem:bound_phi_eps}, Corollary \ref{cor:estimate_Dif_phi_eps} and arguing as in the proof of Lemma \ref{lem:pseudo_critical}.

It is clear that
\begin{multline*}
	L_{\varepsilon, z} \circ \Pi_{\varepsilon, z} \brackets{U_{\varepsilon, z}}
	=
	-\parens{p-1} \Pi_{\varepsilon, z} \brackets{U_{\varepsilon, z}}
	+
	\Pi_{\varepsilon, z} \brackets*{
		A_{\varepsilon, z} \brackets{U_{\varepsilon, z}}
		+
		\parens{p-1} U_{\varepsilon, z}
	}
	+
	\\
	+
	\Pi_{\varepsilon, z} \circ A_{\varepsilon, z} \brackets*{
		\Pi_{\varepsilon, z} \brackets{U_{\varepsilon, z}}
		-
		U_{\varepsilon, z}
	}.
\end{multline*}
Considering \eqref{eqn:aux:18} and \eqref{eqn:aux:19}, we deduce that
\[
\abs*{
	L_{\varepsilon, z} \circ \Pi_{\varepsilon, z} \brackets{U_{\varepsilon, z}}
	+
	\parens{p-1} \Pi_{\varepsilon, z} \brackets{U_{\varepsilon, z}}
}
\lesssim
\varepsilon.
\]
Therefore,
\[
\norm*{
	L_{\varepsilon, z}|_{B_{\varepsilon, z}} + \parens{p - 1} \id_{B_{\varepsilon, z}}
}_{
	\End \parens{B_{\varepsilon, z}, W_{\varepsilon, z}}
}
\lesssim \varepsilon,
\quad\text{where}\quad
B_{\varepsilon, z}
:=
\mathrm{span} \set{\Pi_{\varepsilon, z} \brackets{U_{\varepsilon,z}}}.
\]
At this point, the result follows from Lemma \ref{lem:coercive}.
\end{proof}

We need a few estimates on the derivatives of $J_\varepsilon$.

\begin{lem}\label{lem:approx_der_J}
Let $\mathcal{B}$ be a bounded subset of $H^1$ such that
\[
\set{U_{\varepsilon, z} + w: \parens{z, w} \in \real^3 \times \mathcal{B}}
\]
is bounded away from zero in $H^1$ for every $\varepsilon \in \ooi{0, 1}$. Then
\begin{equation} \label{eqn:aux:10}
\norm*{
	\Dif_{U_{\varepsilon, z} + w} J_\varepsilon - \Dif_{U_{\varepsilon, z}} J_\varepsilon
}_{H^{-1}}
\lesssim
\norm{w}_{H^1} + \norm{w}_{H^1}^p,
\end{equation}
\begin{equation} \label{eqn:aux:13}
\norm*{
	\Dif_{U_{\varepsilon, z} + w} J_\varepsilon
	-
	\Dif_{U_{\varepsilon, z}} J_\varepsilon
	-
	\Dif^2_{U_{\varepsilon, z}} J_\varepsilon \brackets{w, \cdot}
}_{H^{-1}}
\lesssim
\norm{w}_{H^1}^2 + \norm{w}_{H^1}^p + \varepsilon^2
\end{equation}
and
\begin{equation} \label{eqn:aux:9}
\norm*{
	\Dif^2_{U_{\varepsilon, z} + w} J_\varepsilon
	-
	\Dif^2_{U_{\varepsilon, z}} J_\varepsilon
}_{\End \parens{H^1, H^{-1}}}
\lesssim
\norm{w}_{H^1} + \norm{w}^{p-1}_{H^1} + \varepsilon^2
\end{equation}
for every
$\parens{\varepsilon, z, w} \in \ooi{0, 1} \times \real^3 \times \mathcal{B}$.
\end{lem}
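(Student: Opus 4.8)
The plan is to prove the three estimates together by exploiting the additive structure of the derivative formulas in Proposition \ref{prop:variational}. I would split both $\Dif_u J_\varepsilon$ and $\Dif^2_u J_\varepsilon$ into a quadratic contribution coming from $\angles{\cdot \mid \cdot}_{H^1_\varepsilon}$, a nonlocal contribution built from $\phi_\varepsilon$ and $\Dif_u \phi_\varepsilon$, and the power nonlinearity $u \abs{u}^{p - 1}$. Since the quadratic part has a first derivative that is linear in the base point and a constant second derivative, it contributes exactly $\angles{w \mid \cdot}_{H^1_\varepsilon}$, of norm $\lesssim \norm{w}_{H^1}$, to the first difference \eqref{eqn:aux:10} and vanishes identically in the second-order differences \eqref{eqn:aux:13} and \eqref{eqn:aux:9}. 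Throughout, I would use that $\norm{U_{\varepsilon, z}}_{H^1}$ is bounded uniformly in $\parens{\varepsilon, z}$ by \eqref{eqn:aux:21}, that $\mathcal{B}$ is bounded, and the Sobolev embeddings $H^1 \hookrightarrow L^{p + 1}$ (valid as $2 < p + 1 < 6$), $H^1 \hookrightarrow L^{12 / 5}$ and $\mathcal{D}^{1, 2} \hookrightarrow L^6$.

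The nonlinear part is handled by a short list of elementary pointwise inequalities for $g \parens{t} := \abs{t}^{p - 1} t$. For \eqref{eqn:aux:10} I would use $\abs{g \parens{a + h} - g \parens{a}} \lesssim \parens{\abs{a}^{p - 1} + \abs{h}^{p - 1}} \abs{h}$; for \eqref{eqn:aux:9} the analogous bound for $g' \parens{t} = p \abs{t}^{p - 1}$, namely $\abs*{\abs{a + h}^{p - 1} - \abs{a}^{p - 1}} \lesssim \abs{h} \parens{\abs{a} + \abs{h}}^{p - 2}$ when $p \geq 2$ and $\lesssim \abs{h}^{p - 1}$ when $1 < p < 2$; and for the second-order remainder in \eqref{eqn:aux:13} the bound $\abs*{g \parens{a + h} - g \parens{a} - g' \parens{a} h} \lesssim \abs{a}^{p - 2} \abs{h}^2 + \abs{h}^p$ when $p \geq 2$ and $\lesssim \abs{h}^p$ when $1 < p < 2$. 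Substituting $a = U_{\varepsilon, z}$ and $h = w$, integrating against a test function $\varphi$ with $\norm{\varphi}_{H^1} \leq 1$, and applying Hölder's inequality with exponents drawn from $\tfrac{p + 1}{p - 1}, p + 1, \ldots$ together with the boundedness of $\norm{U_{\varepsilon, z}}_{L^{p + 1}}$ converts these pointwise bounds into precisely the combinations $\norm{w}_{H^1} + \norm{w}_{H^1}^p$, $\norm{w}_{H^1} + \norm{w}_{H^1}^{p - 1}$ and $\norm{w}_{H^1}^2 + \norm{w}_{H^1}^p$ appearing on the right-hand sides. The split $p \geq 2$ versus $1 < p < 2$ is forced by the failure of $g$ to be $C^2$ in the latter regime, and it is exactly this dichotomy that makes both a quadratic and a $p$-th power term appear.

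The nonlocal part I would treat crudely, since $\phi_\varepsilon$ is only $C^1$ on $L^{12 / 5} \setminus \set{0}$ and no second-order Taylor expansion is available. The hypothesis that $\set*{U_{\varepsilon, z} + w}$ is bounded away from $0$ ensures that $\phi_\varepsilon \parens{U_{\varepsilon, z} + w}$ is defined and allows the use of Lemma \ref{lem:bound_phi_eps} and Corollary \ref{cor:estimate_Dif_phi_eps}. After expanding each nonlocal difference, I would isolate the factors $\phi_\varepsilon \parens{\cdot}$, $\phi_\varepsilon \parens{u + w} - \phi_\varepsilon \parens{u}$ and $\Dif_\cdot \phi_\varepsilon \brackets{\cdot}$ and bound their $\mathcal{D}^{1, 2}$-norms by $\varepsilon^2$ times bounded quantities, even for the Taylor-type remainder $\phi_\varepsilon \parens{u + w} - \phi_\varepsilon \parens{u} - \Dif_u \phi_\varepsilon \brackets{w}$, which the triangle inequality already controls by $\varepsilon^2$. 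Pairing with test functions through $\mathcal{D}^{1, 2} \hookrightarrow L^6$ and the $L^{12 / 5}$-Hölder splitting used in the proof of Lemma \ref{lem:pseudo_critical}, every nonlocal contribution is $\lesssim \varepsilon^2 \norm{w}_{H^1}$ — absorbed into $\norm{w}_{H^1}$ in \eqref{eqn:aux:10} — or $\lesssim \varepsilon^2$ for the remainders, which yields the extra $\varepsilon^2$ summand in \eqref{eqn:aux:13} and \eqref{eqn:aux:9}.

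I expect the main obstacle to be the nonlinear part: securing the correct exponents of $\norm{w}_{H^1}$ uniformly over the whole range $1 < p < 5$ requires the careful case analysis of the pointwise inequalities for $g$ and $g'$ together with a matching choice of Hölder exponents. By contrast, the nonlocal part, despite the length of its expansions, is disposed of entirely by the $\varepsilon^2$ gains already recorded in Lemma \ref{lem:bound_phi_eps} and Corollary \ref{cor:estimate_Dif_phi_eps} combined with the uniform boundedness of $U_{\varepsilon, z}$ and $\mathcal{B}$.
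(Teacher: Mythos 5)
Your proposal is correct and follows essentially the same route as the paper: the same three-way decomposition into the quadratic $H^1_\varepsilon$ part, the nonlocal terms controlled by the $\varepsilon^2$ gains of Lemma \ref{lem:bound_phi_eps} and Corollary \ref{cor:estimate_Dif_phi_eps}, and the power nonlinearity handled by elementary pointwise inequalities for $t\mapsto\abs{t}^{p-1}t$ combined with H\"older and Sobolev embeddings. The only difference is that you spell out the $p\geq 2$ versus $1<p<2$ dichotomy that the paper leaves implicit when it asserts the bounds on the nonlinear remainders.
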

\begin{proof}
A straightforward computation shows that
\begin{multline*}
\left(\Dif_{U_{\varepsilon, z} + w} J_\varepsilon - \Dif_{U_{\varepsilon, z}} J_\varepsilon \right)
\brackets{u}
=
\angles{w \mid u}_{H^1_\varepsilon}
+
\int 
	\left( \phi_\varepsilon \parens{U_{\varepsilon, z} + w} - \phi_\varepsilon \parens{U_{\varepsilon, z}}\right)
	u w
+
\\
-
\int 
	\left( \parens{U_{\varepsilon, z} + w} \abs{U_{\varepsilon, z} + w}^{p - 1} - U_{\varepsilon, z}^p \right) u.
\end{multline*}
We can estimate the first term by using the Cauchy--Schwarz Inequality. In view of Lemma \ref{lem:bound_phi_eps}
we infer
\begin{eqnarray*}
\int 
	\left(\phi_\varepsilon \parens{U_{\varepsilon, z} + w} - \phi_\varepsilon \parens{U_{\varepsilon, z}}\right)
	u w
&\lesssim&
\Big( 
	\norm{\phi_\varepsilon \parens{U_{\varepsilon, z} + w}}_{\mathcal{D}^{1, 2}}
	+
	\norm{\phi_\varepsilon \parens{U_{\varepsilon, z}}}_{\mathcal{D}^{1, 2}}
\Big)
\norm{u}_{H^1}
\norm{w}_{H^1}\\
&\lesssim&
\varepsilon^2 \norm{u}_{H^1} \norm{w}_{H^1}.
\end{eqnarray*}
Finally,
\[
\abs*{
	\int 
		\left(
			\parens{U_{\varepsilon, z} + w} \abs{U_{\varepsilon, z} + w}^{p - 1}
			- 
			U_{\varepsilon, z}^p
		\right)
		u
}
\lesssim
\parens*{\norm{w}_{H^1} + \norm{w}_{H^1}^p} \norm{u}_{H^1}.
\]
From these estimates we get \eqref{eqn:aux:10}.

For the second estimate, it is easy to check that
\begin{multline*}
\Big(
	\Dif_{U_{\varepsilon, z} + w} J_\varepsilon
	-
	\Dif_{U_{\varepsilon, z}} J_\varepsilon
\Big) \brackets{u}
-
\Dif^2_{U_{\varepsilon, z}} J_\varepsilon \brackets{w, u}
=
\\
=
\int \Big(
	\parens*{
		\phi_\varepsilon \parens{U_{\varepsilon, z} + w}
		-
		\phi_\varepsilon \parens{U_{\varepsilon, z}}
		-
		\Dif_{U_{\varepsilon, z}} \phi_\varepsilon \brackets{w}
	}
	U_{\varepsilon, z} u
\Big)
-
\int 
	\phi_\varepsilon \parens{U_{\varepsilon, z}} w u
+
\\
-
\int 
	\Big(
		\parens{U_{\varepsilon, z} + w} \abs{U_{\varepsilon, z} + w}^{p - 1}
		-
		U_{\varepsilon, z}^p
		-
		p U_{\varepsilon, z}^{p-1}w
	\Big) u.
\end{multline*}
Once again, we estimate the terms on the right-hand side. Due to Lemma \ref{lem:bound_phi_eps} and Corollary \ref{cor:estimate_Dif_phi_eps},
\[
\abs*{
	\int 
		\parens*{
			\phi_\varepsilon \parens{U_{\varepsilon, z} + w}
			-
			\phi_\varepsilon \parens{U_{\varepsilon, z}}
			-
			\Dif_{U_{\varepsilon, z}} \phi_\varepsilon \brackets{w}
		}
		U_{\varepsilon, z} u
	}
\lesssim
\varepsilon^2 \norm{u}_{H^1}.
\]
Furthermore,
\[
\abs*{
	\int 
	\left(
		\parens{U_{\varepsilon, z} + w} \abs{U_{\varepsilon, z} + w}^{p - 1}
		-
		U_{\varepsilon, z}^p
		-
		p U_{\varepsilon, z}^{p-1}w
	\right) u
}
\lesssim
\parens*{\norm{w}_{H^1}^2 + \norm{w}_{H^1}^p} \norm{u}_{H^1},
\]
from which we get \eqref{eqn:aux:13}.

The proof of  \eqref{eqn:aux:9} is  similar to the previous ones.
\end{proof}

By arguing similarly as in the proof of \cite[Lemma 3.4]{IV08} we prove that the auxiliary equation has solutions.

\begin{lem} \label{lem:Banach_fixed_point}
There exist
$\varepsilon_0 \in \ooi{0, 1}$ and $\bar{C} \in \ooi{0, \infty}$
such that given
$\parens{\varepsilon, z} \in \ooi{0, \varepsilon_0} \times \real^3$,
the problem
\[
\Pi_{\varepsilon, z} \brackets{
	\nabla J_\varepsilon \parens{U_{\varepsilon, z} + w}
} = 0,
\quad
w \in \mathcal{W}_{\varepsilon, z, \bar{C}}
\]
has a unique solution, where
\[
\mathcal{W}_{\varepsilon, z, \bar{C}} := \left\{
	w \in W_{\varepsilon, z}:
	\norm{w}_{H^1}
	\leq
	\bar{C} \left(
		\varepsilon \abs{\nabla V \parens{\varepsilon z}}
		+
		\varepsilon^2
	\right)
\right\}.
\]
\end{lem}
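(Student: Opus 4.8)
The plan is to recast the auxiliary equation as a fixed-point problem for a contraction and apply the Banach fixed-point theorem. Throughout, abbreviate $\rho_{\varepsilon, z} := \varepsilon \abs{\nabla V \parens{\varepsilon z}} + \varepsilon^2$, so that $\mathcal{W}_{\varepsilon, z, \bar{C}}$ is the closed ball of radius $\bar{C} \rho_{\varepsilon, z}$ in $W_{\varepsilon, z}$, hence a complete metric space. First I would record that this ball is bounded away from zero in $H^1$: since $\norm{U_{\varepsilon, z}}_{H^1} \gtrsim 1$ by \eqref{eqn:aux:21} while $\rho_{\varepsilon, z} \to 0$ uniformly in $z$ by (\nameref{V_1}) (which bounds $\nabla V$), for $\varepsilon$ small we have $\norm{U_{\varepsilon, z} + w}_{H^1} \gtrsim 1$ for every $w \in \mathcal{W}_{\varepsilon, z, \bar{C}}$. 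This guarantees that $\phi_\varepsilon$ is well defined along the ansatz and that the hypotheses of Lemma \ref{lem:approx_der_J} hold with $\mathcal{B} = \mathcal{W}_{\varepsilon, z, \bar{C}}$.

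Next I would linearize. Introducing the nonlinear remainder
\[
N_{\varepsilon, z} \parens{w}
:=
\nabla J_\varepsilon \parens{U_{\varepsilon, z} + w}
-
\nabla J_\varepsilon \parens{U_{\varepsilon, z}}
-
A_{\varepsilon, z} \brackets{w}
\]
and using that $\Pi_{\varepsilon, z} \circ A_{\varepsilon, z} \brackets{w} = L_{\varepsilon, z} \brackets{w}$ for $w \in W_{\varepsilon, z}$, the auxiliary equation $\Pi_{\varepsilon, z} \brackets{\nabla J_\varepsilon \parens{U_{\varepsilon, z} + w}} = 0$ becomes $L_{\varepsilon, z} \brackets{w} + \Pi_{\varepsilon, z} \brackets{\nabla J_\varepsilon \parens{U_{\varepsilon, z}} + N_{\varepsilon, z} \parens{w}} = 0$. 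By the invertibility of $L_{\varepsilon, z}$ with uniformly bounded inverse (Lemma \ref{lem:bound_inverse_L}), this is equivalent to the fixed-point equation $w = F_{\varepsilon, z} \parens{w}$, where
\[
F_{\varepsilon, z} \parens{w}
:=
- L_{\varepsilon, z}^{-1} \circ \Pi_{\varepsilon, z} \brackets{
	\nabla J_\varepsilon \parens{U_{\varepsilon, z}}
	+
	N_{\varepsilon, z} \parens{w}
}
\in
W_{\varepsilon, z}.
\]
I would then show $F_{\varepsilon, z}$ maps $\mathcal{W}_{\varepsilon, z, \bar{C}}$ into itself. Since $\norm{L_{\varepsilon, z}^{-1}}_{\End \parens{W_{\varepsilon, z}}} \lesssim 1$ and $\Pi_{\varepsilon, z}$ is a projection, it suffices to bound the bracket: Lemma \ref{lem:pseudo_critical} gives $\norm{\nabla J_\varepsilon \parens{U_{\varepsilon, z}}}_{H^1} \lesssim \rho_{\varepsilon, z}$, while \eqref{eqn:aux:13} gives $\norm{N_{\varepsilon, z} \parens{w}}_{H^1} \lesssim \norm{w}_{H^1}^2 + \norm{w}_{H^1}^p + \varepsilon^2 \lesssim \bar{C}^2 \rho_{\varepsilon, z}^2 + \bar{C}^p \rho_{\varepsilon, z}^p + \varepsilon^2$. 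Because $\varepsilon^2 \leq \rho_{\varepsilon, z}$ and $\rho_{\varepsilon, z} \to 0$, the quadratic and $p$-th power terms are $o \parens{\rho_{\varepsilon, z}}$ as $\varepsilon \to 0$ (this uses $p > 1$), so fixing $\bar{C}$ larger than the ambient constants and shrinking $\varepsilon_0$ yields $\norm{F_{\varepsilon, z} \parens{w}}_{H^1} \leq \bar{C} \rho_{\varepsilon, z}$.

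For the contraction property, given $w_1, w_2 \in \mathcal{W}_{\varepsilon, z, \bar{C}}$ I would write, via the fundamental theorem of calculus,
\[
N_{\varepsilon, z} \parens{w_1} - N_{\varepsilon, z} \parens{w_2}
=
\int_0^1 \parens*{A_{\varepsilon, z, t} - A_{\varepsilon, z}} \brackets{w_1 - w_2} \dif t,
\]
where $A_{\varepsilon, z, t}$ denotes the Riesz representation of $\Dif^2_{U_{\varepsilon, z} + w_2 + t \parens{w_1 - w_2}} J_\varepsilon$. Estimate \eqref{eqn:aux:9} bounds $\norm{A_{\varepsilon, z, t} - A_{\varepsilon, z}}_{\End \parens{H^1}} \lesssim \bar{C} \rho_{\varepsilon, z} + \parens*{\bar{C} \rho_{\varepsilon, z}}^{p - 1} + \varepsilon^2$, which tends to $0$ uniformly in $z, t$ as $\varepsilon \to 0$; combined with $\norm{L_{\varepsilon, z}^{-1}} \lesssim 1$ this gives $\norm{F_{\varepsilon, z} \parens{w_1} - F_{\varepsilon, z} \parens{w_2}}_{H^1} \leq \frac{1}{2} \norm{w_1 - w_2}_{H^1}$ for $\varepsilon_0$ small. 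The Banach fixed-point theorem then produces the unique solution. The main obstacle is the simultaneous, uniform-in-$z$ calibration of $\bar{C}$ and $\varepsilon_0$: one must absorb the stray additive $\varepsilon^2$ in \eqref{eqn:aux:13}, \eqref{eqn:aux:9} (a genuinely new feature coming from the nonlocal term $\phi_\varepsilon$, which does not vanish to higher order in $w$) into $\rho_{\varepsilon, z}$ using only $\varepsilon^2 \leq \rho_{\varepsilon, z}$, while keeping the contraction constant below $1$; since all implicit constants are independent of $z$, this can be arranged uniformly.
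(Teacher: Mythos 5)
Your proposal is correct and is essentially the paper's own argument: the map $F_{\varepsilon,z}$ you build is algebraically identical to the paper's $S_{\varepsilon,z}(w)=w-L_{\varepsilon,z}^{-1}\circ\Pi_{\varepsilon,z}[\nabla J_\varepsilon(U_{\varepsilon,z}+w)]$, and both proofs invoke Lemma \ref{lem:bound_inverse_L}, Lemma \ref{lem:approx_der_J} and Lemma \ref{lem:pseudo_critical} before applying the Banach fixed-point theorem on $\mathcal{W}_{\varepsilon,z,\bar C}$. The only (harmless) differences are presentational: you check the self-map property by directly estimating $F_{\varepsilon,z}(w)$ via \eqref{eqn:aux:13}, while the paper combines the contraction bound with an estimate of $S_{\varepsilon,z}(0)$, and you make explicit the bounded-away-from-zero hypothesis and the order in which $\bar C$ and $\varepsilon_0$ are calibrated.
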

\begin{proof}
Take $\varepsilon_0 \in \ooi{0, 1}$ as furnished by Lemma \ref{lem:bound_inverse_L}. Given
$\parens{\varepsilon, z} \in \ooi{0, \varepsilon_0} \times \real^3$,
let $S_{\varepsilon, z} \colon W_{\varepsilon, z} \to W_{\varepsilon, z}$ be given by
\[
S_{\varepsilon, z} \parens{w}
=
w
-
L_{\varepsilon, z}^{-1} \circ \Pi_{\varepsilon, z} \brackets{\nabla J_\varepsilon \parens{U_{\varepsilon, z} + w}}.
\]
At this point, it suffices to show that we are in a position to use the Banach Fixed Point Theorem.

Let us prove that if $\bar{C} \in \ooi{0, \infty}$ is sufficiently small, then $S_{\varepsilon, z}|_{\mathcal{W}_{\varepsilon, z, \bar{C}}}$ is a contraction. In fact, by the definitions and Lemma \ref{lem:approx_der_J},
\begin{eqnarray*}
\Dif_{w_1} S_{\varepsilon, z} \brackets{w_2}
&=&
w_2
-
L_{\varepsilon, z}^{-1} \circ R \circ \Dif^2_{U_{\varepsilon, z} + w_1} J_\varepsilon \brackets{
	w_2, \cdot
}
\\
&=&
L_{\varepsilon, z}^{-1}
\brackets*{
	L_{\varepsilon, z} \brackets{w_2}
	-
	R \circ \Dif^2_{U_{\varepsilon, z} + w_1} J_\varepsilon \brackets{w_2, \cdot}
}\\
&\lesssim&
\parens*{
	\norm{w_1}_{H^1} + \norm{w_1}_{H^1}^{p - 1} + \varepsilon^2
}
\norm{w_2}_{H^1},
\end{eqnarray*}
hence the result.


Now, we want to show that if $\bar{C} \in \ooi{0, \infty}$ is sufficiently small, then
\[
S_{\varepsilon, z} \parens{\mathcal{W}_{\varepsilon, z, \bar{C}}}
\subset
\mathcal{W}_{\varepsilon, z, \bar{C}}.
\]
On one hand,
\[
\left \|S_{\varepsilon, z} \parens{0} \right\|_{H^1}
=
\left \| L_{\varepsilon, z}^{-1} \left[ \nabla  J_\varepsilon(U_{\varepsilon, z} ) \right]   \right\|_{H^1}
\lesssim
\norm{ \nabla  J_\varepsilon(U_{\varepsilon, z} ) }_{H^1}.
\]
On the other hand, it follows from the fact that
$S_{\varepsilon, z}|_{\mathcal{W}_{\varepsilon, z, \bar{C}}}$ 
is a contraction that
\[
\norm{S_{\varepsilon, z} \parens{0} - S_{\varepsilon, z} \parens{w}}_{H^1}
\lesssim
\norm{w}_{H^1}
\lesssim
\norm{\nabla  J_\varepsilon(U_{\varepsilon, z} ) }_{H^1}.
\]
The Triangle Inequality then implies
\[
\norm{S_{\varepsilon, z} \parens{w}}_{H^1}
\lesssim
\norm{\nabla  J_\varepsilon (U_{\varepsilon, z} ) }_{H^1}.
\]
Finally, we conclude from Lemma \ref{lem:pseudo_critical}.
\end{proof}

It follows from the definition of $Z_\varepsilon$ that
$\Dif_{U_{\varepsilon, z}} \overline{I}_{\lambda_{\varepsilon z}} = 0$
for every $z \in \real^3$. Taking the $i$\textsuperscript{th} partial derivative with respect to $z$, we obtain
\begin{equation} \label{eqn:aux:17}
\int \left(
	\nabla \dot{U}_{\varepsilon, z, i} \cdot \nabla w
	+
	V_\varepsilon \parens{z} \dot{U}_{\varepsilon, z, i} w
	+
	\varepsilon \partial_i V \parens{\varepsilon z} U_{\varepsilon, z} w
	-
	p U_{\varepsilon, z}^{p - 1} \dot{U}_{\varepsilon, z, i} w
\right)
=
0
\end{equation}
for every $w \in H^1$. The next result will follow from this observation.

\begin{lem}\label{lem:estimate_D^2J_eps}
We have
\[
\norm*{\Dif^2_{U_{\varepsilon, z}} J_\varepsilon \brackets{\dot{U}_{\varepsilon, z, i}, \cdot}}_{H^{-1}}
\lesssim
\varepsilon \abs{\nabla V \parens{\varepsilon z}} + \varepsilon^2
\]
for every $\parens{\varepsilon, z} \in \ooi{0, 1} \times \real^3$ and $i \in \set{1, 2, 3}$.
\end{lem}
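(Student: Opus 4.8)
The plan is to start from the explicit formula for $\Dif^2_u J_\varepsilon$ in Proposition \ref{prop:variational}, specialized to $u = U_{\varepsilon, z}$, $w_1 = \dot{U}_{\varepsilon, z, i}$ and an arbitrary test function $w \in H^1$, and to bound the resulting linear functional of $w$ uniformly by $\parens{\varepsilon \abs{\nabla V \parens{\varepsilon z}} + \varepsilon^2} \norm{w}_{H^1}$; the $H^{-1}$ estimate then follows by taking the supremum over $\norm{w}_{H^1} \le 1$. Writing $\angles{\dot{U}_{\varepsilon, z, i} \mid w}_{H^1_\varepsilon}$ out explicitly, I would split the functional into a \emph{local} part
\[
\int \nabla \dot{U}_{\varepsilon, z, i} \cdot \nabla w + \int V_\varepsilon \dot{U}_{\varepsilon, z, i} w - p \int U_{\varepsilon, z}^{p - 1} \dot{U}_{\varepsilon, z, i} w
\]
and a \emph{nonlocal} part $\int \Dif_{U_{\varepsilon, z}} \phi_\varepsilon \brackets{\dot{U}_{\varepsilon, z, i}} U_{\varepsilon, z} w + \int \phi_\varepsilon \parens{U_{\varepsilon, z}} \dot{U}_{\varepsilon, z, i} w$.

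The heart of the argument, and the main obstacle, is the local part: each of its three terms is of order $\norm{w}_{H^1}$ with no visible smallness, so a naive estimate yields nothing. The key observation is that \eqref{eqn:aux:17}—obtained by differentiating the identity $\Dif_{U_{\varepsilon, z}} \overline{I}_{\lambda_{\varepsilon z}} = 0$ with respect to $z$—supplies exactly the cancellation needed. Rewriting $\int V_\varepsilon \dot{U}_{\varepsilon, z, i} w$ as $\int \parens{V_\varepsilon - V_\varepsilon \parens{z}} \dot{U}_{\varepsilon, z, i} w + V_\varepsilon \parens{z} \int \dot{U}_{\varepsilon, z, i} w$ and invoking \eqref{eqn:aux:17}, the local part collapses to
\[
\int \parens{V_\varepsilon - V_\varepsilon \parens{z}} \dot{U}_{\varepsilon, z, i} w - \varepsilon \, \partial_i V \parens{\varepsilon z} \int U_{\varepsilon, z} w,
\]
where every term now carries an explicit factor of $\varepsilon$. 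The first integral is estimated exactly as in Lemma \ref{lem:pseudo_critical}, using the Taylor bound $\abs{V_\varepsilon \parens{x} - V_\varepsilon \parens{z}} \lesssim \varepsilon \abs{\nabla V \parens{\varepsilon z}} \abs{x - z} + \varepsilon^2 \abs{x - z}^2$ from (\nameref{V_1}) together with the exponential decay of $\dot{U}_{\varepsilon, z, i}$—which it inherits from $U$ and $\nabla U$, being a finite combination of their rescalings, so that the moments $\int \abs{x - z}^k \dot{U}_{\varepsilon, z, i}^2$ converge—giving a contribution $\lesssim \parens{\varepsilon \abs{\nabla V \parens{\varepsilon z}} + \varepsilon^2} \norm{w}_{H^1}$. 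The second term is controlled by Cauchy--Schwarz and the uniform bound $\norm{U_{\varepsilon, z}}_{H^1} \lesssim 1$ from \eqref{eqn:aux:21}, yielding $\lesssim \varepsilon \abs{\nabla V \parens{\varepsilon z}} \norm{w}_{H^1}$.

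For the nonlocal part, both terms are handled routinely by the $\phi_\varepsilon$-estimates. Using Hölder's inequality in the form $\int f g h \le \norm{f}_{L^6} \norm{g}_{L^{3/2}} \norm{h}_{L^6}$ together with the embedding $\mathcal{D}^{1, 2} \hookrightarrow L^6$, I would bound $\norm{\phi_\varepsilon \parens{U_{\varepsilon, z}}}_{\mathcal{D}^{1, 2}}$ by Lemma \ref{lem:bound_phi_eps} and $\norm{\Dif_{U_{\varepsilon, z}} \phi_\varepsilon \brackets{\dot{U}_{\varepsilon, z, i}}}_{\mathcal{D}^{1, 2}}$ by Corollary \ref{cor:estimate_Dif_phi_eps}, each by $\varepsilon^2$ times quantities that stay bounded along $Z_\varepsilon$ (again by (\nameref{V_1}), (\nameref{V_2})); hence each nonlocal term is $\lesssim \varepsilon^2 \norm{w}_{H^1}$. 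Collecting the four contributions gives the claimed bound. I expect the only delicate point to be making the decay/moment claim for $\dot{U}_{\varepsilon, z, i}$ precise enough to reproduce the Lemma \ref{lem:pseudo_critical} estimate verbatim; every other step is a direct application of the lemmas already established.
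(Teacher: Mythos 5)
Your proposal is correct and follows essentially the same route as the paper: the paper likewise invokes the differentiated identity \eqref{eqn:aux:17} to cancel the $O(1)$ local terms, reducing $\Dif^2_{U_{\varepsilon, z}} J_\varepsilon \brackets{\dot{U}_{\varepsilon, z, i}, w}$ to the $(V_\varepsilon - V_\varepsilon(z))$ term, the $\varepsilon \partial_i V(\varepsilon z)\int U_{\varepsilon,z} w$ term, and the two $\phi_\varepsilon$ terms, each estimated exactly as you describe via the Lemma \ref{lem:pseudo_critical} argument, Lemma \ref{lem:bound_phi_eps} and Corollary \ref{cor:estimate_Dif_phi_eps}. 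The only cosmetic difference is that the paper groups $\int \phi_\varepsilon(U_{\varepsilon,z})\dot{U}_{\varepsilon,z,i} w$ with the potential term rather than with the nonlocal part, which changes nothing.
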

\begin{proof}
Due to \eqref{eqn:aux:17}, we obtain
\begin{multline*}
\Dif^2_{U_{\varepsilon, z}} J_\varepsilon \brackets{\dot{U}_{\varepsilon, z, i}, w}
=
\int \left(
	(V_\varepsilon - V_\varepsilon \parens{z}) \dot{U}_{\varepsilon, z, i} w
	+
	\phi_\varepsilon \parens{U_{\varepsilon, z}} \dot{U}_{\varepsilon, z, i} w
\right)
+
\\
-
\varepsilon \partial_i V \parens{\varepsilon z} \int U_{\varepsilon, z} w
+
\int 
	\Dif_{U_{\varepsilon, z}} \phi_\varepsilon \brackets{
		\dot{U}_{\varepsilon, z, i}} U_{\varepsilon, z
	}
	w
.
\end{multline*}
By arguing as in the proof of Lemma \ref{lem:pseudo_critical}, we obtain
\[
\abs*{
\int \Big(
	(V_\varepsilon - V_\varepsilon \parens{z}) \dot{U}_{\varepsilon, z, i} w
	+
	\phi_\varepsilon \parens{U_{\varepsilon, z}} \dot{U}_{\varepsilon, z, i} w
\Big)
}
\lesssim
\left( \varepsilon \abs{\nabla V \parens{\varepsilon z}} + \varepsilon^2\right)
\norm{w}_{H^1}.
\]
Clearly,
\[
\varepsilon \abs*{
	\partial_i V \parens{\varepsilon z} \int U_{\varepsilon, z} w
}
\lesssim
\varepsilon \abs{\nabla V \parens{\varepsilon z}} \norm{w}_{H^1}.
\]
To finish, Corollary \ref{cor:estimate_Dif_phi_eps} implies
\[
\abs*{\int 
	\Dif_{U_{\varepsilon, z}} \phi_\varepsilon \brackets{
		\dot{U}_{\varepsilon, z, i}} U_{\varepsilon, z
	}
	w
}
\lesssim
\varepsilon^2 \norm{w}_{H^1},
\]
hence the result.
\end{proof}

Let us finally prove Lemma \ref{lem:soln_aux}.

\begin{proof}[Proof of Lemma \ref{lem:soln_aux}]
Fix $\varepsilon_0 \in \ooi{0, 1}$ for which the conclusions of Lemmas \ref{lem:bound_inverse_L}, \ref{lem:Banach_fixed_point} hold and let $\bar{C} \in \ooi{0, \infty}$ be as in Lemma \ref{lem:Banach_fixed_point}. Let
$
	\mathcal{H} \colon \mathcal{O}_{\varepsilon_0, \bar{C}} \to H^1 \times \real^3
$
be the mapping of class $C^1$ given by
\[
\mathcal{H}\parens{\varepsilon, z, w, \alpha}
=
\begin{pmatrix}
	\nabla  J_\varepsilon (U_{\varepsilon, z}+w )
	-
	\sum_{i=1}^{3} \alpha_i \dot{U}_{\varepsilon, z, i}
	\smallskip \\
	\sum_{i=1}^{3} \angles{w \mid \dot{U}_{\varepsilon, z, i}}_{H^1} e_i
\end{pmatrix},
\]
so that
\[
\Pi_{\varepsilon, z} \brackets{\nabla J_\varepsilon \parens{U_{\varepsilon, z} + w}} = 0
\quad\text{and}\quad
w \in W_{\varepsilon, z}
\]
if, and only if,
$\mathcal{H} \parens{\varepsilon, z, w, \alpha} = 0$
for a certain $\alpha \in \real^3$, where
\[
\mathcal{O}_{\varepsilon_0, \bar{C}} := \left\{
	\parens{\varepsilon, z, w, \alpha}
	\in
	\coi{0, \varepsilon_0} \times \real^3 \times H^1 \times \real^3:
	\norm{w}_{H^1}
	\leq
	\bar{C} (\varepsilon \abs{\nabla V \parens{\varepsilon z}} + \varepsilon^2)
\right\}.
\]

\emph{A preliminary result.}
Let us prove that, up to shrinking $\varepsilon_0$,
\[
H^1 \times \real^3 \ni \parens{u, \beta}
\mapsto
\Dif_{\parens{w, \alpha}} \mathcal{H}_{\varepsilon, z} \brackets{u, \beta}
:=
\Dif_{\parens{\varepsilon, z, w, \alpha}} \mathcal{H} \brackets{0, 0, u, \beta}
\in
H^1 \times \real^3
\]
is invertible and
\[
\norm*{
	\Dif_{\parens{w,\alpha}} \mathcal{H}_{\varepsilon, z} \brackets{u, \beta}
}_{H^1 \times \real^3}
\gtrsim
\norm{u}_{H^1} + \abs{\beta}
\]
for every
$
	\parens{\varepsilon, z, w, \alpha} \in \mathcal{O}_{\varepsilon_0, \bar{C}}
$
and $\parens{u, \beta} \in H^1 \times \real^3$. Indeed, it follows from Lemma \ref{lem:approx_der_J} that
\begin{eqnarray*}
\norm*{
\Dif_{\parens{w,\alpha}} \mathcal{H}_{\varepsilon, z} \brackets{u, \beta}
-
\begin{pmatrix}
	A_{\varepsilon, z} \brackets{u}
	-
	\sum_{i=1}^{3} \beta_i \dot{U}_{\varepsilon, z, i}
	\medskip \\
	\sum_{i=1}^{3} \angles{u \mid \dot{U}_{\varepsilon, z, i}}_{H^1} e_i
	\end{pmatrix}
}_{H^1 \times \real^3}
&\lesssim&
\parens*{
	\norm{w}_{H^1}
	+
	\norm{w}^{p-1}_{H^1}
	+
	\varepsilon^2
}
\norm{u}_{H^1} \\
&\lesssim &
\left(
	\varepsilon \abs{\nabla V \parens{\varepsilon z} + \varepsilon^2}
\right)^\mu
\norm{u}_{H^1}.
\end{eqnarray*}
Equivalently,
\begin{multline*}
\norm*{
	\Dif_{\parens{w,\alpha}} \mathcal{H}_{\varepsilon, z} \brackets{u, \beta}
	-
	S
	\begin{pmatrix}
		L_{\varepsilon, z} \circ \Pi_{\varepsilon, z} \brackets{u}
	\medskip	\\
		-\sum_{i=1}^{3} \beta_i \dot{U}_{\varepsilon, z, i}
	\medskip	\\
		\sum_{i=1}^{3}
		\angles{
			\parens{\id_{H^1} - \Pi_{\varepsilon, z}} \brackets{u} \mid \dot{U}_{\varepsilon, z, i}
		}_{H^1} e_i
	\end{pmatrix}
}_{H^1 \times \real^3}
\lesssim
\\
\lesssim
\Big(
	\varepsilon \abs{\nabla V \parens{\varepsilon z} + \varepsilon^2}
\Big)^\mu
\norm{u}_{H^1}
\end{multline*}
where
$S \colon H^1 \times H^1 \times \real^3 \to H^1 \times \real^3$
is defined as
$S \parens{u_1, u_2, \alpha} = \parens{u_1 + u_2, \alpha}$.
At this point, the preliminary result follows from Lemma \ref{lem:bound_inverse_L}.

\emph{The mapping \eqref{eqn:map_w}, its regularity and estimation of $\norm{w_{\varepsilon, z}}_{H^1}$.}
Considering the preliminary result and the fact that
$\mathcal{H} \parens{0, \cdot, 0, 0} \equiv 0$,
the Implicit Function Theorem furnishes an application of class $C^1$,
\[
\coi{0, \varepsilon_0} \times \real^3
\ni
\parens{\varepsilon, z}
\mapsto
\parens{w_{\varepsilon, z}, \alpha_{\varepsilon, z}}
\in
H^1 \times \real^3,
\]
such that
\begin{equation}\label{Equation:IFT}
	\mathcal{H} \parens{
		\varepsilon, z, w_{\varepsilon, z}, \alpha_{\varepsilon, z}
	}
	=
	0
\end{equation}
for every
$\parens{\varepsilon, z} \in \coi{0, \varepsilon_0} \times \real^3$
and $\alpha_{0, z} = w_{0, z} = 0$ for every $z \in \real^3$.
It follows from Lemma \ref{lem:Banach_fixed_point} that
$
\parens{\varepsilon, z, w_{\varepsilon, z}, \alpha_{\varepsilon, z}}
\in
\mathcal{O}_{\varepsilon_0, \bar{C}}
$,
hence the estimate on $\norm{w_{\varepsilon, z}}_{H^1}$.

\emph{Estimation of $\norm{\dot{w}_{\varepsilon, z, i}}_{H^1}$.} 
Taking the $i$\textsuperscript{th} partial derivative of \eqref{Equation:IFT} with respect to $z$, we deduce that
\[
\partial_i\mathcal{H}_{
	\varepsilon, w_{\varepsilon, z}, \alpha_{\varepsilon, z}
}
\parens{z}
+
\Dif_{\parens{w_{\varepsilon, z}, \alpha_{\varepsilon, z}}}
\mathcal{H}_{\varepsilon, z} \brackets{
	\dot{w}_{\varepsilon, z, i}, \dot{\alpha}_{\varepsilon, z, i}
}
=
0
\]
It follows from the preliminary result that
\begin{multline*}
	\norm{\dot{w}_{\varepsilon, z, i}}_{H^1}
	\leq
	\norm*{
		\left(
			\Dif_{\parens{w_{\varepsilon, z}, \alpha_{\varepsilon, z}}}
			\mathcal{H}_{\varepsilon, z}
		\right)^{-1}
		\brackets*{
			\partial_i\mathcal{H}_{
				\varepsilon, w_{\varepsilon, z}, \alpha_{\varepsilon, z}
			}
			\parens{z}
		}
	}_{H^1 \times \real^3}
	\lesssim
	\norm*{
		\partial_i\mathcal{H}_{
			\varepsilon, w_{\varepsilon, z}, \alpha_{\varepsilon, z}
		}
		\parens{z}
	}_{H^1 \times \real^3}.
\end{multline*}
Clearly,
\begin{eqnarray*}
	\norm*{
		\partial_i\mathcal{H}_{
			\varepsilon, w_{\varepsilon, z}, \alpha_{\varepsilon, z}
		}
		\parens{z}
	}_{H^1 \times \real^3}
	&=&
	\norm*{
		\begin{pmatrix}
			R \circ \Dif^2_{U_{\varepsilon, z}+w_{\varepsilon, z}}
			J_\varepsilon \brackets{\dot{U}_{\varepsilon, z, i}, \cdot}
			-
			\sum_{ j=1}^{3}
			\alpha_{\varepsilon, z, j} \ddot{U}_{z, i, j}
			\\
			\sum_{ j=1}^{3}
			\angles{
				w_{\varepsilon, z} \mid \ddot{U}_{z, i, j}
			}_{H^1}
			e_j
		\end{pmatrix}
	}_{H^1 \times \real^3} \\
	&\lesssim&
	\norm*{
		\Dif^2_{\parens{
			U_{\varepsilon, z}+w_{\varepsilon, z}
		}}
		J_\varepsilon \parens{\dot{U}_{\varepsilon, z, i}, \cdot}
	}_{H^{-1}}
	+
	\abs{\alpha_{\varepsilon, z}}
	+
	\norm{w_{\varepsilon, z}}_{H^1}.
\end{eqnarray*}
Lemmas \ref{lem:approx_der_J} and \ref{lem:estimate_D^2J_eps} imply
\begin{eqnarray*}
\norm*{
	\Dif^2_{U_{\varepsilon, z} + w_{\varepsilon, z}}
	J_\varepsilon \brackets{\dot{U}_{\varepsilon, z, i}, \cdot}
}_{H^{-1}}
&\leq&
\norm*{
	\Dif^2_{U_{\varepsilon, z} + w_{\varepsilon, z}}
	J_\varepsilon \brackets{\dot{U}_{\varepsilon, z, i}, \cdot}
	-
	\Dif^2_{U_{\varepsilon, z}} J_\varepsilon \brackets{\dot{U}_{\varepsilon, z, i}, \cdot}
}_{H^{-1}}
+
\norm*{
	\Dif^2_{U_{\varepsilon, z}} J_\varepsilon \brackets{\dot{U}_{\varepsilon, z, i}, \cdot}
}_{H^{-1}} \\
&\lesssim&
\left(\varepsilon \abs{\nabla V \parens{\varepsilon z}} + \varepsilon^2\right)^\mu.
\end{eqnarray*}
In view of \eqref{Equation:IFT}, Lemmas \ref{lem:pseudo_critical} and \ref{lem:tangent}, we conclude that
$
\abs{\alpha_{\varepsilon, z}}
\lesssim
\varepsilon \abs{\nabla V \parens{\varepsilon z}} + \varepsilon^2
$.
\end{proof}

\subsection*{The reduced functional}

Let $\varepsilon_0 > 0$ be furnished by Lemma \ref{lem:soln_aux} and fix $\varepsilon \in \ooi{0, \varepsilon_0}$. In this context, we can define the \emph{reduced functional}
$\widetilde{J}_\varepsilon \colon \real^3 \to \real$
as
\[
\widetilde{J}_\varepsilon \parens{z}
=
J_{\varepsilon} \parens{U_{\varepsilon, z} + w_{\varepsilon, z}}.
\]
We remark that $\widetilde{J}_\varepsilon \in C^1 \parens{\real^3}$ as a composition of mappings of class $C^1$. In fact, it suffices to look for critical points of
$\widetilde{J}_\varepsilon$
to obtain critical points of $J_\varepsilon$.
\begin{lem} \label{lem:natural_constraint}
Up to shrinking $\varepsilon_0$, the following implication holds:
\[
\nabla \widetilde{J}_\varepsilon \parens{z} = 0
\implies
\nabla J_\varepsilon \parens{U_{\varepsilon, z} + w_{\varepsilon, z}} = 0.
\]
\end{lem}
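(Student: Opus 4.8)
The plan is to exploit that, by construction, $\nabla J_\varepsilon(U_{\varepsilon,z}+w_{\varepsilon,z})$ already lies in the three-dimensional tangent space $\Tan_{U_{\varepsilon,z}}Z_\varepsilon$, so the implication reduces to inverting a $3\times 3$ matrix that is a small perturbation of a nondegenerate one. First I would read off the first component of the identity $\mathcal H(\varepsilon,z,w_{\varepsilon,z},\alpha_{\varepsilon,z})=0$ from \eqref{Equation:IFT}, which says exactly that
\[
\nabla J_\varepsilon(U_{\varepsilon,z}+w_{\varepsilon,z})=\sum_{j=1}^3 \alpha_{\varepsilon,z,j}\,\dot U_{\varepsilon,z,j}.
\]
Thus the conclusion $\nabla J_\varepsilon(U_{\varepsilon,z}+w_{\varepsilon,z})=0$ is equivalent to $\alpha_{\varepsilon,z}=0$, and the whole lemma becomes a statement about the coefficients $\alpha_{\varepsilon,z}$.

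Next I would differentiate the reduced functional. Since $z\mapsto U_{\varepsilon,z}+w_{\varepsilon,z}$ is of class $C^1$ into $H^1$ by Lemma \ref{lem:soln_aux}, with $\partial_i(U_{\varepsilon,z}+w_{\varepsilon,z})=\dot U_{\varepsilon,z,i}+\dot w_{\varepsilon,z,i}$, the chain rule and the definition of the gradient give
\[
\partial_i\widetilde J_\varepsilon(z)=\angles*{\nabla J_\varepsilon(U_{\varepsilon,z}+w_{\varepsilon,z})\mid \dot U_{\varepsilon,z,i}+\dot w_{\varepsilon,z,i}}_{H^1}=\sum_{j=1}^3 \alpha_{\varepsilon,z,j}\,M_{ij},
\]
where $M_{ij}:=\angles{\dot U_{\varepsilon,z,j}\mid \dot U_{\varepsilon,z,i}+\dot w_{\varepsilon,z,i}}_{H^1}$ and the last equality uses the first step. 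Hence $\nabla\widetilde J_\varepsilon(z)=0$ reads $M\alpha_{\varepsilon,z}=0$, and everything hinges on showing that $M$ is invertible once $\varepsilon_0$ is small enough.

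The main obstacle is therefore establishing that $M$ is (uniformly in $z$) diagonally dominant. The diagonal entries $\angles{\dot U_{\varepsilon,z,i}\mid\dot U_{\varepsilon,z,i}}_{H^1}$ are bounded away from $0$ uniformly in $z$ by \eqref{eqn:aux:21} and \eqref{eqn:aux:22}, while the off-diagonal products $\angles{\dot U_{\varepsilon,z,j}\mid\dot U_{\varepsilon,z,i}}_{H^1}$ with $i\neq j$ are $O(\varepsilon)$ by \eqref{eqn:aux:23}. The delicate terms are the mixed contributions $\angles{\dot U_{\varepsilon,z,j}\mid \dot w_{\varepsilon,z,i}}_{H^1}$; rather than rely on the crude bound on $\norm{\dot w_{\varepsilon,z,i}}_{H^1}$ from Lemma \ref{lem:soln_aux}, I would differentiate in $z_i$ the defining orthogonality relation $\angles{w_{\varepsilon,z}\mid \dot U_{\varepsilon,z,j}}_{H^1}=0$ (valid because $w_{\varepsilon,z}\in W_{\varepsilon,z}$), obtaining
\[
\angles{\dot w_{\varepsilon,z,i}\mid \dot U_{\varepsilon,z,j}}_{H^1}=-\angles{w_{\varepsilon,z}\mid \ddot U_{z,i,j}}_{H^1},
\]
and then estimate the right-hand side by $\norm{w_{\varepsilon,z}}_{H^1}\,\norm{\ddot U_{z,i,j}}_{H^1}\lesssim \varepsilon\abs{\nabla V(\varepsilon z)}+\varepsilon^2$, using the bound on $\norm{w_{\varepsilon,z}}_{H^1}$ together with the uniform boundedness of the second $z$-derivatives of $U_{\varepsilon,z}$ (established exactly as the first-derivative bounds in Lemma \ref{lem:tangent}). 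Since $\abs{\nabla V}$ is bounded by (\nameref{V_1}), all off-diagonal and mixed terms are $O(\varepsilon)$, so $M=D+O(\varepsilon)$ with $D$ diagonal and uniformly invertible. Shrinking $\varepsilon_0$ then makes $M$ invertible uniformly in $z$, whence $M\alpha_{\varepsilon,z}=0$ forces $\alpha_{\varepsilon,z}=0$; by the first step this is precisely $\nabla J_\varepsilon(U_{\varepsilon,z}+w_{\varepsilon,z})=0$, which completes the argument.
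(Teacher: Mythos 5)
Your proof is correct and follows essentially the same route as the paper: identify $\nabla J_\varepsilon(U_{\varepsilon,z}+w_{\varepsilon,z})$ as a combination of the $\dot U_{\varepsilon,z,j}$, reduce $\nabla\widetilde J_\varepsilon(z)=0$ to $M_{\varepsilon,z}\alpha_{\varepsilon,z}=0$, and show $M_{\varepsilon,z}$ is nonsingular for small $\varepsilon$ via Lemmas \ref{lem:tangent} and \ref{lem:soln_aux}. Your refinement for the mixed terms (differentiating the orthogonality relation) is a nice touch but not needed, since the crude bound $\norm{\dot w_{\varepsilon,z,i}}_{H^1}\lesssim\varepsilon^\mu$ already makes those entries uniformly small.
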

\begin{proof}
It follows from Lemma \ref{lem:soln_aux} that
\[
\nabla J_\varepsilon \parens{U_{\varepsilon, z} + w_{\varepsilon, z}}
=
\sum_{1 \leq i \leq 3}
c_{\varepsilon, z, i} \dot{U}_{\varepsilon, z, i}
\in
\Tan_{U_{\varepsilon, z} + w_{\varepsilon, z}} Z_\varepsilon.
\]
for a certain
$
c_{\varepsilon, z} := \parens{
	c_{\varepsilon, z, 1}, c_{\varepsilon, z, 2}, c_{\varepsilon, z, 3}
}
\in
\real^3
$,
and thus
\begin{align*}
\partial_i \widetilde{J}_\varepsilon \parens{z}
&=
\angles{
	\dot{U}_{\varepsilon, z, i} + \dot{w}_{\varepsilon, z, i}
	\mid
	\nabla J_\varepsilon \parens{U_{\varepsilon, z} + w_{\varepsilon, z}}
}_{H^1}
\\
&=
\sum_{1 \leq j \leq 3}
c_{\varepsilon, z, j}
\angles{
	\dot{U}_{\varepsilon, z, i} + \dot{w}_{\varepsilon, z, i}
	\mid
	\dot{U}_{z, j}
}_{H^1}
\end{align*}
for every $i \in \set{1, 2, 3}$. Equivalently,
\[
M_{\varepsilon, z}c_{\varepsilon, z}
=
0,
~\text{where}~
M_{\varepsilon, z} := \parens*{\angles{
	\dot{U}_{\varepsilon, z, i} + \dot{w}_{\varepsilon, z, i}
	\mid
	\dot{U}_{z, j}
}_{H^1}}_{1 \leq i, j \leq 3}.
\]
It follows from Lemmas \ref{lem:tangent} and \ref{lem:soln_aux} that, up to shrinking $\varepsilon_0$, $M_{\varepsilon, z}$ becomes non-singular and thus
$c_{\varepsilon, z} = 0$.
\end{proof}

\section{Proof of main result} \label{sect:mult}

We still need three preliminary results to prove Theorem \ref{thm:mult}. The first result is the proposition that follows, which is analogous to \cite[Theorem 5.1]{IV08} and may be proved accordingly.

\begin{prop} \label{prop:conc}
Suppose that $x_0 \in \real^3$, $\varepsilon_0 \in \ooi{0, 1}$ and
\[
\set*{
	u_\varepsilon \in H^1:
	\nabla J_\varepsilon \parens{u_\varepsilon} = 0
}_{\varepsilon \in \ooi{0,\varepsilon_0}}
\]
is concentrated around $x_0$. Then  $\nabla V \parens{x_0}=0$.
\end{prop}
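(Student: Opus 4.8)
The plan is to extract a translation-type (Pohozaev) identity from the critical point equation and to read off $\nabla V(x_0)=0$ by letting $\varepsilon\to 0^+$. Since $\nabla J_\varepsilon(u_\varepsilon)=0$, Proposition \ref{prop:variational} tells us that $(u_\varepsilon,\phi_\varepsilon(u_\varepsilon))$ is a weak solution of \eqref{QLSP'_epsilon}, so $u_\varepsilon$ satisfies $-\Delta u_\varepsilon+V_\varepsilon u_\varepsilon+\phi_\varepsilon(u_\varepsilon)u_\varepsilon=u_\varepsilon\abs{u_\varepsilon}^{p-1}$. After establishing that $u_\varepsilon$ is regular enough (by elliptic bootstrap, using the exponential decay forced by concentration), I would test this equation with $\partial_i u_\varepsilon$ for each $i\in\set{1,2,3}$. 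The kinetic term $\int(-\Delta u_\varepsilon)\partial_i u_\varepsilon$ and the nonlinear term $\int u_\varepsilon\abs{u_\varepsilon}^{p-1}\partial_i u_\varepsilon=\frac{1}{p+1}\int\partial_i(\abs{u_\varepsilon}^{p+1})$ are integrals of pure divergences and therefore vanish; writing $V_\varepsilon u_\varepsilon\partial_i u_\varepsilon=\frac12 V_\varepsilon\partial_i(u_\varepsilon^2)$ and $\phi_\varepsilon(u_\varepsilon)u_\varepsilon\partial_i u_\varepsilon=\frac12\phi_\varepsilon(u_\varepsilon)\partial_i(u_\varepsilon^2)$ and integrating by parts, I obtain
\[
\varepsilon\int(\partial_i V)(\varepsilon x)\,u_\varepsilon^2 = -\int\parens{\partial_i\phi_\varepsilon(u_\varepsilon)}u_\varepsilon^2.
\]

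The next step is to bound the nonlocal right-hand side. Integrating by parts once more, $\int\parens{\partial_i\phi_\varepsilon(u_\varepsilon)}u_\varepsilon^2=-2\int\phi_\varepsilon(u_\varepsilon)u_\varepsilon\partial_i u_\varepsilon$, which by Hölder and the Sobolev embeddings is controlled by $\norm{\phi_\varepsilon(u_\varepsilon)}_{\mathcal{D}^{1,2}}\norm{u_\varepsilon}_{L^3}\norm{u_\varepsilon}_{H^1}$. By Lemma \ref{lem:bound_phi_eps}, $\norm{\phi_\varepsilon(u_\varepsilon)}_{\mathcal{D}^{1,2}}\lesssim\varepsilon^2\norm{u_\varepsilon}_{L^{12/5}}^2$, and since concentration keeps $\norm{u_\varepsilon}_{H^1}$ bounded, the right-hand side is $O(\varepsilon^2)$. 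Dividing the identity by $\varepsilon\int u_\varepsilon^2$ then gives
\[
\frac{\int(\partial_i V)(\varepsilon x)\,u_\varepsilon^2}{\int u_\varepsilon^2}=O(\varepsilon)\xrightarrow[\varepsilon\to0^+]{}0.
\]

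Finally, I would identify the limit of the left-hand side using the concentration hypothesis. Writing $u_\varepsilon=U_{\varepsilon,z_\varepsilon}+o(1)$ in $H^1$ with $\varepsilon z_\varepsilon\to x_0$, the probability densities $u_\varepsilon^2/\int u_\varepsilon^2$ concentrate around $z_\varepsilon$; since $(\partial_i V)(\varepsilon x)\to(\partial_i V)(x_0)$ on the support of the rescaled profile (by continuity of $\partial_i V$ together with $\varepsilon z_\varepsilon\to x_0$), a dominated convergence argument—using the exponential decay of $U$ and $\norm{V}_{C^2}<\infty$—shows that this weighted average converges to $(\partial_i V)(x_0)$. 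Comparing with the previous display yields $(\partial_i V)(x_0)=0$ for every $i$, i.e. $\nabla V(x_0)=0$.

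The main obstacle I expect is twofold, and both parts concern the nonlocal potential $\phi_\varepsilon$: first, justifying the regularity needed to test with $\partial_i u_\varepsilon$ and to integrate by parts rigorously for the quasilinear Poisson term, where no explicit representation of $\phi_\varepsilon$ is available and one must argue solely from Lemma \ref{lem:bound_phi_eps} and the abstract properties of the solution operator; and second, controlling $\int\parens{\partial_i\phi_\varepsilon(u_\varepsilon)}u_\varepsilon^2$ sharply enough to ensure it is genuinely of lower order than $\varepsilon\int u_\varepsilon^2$. The remaining ingredients—the divergence cancellations and the concentration-of-mass limit—are standard.
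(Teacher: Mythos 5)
Your proposal is correct and is, in substance, the argument the paper delegates to the literature: the paper gives no proof of Proposition \ref{prop:conc}, stating only that it ``is analogous to [IV08, Theorem 5.1] and may be proved accordingly,'' and the proof in that reference is exactly the translation (Pohozaev-type) identity you derive by testing the equation with $\partial_i u_\varepsilon$ and passing to the limit in the resulting weighted average of $\partial_i V(\varepsilon\,\cdot)$. The one point where the quasilinear setting forces a genuine deviation from Ianni--Vaira is the nonlocal term $\int \partial_i\phi_\varepsilon(u_\varepsilon)\,u_\varepsilon^2$: in the Schr\"odinger--Poisson case this vanishes identically by the antisymmetry of $\nabla\bigl(|x|^{-1}\bigr)$ under $x\leftrightarrow y$, whereas here no kernel representation is available. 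Your replacement --- integrating by parts once more and invoking Lemma \ref{lem:bound_phi_eps} to get an $O(\varepsilon^2)$ bound, which is of lower order than the $\varepsilon\int u_\varepsilon^2\gtrsim\varepsilon$ main term --- is precisely the right adaptation and is in the spirit of the paper's stated philosophy of working only with abstract properties of the solution operator. The remaining technical points you flag (regularity/decay to justify testing with $\partial_i u_\varepsilon$, boundedness of $\|u_\varepsilon\|_{H^1}$ and of $\int u_\varepsilon^2$ from below, and the concentration-of-mass limit using the uniform continuity of $\nabla V$ from (\nameref{V_1})) are all standard and all available from the concentration hypothesis; no gap.
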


The second result is \cite[Theorem 6.4]{C1993}, which we state below.
\begin{thm}\label{thm:approx_C1}
Let $f \in C^1 \parens{\real^3}$, $\mathcal{M}$ be a compact non-degenerate critical manifold of $f$ and $\mathcal{N}$ be a neighborhood of $\mathcal{M}$. We conclude that there exists $\delta > 0$ such that if $\norm{g}_{C^1 \parens{\mathcal{N}}} < \delta$, then $f+g$ has at least $\cupl\parens{\mathcal{M}}+1$ critical points in $\mathcal{N}$.
\end{thm}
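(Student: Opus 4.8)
The plan is to exploit the non-degeneracy of $\mathcal{M}$ in order to reduce the problem of finding critical points of $f + g$ near $\mathcal{M}$ to that of finding critical points of a function defined on the compact manifold $\mathcal{M}$ itself, and then to estimate the number of the latter from below by the cup-length.

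First I would fix a tubular neighborhood of $\mathcal{M}$. Since $\mathcal{M}$ is a compact submanifold of $\real^3$, there is a diffeomorphism $\Psi$ from a neighborhood of the zero section of the normal bundle $N\mathcal{M}$ onto a neighborhood of $\mathcal{M}$ contained in $\mathcal{N}$, so that each nearby point is written uniquely as $\Psi(m, \nu)$ with $m \in \mathcal{M}$ and $\nu \in N_m\mathcal{M}$ small. The hypothesis $\Tan_m \mathcal{M} = \ker \Dif^2_m f$ says precisely that $\Dif^2_m f$ restricted to $N_m\mathcal{M}$ is an isomorphism, and by compactness of $\mathcal{M}$ this holds uniformly in $m$. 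I would then solve the normal component of the equation $\nabla(f + g) = 0$: the normal projection of the gradient vanishes at $\nu = 0$, $g = 0$ for every $m$, and its derivative in $\nu$ there is the uniformly invertible normal Hessian, so a contraction or implicit-function argument, uniform in $m \in \mathcal{M}$, furnishes for each small $g$ a small solution $\nu = \nu_g(m)$. Substituting back produces a reduced function $\Phi_g(m) := (f + g)\bigl(\Psi(m, \nu_g(m))\bigr)$ on $\mathcal{M}$, and the normal equation is arranged so that $m$ is a critical point of $\Phi_g$ exactly when $\Psi(m, \nu_g(m))$ is a critical point of $f + g$; this is the natural-constraint mechanism already seen in Lemma \ref{lem:natural_constraint}.

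With the problem transported to the compact manifold $\mathcal{M}$, I would invoke the Lusternik--Schnirelmann cup-length estimate: a $C^1$ function on a compact manifold has at least $\cupl(\mathcal{M}) + 1$ critical points. Each critical point of $\Phi_g$ lifts through the correspondence above to a critical point of $f + g$, and for $\delta$ small these all lie inside $\mathcal{N}$, which yields the claimed count.

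The main obstacle is the regularity bookkeeping. The notion of non-degenerate critical manifold presupposes $f \in C^2$, whereas the perturbation $g$ is only $C^1$, so $\nabla(f + g)$ is merely continuous and the classical implicit function theorem does not immediately produce a reduced function $\Phi_g$ of class $C^1$, which is what the cup-length estimate requires. I would circumvent this either by a quantitative contraction argument yielding a continuous $\nu_g$ together with the critical-point correspondence (which is all a category or cup-length count actually needs), or, more robustly, by a Conley-index continuation argument: $\mathcal{M}$ admits an isolating neighborhood $\mathcal{U} \subset \mathcal{N}$ for the negative gradient flow of $f$, this $\mathcal{U}$ remains isolating for the flow of $f + g$ once $\norm{g}_{C^1(\mathcal{N})} < \delta$, and the continuation invariance of the Conley index together with the cup-length form of the Morse inequalities forces at least $\cupl(\mathcal{M}) + 1$ rest points of $\nabla(f + g)$ in $\mathcal{U}$. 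It remains to check that the solutions so obtained do not escape $\mathcal{N}$ and that distinct critical points of $\Phi_g$ give distinct critical points of $f + g$.
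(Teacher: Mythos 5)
First, a point of comparison that matters here: the paper does not prove this statement at all. It is quoted verbatim as \cite[Theorem 6.4]{C1993} and used as a black box, so there is no in-paper argument to measure your proposal against. Judged on its own, your second route (Conley index) is essentially the standard proof and, as far as I recall, close to Chang's own: $\mathcal{M}$ is an isolated invariant set for the negative gradient flow of $f$, an isolating neighborhood persists under a small $C^1$ perturbation by continuation invariance, and the cup-length form of the Morse inequalities for the resulting isolated invariant set yields at least $\cupl(\mathcal{M})+1$ rest points, all inside the isolating neighborhood and hence inside $\mathcal{N}$. That branch of your argument is sound and would close the proof.

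The first route, as written, has a genuine gap which you partly acknowledge but do not actually repair. With $g$ only of class $C^1$, the map $(m,\nu)\mapsto$ (normal component of $\nabla(f+g)$ at $\Psi(m,\nu)$) is only continuous in $\nu$, so the implicit function theorem is unavailable and a contraction argument yields at best a continuous section $\nu_g$. But then $\Phi_g(m)=(f+g)(\Psi(m,\nu_g(m)))$ is merely continuous: the phrase ``critical point of $\Phi_g$'' is undefined, and the natural-constraint equivalence you invoke (the analogue of Lemma \ref{lem:natural_constraint}) is proved precisely by differentiating $\Phi_g$ along $\mathcal{M}$ and using that the normal component of the gradient vanishes --- a computation you cannot perform without a differentiable $\nu_g$. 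Saying that continuity ``is all a cup-length count actually needs'' is not correct: the Lusternik--Schnirelmann estimate $\#\{\text{critical points}\}\geq\cat(\mathcal{M})\geq\cupl(\mathcal{M})+1$ is a statement about $C^1$ functions on $\mathcal{M}$. So either you must upgrade the reduction (e.g., assume $g\in C^2$ and lose the generality the paper needs, since it applies the theorem to $g=\widetilde{J}_\varepsilon(\cdot/\varepsilon)-C_0V^\theta$, which is only known to be $C^1$), or you must fall back on the Conley-index/continuation argument --- which is then not a ``more robust alternative'' but the actual proof. A minor further remark: the statement takes $f\in C^1$ while Definition \ref{defn:crit_mfd} presupposes $f\in C^2$; your reading (non-degeneracy of the normal Hessian, uniform by compactness) is the intended one.
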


Our last preliminary result is the following expansion of $\widetilde{J}_\varepsilon$, which is analogous to \cite[Lemma 8.11]{AM06}.
Define
\[
	C_0:=\parens*{\frac{1}{2}-\frac{1}{p+1}}\norm{U}_{L^{p+1}}^{p+1}\,,
	\quad
	\theta:=\frac{p+1}{p-1}-\frac{3}{2}.
\]

\begin{lem} \label{lem:expansion}
We have
\[
\abs*{\widetilde{J}_\varepsilon \parens{z} - C_0 V \parens{\varepsilon z}^\theta}
\lesssim
\varepsilon \abs{\nabla V \parens{\varepsilon z}} + \varepsilon^2;
\]
\[
\abs*{
	\nabla \widetilde{J}_\varepsilon \parens{z}
	-
	\varepsilon a \parens{\varepsilon z} \nabla V \parens{\varepsilon z}
}
\lesssim
\varepsilon^{1 + \mu}
\abs{\nabla V \parens{\varepsilon z}}^{1 + \mu}
+
\varepsilon^2
\]
for every
$\parens{\varepsilon,z} \in \ooi{0, \varepsilon_0} \times \real^3$,
where
$
a \parens{\varepsilon z}
:=
\theta C_0 V \parens{\varepsilon z}^{\theta - 1}
$.
\end{lem}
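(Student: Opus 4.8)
The plan is to follow the scheme of \cite[Lemma 8.11]{AM06}, exploiting the \emph{ansatz} $u = U_{\varepsilon, z} + w_{\varepsilon, z}$ together with the estimates already established for $w_{\varepsilon, z}$, its $z$-derivatives, and the derivatives of $J_\varepsilon$. Throughout I abbreviate $\eta_{\varepsilon, z} := \varepsilon \abs{\nabla V \parens{\varepsilon z}} + \varepsilon^2$ and recall $\eta_{\varepsilon, z} \lesssim \varepsilon$ by (\nameref{V_1}), (\nameref{V_2}). For the \textbf{value estimate} I would Taylor-expand $\widetilde{J}_\varepsilon \parens{z} = J_\varepsilon \parens{U_{\varepsilon, z} + w_{\varepsilon, z}}$ around $U_{\varepsilon, z}$: the first-order correction is $\abs{\Dif_{U_{\varepsilon, z}} J_\varepsilon \brackets{w_{\varepsilon, z}}} \leq \norm{\nabla J_\varepsilon \parens{U_{\varepsilon, z}}}_{H^1} \norm{w_{\varepsilon, z}}_{H^1}$, which Lemmas \ref{lem:pseudo_critical} and \ref{lem:soln_aux} bound by $\eta_{\varepsilon, z}^2$, while the integral remainder is controlled through \eqref{eqn:aux:10}; both are $\lesssim \eta_{\varepsilon, z}$. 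It then remains to expand $J_\varepsilon \parens{U_{\varepsilon, z}}$. Using the decomposition of $J_\varepsilon$ recorded in the proof of Lemma \ref{lem:pseudo_critical}, the leading term is $\overline{I}_{\lambda_{\varepsilon z}} \parens{U_{\varepsilon, z}}$; since $U_{\varepsilon, z}$ is a critical point of $\overline{I}_{\lambda_{\varepsilon z}}$, the Nehari identity gives $\overline{I}_{\lambda_{\varepsilon z}} \parens{U_{\varepsilon, z}} = \parens*{\tfrac12 - \tfrac{1}{p + 1}} \norm{U_{\varepsilon, z}}_{L^{p + 1}}^{p + 1}$, and the scaling identity $\norm{U_{\varepsilon, z}}_{L^{p + 1}}^{p + 1} = V \parens{\varepsilon z}^\theta \norm{U}_{L^{p + 1}}^{p + 1}$ identifies this with $C_0 V \parens{\varepsilon z}^\theta$. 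The three remaining terms are $\lesssim \eta_{\varepsilon, z}$ exactly as in Lemma \ref{lem:pseudo_critical} (the potential term from the Taylor expansion of $V$ and the decay of $U$, the two $\phi_\varepsilon$-terms from Lemma \ref{lem:bound_phi_eps}).

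For the \textbf{gradient estimate}, writing $v := U_{\varepsilon, z} + w_{\varepsilon, z}$, I would split
\[
\partial_i \widetilde{J}_\varepsilon \parens{z}
=
\underbrace{\Dif_{U_{\varepsilon, z}} J_\varepsilon \brackets{\dot{U}_{\varepsilon, z, i}}}_{(A)}
+
\underbrace{\parens*{\Dif_v J_\varepsilon - \Dif_{U_{\varepsilon, z}} J_\varepsilon} \brackets{\dot{U}_{\varepsilon, z, i}}}_{(B)}
+
\underbrace{\Dif_v J_\varepsilon \brackets{\dot{w}_{\varepsilon, z, i}}}_{(C)}.
\]
Term $(A)$ carries the main contribution. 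As in Lemma \ref{lem:pseudo_critical} it reduces to $\int \parens{V_\varepsilon - V_\varepsilon \parens{z}} U_{\varepsilon, z} \dot{U}_{\varepsilon, z, i}$ up to an $O \parens{\varepsilon^2}$ coming from the $\phi_\varepsilon$-term. Replacing $\dot{U}_{\varepsilon, z, i}$ by $- \partial_i U_{\varepsilon, z}$ (the error being $O \parens{\varepsilon}$ in $H^1$ by \eqref{eqn:aux:22}, hence contributing $O \parens{\varepsilon^2}$ after pairing with $V_\varepsilon - V_\varepsilon\parens{z}$), writing $U_{\varepsilon, z} \partial_i U_{\varepsilon, z} = \tfrac12 \partial_{x_i} \parens*{U_{\varepsilon, z}^2}$ and integrating by parts turns the integral into $\tfrac{\varepsilon}{2} \int \partial_i V \parens{\varepsilon \cdot} U_{\varepsilon, z}^2$; Taylor-expanding $\partial_i V \parens{\varepsilon \cdot}$ at $\varepsilon z$ yields $\tfrac{\varepsilon}{2} \partial_i V \parens{\varepsilon z} \norm{U_{\varepsilon, z}}_{L^2}^2$ with an $O \parens{\varepsilon^2}$ error, and the scaling $\norm{U_{\varepsilon, z}}_{L^2}^2 = V \parens{\varepsilon z}^{\theta - 1} \norm{U}_{L^2}^2$ produces the announced leading term $\varepsilon a \parens{\varepsilon z} \partial_i V \parens{\varepsilon z}$ \emph{provided} $\tfrac12 \norm{U}_{L^2}^2 = \theta C_0$. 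I expect this identity to be the crux: it does not follow from Nehari alone, but combining Nehari with the Pohozaev identity for $- \Delta U + U = U \abs{U}^{p - 1}$ gives $\norm{U}_{L^2}^2 = \parens*{\tfrac{3}{p + 1} - \tfrac12} \norm{U}_{L^{p + 1}}^{p + 1}$, and an elementary manipulation of the exponents (using $\theta - 1 = \tfrac{2}{p - 1} - \tfrac32$) shows this equals $2 \theta C_0$.

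For the \textbf{lower-order terms}, in $(B)$ I would use \eqref{eqn:aux:13} to replace $\Dif_v J_\varepsilon - \Dif_{U_{\varepsilon, z}} J_\varepsilon$ by $\Dif^2_{U_{\varepsilon, z}} J_\varepsilon \brackets{w_{\varepsilon, z}, \cdot}$ up to $\parens*{\norm{w_{\varepsilon, z}}_{H^1}^2 + \norm{w_{\varepsilon, z}}_{H^1}^p + \varepsilon^2} \norm{\dot{U}_{\varepsilon, z, i}}_{H^1}$; by $\norm{w_{\varepsilon, z}}_{H^1} \lesssim \eta_{\varepsilon, z}$ and the definition of $\mu$ this is $\lesssim \varepsilon^{1 + \mu} \abs{\nabla V \parens{\varepsilon z}}^{1 + \mu} + \varepsilon^2$, while the principal part is bounded through Lemma \ref{lem:estimate_D^2J_eps} by $\eta_{\varepsilon, z}^2 \lesssim \varepsilon^2$. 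For $(C)$, the auxiliary equation forces $\nabla J_\varepsilon \parens{v} \in \Tan_{U_{\varepsilon, z}} Z_\varepsilon$, say $\nabla J_\varepsilon \parens{v} = \sum_j c_{\varepsilon, z, j} \dot{U}_{\varepsilon, z, j}$; differentiating $\angles{w_{\varepsilon, z} \mid \dot{U}_{\varepsilon, z, j}}_{H^1} = 0$ in $z_i$ gives $\angles{\dot{w}_{\varepsilon, z, i} \mid \dot{U}_{\varepsilon, z, j}}_{H^1} = - \angles{w_{\varepsilon, z} \mid \ddot{U}_{\varepsilon, z, i, j}}_{H^1}$, whence $(C) = - \sum_j c_{\varepsilon, z, j} \angles{w_{\varepsilon, z} \mid \ddot{U}_{\varepsilon, z, i, j}}_{H^1}$. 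Estimating $\abs{c_{\varepsilon, z, j}} \lesssim \norm{\nabla J_\varepsilon \parens{v}}_{H^1} \lesssim \eta_{\varepsilon, z}$ (via \eqref{eqn:aux:10}, Lemma \ref{lem:pseudo_critical}, and the near-orthogonality of the $\dot{U}_{\varepsilon, z, j}$ from Lemma \ref{lem:tangent}), using $\norm{w_{\varepsilon, z}}_{H^1} \lesssim \eta_{\varepsilon, z}$ and the $H^1$-boundedness of the second $z$-derivatives $\ddot{U}_{\varepsilon, z, i, j}$ (which follows from (\nameref{V_1}) as in Lemma \ref{lem:tangent}) yields $(C) \lesssim \eta_{\varepsilon, z}^2 \lesssim \varepsilon^2$. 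Collecting $(A)$, $(B)$, $(C)$ over $i \in \set{1, 2, 3}$ gives the gradient estimate. The main obstacle is genuinely the coefficient identity in $(A)$; the remaining terms are routine applications of the estimates already in place.
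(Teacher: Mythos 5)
Your proposal is correct, and its overall architecture (expand $\widetilde{J}_\varepsilon$ around $U_{\varepsilon,z}$, isolate $\Dif_{U_{\varepsilon,z}}J_\varepsilon\brackets{\dot U_{\varepsilon,z,i}}$ as the principal part of $\partial_i\widetilde J_\varepsilon$, and control the corrections via Lemmas \ref{lem:pseudo_critical}, \ref{lem:soln_aux}, \ref{lem:approx_der_J}, \ref{lem:estimate_D^2J_eps}) coincides with the paper's. The genuine divergence is in how the leading coefficient $a\parens{\varepsilon z}=\theta C_0V\parens{\varepsilon z}^{\theta-1}$ is identified. The paper differentiates in $z_i$ the identity $J_\varepsilon\parens{U_{\varepsilon,z}}=C_0V\parens{\varepsilon z}^\theta+\tfrac12\int\parens{V_\varepsilon-V\parens{\varepsilon z}}U_{\varepsilon,z}^2+\dots$, so the term $\varepsilon a\parens{\varepsilon z}\partial_iV\parens{\varepsilon z}$ falls out of the chain rule applied to $C_0V^\theta$, and the only delicate leftover, $\tfrac{\varepsilon}{2}\parens{2\int\nabla V\parens{\varepsilon z}\cdot\parens{x-z}U_{\varepsilon,z}\partial_iU_{\varepsilon,z}+\partial_iV\parens{\varepsilon z}\norm{U_{\varepsilon,z}}_{L^2}^2}$, is killed by an integration by parts and a parity argument — no Pohozaev identity is needed. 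You instead compute $\Dif_{U_{\varepsilon,z}}J_\varepsilon\brackets{\dot U_{\varepsilon,z,i}}$ head-on, integrate by parts to reach $\tfrac{\varepsilon}{2}\partial_iV\parens{\varepsilon z}\norm{U_{\varepsilon,z}}_{L^2}^2$, and then must prove $\tfrac12\norm{U}_{L^2}^2=\theta C_0$, which you correctly obtain from Nehari plus Pohozaev (your arithmetic checks out: $\norm{U}_{L^2}^2=\tfrac{5-p}{2\parens{p+1}}\norm{U}_{L^{p+1}}^{p+1}=2\theta C_0$; the same identity also follows more cheaply by differentiating $\overline I_\lambda\parens{\lambda^{2/\parens{p-1}}U\parens{\lambda\cdot}}=C_0\lambda^{2\theta}$ in $\lambda$ and using criticality, which is essentially what the paper's bookkeeping encodes). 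A second, smaller difference: for the term $\Dif_{U_{\varepsilon,z}+w_{\varepsilon,z}}J_\varepsilon\brackets{\dot w_{\varepsilon,z,i}}$ the paper uses the crude bound $\norm{\Dif_{U_{\varepsilon,z}}J_\varepsilon}_{H^{-1}}\norm{\dot w_{\varepsilon,z,i}}_{H^1}\lesssim\parens{\varepsilon\abs{\nabla V\parens{\varepsilon z}}+\varepsilon^2}^{1+\mu}$ after splitting off the second-order remainder, whereas you exploit the Lagrange-multiplier structure $\nabla J_\varepsilon\parens{v}\in\Tan_{U_{\varepsilon,z}}Z_\varepsilon$ together with the differentiated orthogonality constraint to get the sharper $O\parens{\varepsilon^2}$; both suffice for the stated estimate. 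In short: same skeleton, two valid and interchangeable treatments of the key coefficient, with your route more computational but self-contained modulo Pohozaev, and the paper's route slicker at the price of the parity cancellation.
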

\begin{proof}
Let us proceed by steps.

\emph{A decomposition of $\widetilde{J}_\varepsilon$.}
The following decomposition of $\widetilde{J}_\varepsilon$ was inspired by Ambrosetti, Malchiodi and Secchi's \cite[(32)]{AMS01} and may be proved accordingly:
\[
\widetilde{J}_\varepsilon \parens{z}
=
C_0 V \parens{\varepsilon z}^\theta
+
\Lambda_\varepsilon \parens{z}
+
\Omega_\varepsilon \parens{z}
+
\Psi_\varepsilon \parens{z},
\]
where
\[
	\Lambda_\varepsilon\parens{z}
	:=
	\frac{1}{2}
	\int
		\left(
			V_\varepsilon-V_\varepsilon\parens{z}
		\right)
		U_{\varepsilon,z}^2
	+
	\int\left(
			V_\varepsilon-V_\varepsilon\parens{z}
		\right)
		U_{\varepsilon,z} w_{\varepsilon,z} \,,
\]
\[
\Omega_\varepsilon \parens{z}
:=
\frac{3}{8}
\int
	\phi_\varepsilon \parens{U_{\varepsilon,z}+w_{\varepsilon,z}}
	\parens{U_{\varepsilon,z}+w_{\varepsilon,z}}^2
-
\frac{1}{8 \varepsilon^2}
\norm{\phi_\varepsilon \parens{U_{\varepsilon,z}+w_{\varepsilon,z}}}_{\mathcal{D}^{1, 2}}^2
\]
and
\[
\Psi_\varepsilon \parens{z}
:=
\frac{1}{2} \norm{w_{\varepsilon,z}}_{H^1_\varepsilon}^2
-
\frac{1}{p + 1}
\int\Big(
	\abs{U_{\varepsilon,z}+w_{\varepsilon,z}}^{p+1}
	-
	U_{\varepsilon,z}^{p+1}
	-
	\parens{p+1} U_{\varepsilon,z}^p w_{\varepsilon,z}
\Big).
\]

\emph{Expansion of $\widetilde{J}_\varepsilon$.}
Let us estimate the terms $\Lambda_\varepsilon$, $\Omega_\varepsilon$ and $\Psi_\varepsilon$. In view of Lemma \ref{lem:soln_aux}, it suffices to argue as in the proof of Lemma \ref{lem:pseudo_critical} to obtain
\[
\abs{\Lambda_\varepsilon \parens{z}}
\lesssim
\varepsilon \abs{\nabla V \parens{\varepsilon z}} + \varepsilon^2.
\]
We know that
\[
\set{
	U_{\varepsilon, z} + w_{\varepsilon, z}
	:
	\parens{\varepsilon, z} \in \ooi{0, \varepsilon_0} \times \real^3
}
\]
is a bounded subset of $H^1 \setminus \set{0}$, so it follows from Lemma \ref{lem:bound_phi_eps} that
$\abs{\Omega_\varepsilon \parens{z}} \lesssim \varepsilon^2$. Finally,
\[
\abs{\Psi_\varepsilon \parens{z}}
\lesssim
\norm{w_{\varepsilon, z}}_{H^1}^2
+
\norm{w_{\varepsilon, z}}_{H^1}^{p + 1}
\lesssim
\Big(\varepsilon \abs{\nabla V \parens{\varepsilon z}} + \varepsilon^2\Big)^2.
\]

\emph{Expansion of $\nabla \widetilde{J}_\varepsilon$.}
By summing and subtracting terms, we obtain
\begin{eqnarray*}
\partial_i \widetilde{J}_\varepsilon \brackets{z}
&=&
\Dif_{U_{\varepsilon, z}} J_\varepsilon \brackets{
	\dot{U}_{\varepsilon, z, i}
}
+
\Dif_{U_{\varepsilon, z}} J_\varepsilon \brackets{
	\dot{w}_{\varepsilon, z, i}	
}
\\
&+&
\left(
	\Dif_{U_{\varepsilon, z} + w_{\varepsilon, z}} J_\varepsilon 
	\brackets{\dot{U}_{\varepsilon, z, i}}
	-
	\Dif_{U_{\varepsilon, z}} J_\varepsilon
	\brackets{\dot{U}_{\varepsilon, z, i}}
	-
	\Dif^2_{U_{\varepsilon, z}} J_\varepsilon
	\brackets{w_{\varepsilon,z}, \dot{U}_{\varepsilon, z, i}}
\right)
\\
&+&
\left(
	\Dif_{U_{\varepsilon, z} + w_{\varepsilon, z}} J_\varepsilon 
	\brackets{\dot{w}_{\varepsilon, z, i}}
	-
	\Dif_{U_{\varepsilon, z}} J_\varepsilon
	\brackets{\dot{w}_{\varepsilon, z, i}}
	-
	\Dif^2_{U_{\varepsilon, z}} J_\varepsilon
	\brackets{w_{\varepsilon,z}, \dot{w}_{\varepsilon, z, i}}
\right) 
\\
&+&
\Dif^2_{U_{\varepsilon, z}} J_\varepsilon
\brackets{w_{\varepsilon,z}, \dot{U}_{\varepsilon, z, i}}
+
\Dif^2_{U_{\varepsilon, z}} J_\varepsilon
\brackets{w_{\varepsilon,z}, \dot{w}_{\varepsilon, z, i}}, 
\end{eqnarray*}
and thus
\begin{eqnarray} \label{eqn:aux:14}
\abs*{
	\partial_i \widetilde{J}_\varepsilon \parens{z}
	-
	\Dif_{U_{\varepsilon, z}} J_\varepsilon \brackets{\dot{U}_{\varepsilon, z, i}}
}
&\leq&
\abs*{\Dif_{U_{\varepsilon, z}} J_\varepsilon \brackets{\dot{w}_{\varepsilon, z, i}}}
\\
&+&
\norm*{\mathcal{R}_{\varepsilon, z}}_{H^{-1}}
\parens*{
	\norm{\dot{U}_{\varepsilon, z, i}}_{H^1}
	+
	\norm{\dot{w}_{\varepsilon, z, i}}_{H^1}
}\nonumber
\\
&+&
\abs*{
	\Dif^2_{U_{\varepsilon, z}} J_\varepsilon
	\brackets{w_{\varepsilon,z}, \dot{U}_{\varepsilon, z, i}}
}
+
\abs*{
	\Dif^2_{U_{\varepsilon, z}} J_\varepsilon
	\brackets{w_{\varepsilon,z}, \dot{w}_{\varepsilon, z, i}}
}, \nonumber
\end{eqnarray}
where $\mathcal{R}_{\varepsilon, z} \colon H^1 \to \real$ is given by
\[
\mathcal{R}_{\varepsilon, z} \brackets{u}
=
\Dif_{U_{\varepsilon, z} + w_{\varepsilon, z}} J_\varepsilon \brackets{u}
-
\Dif_{U_{\varepsilon, z}} J_\varepsilon \brackets{u}
-
\Dif^2_{U_{\varepsilon, z}} J_\varepsilon \brackets{w_{\varepsilon,z}, u}.
\]

We want to estimate the terms on the right-hand side of \eqref{eqn:aux:14}. In view of Lemmas \ref{lem:pseudo_critical} and \ref{lem:soln_aux},
\[
\abs*{\Dif_{U_{\varepsilon, z}} J_\varepsilon \brackets{\dot{w}_{\varepsilon, z, i}}}
\leq
\norm*{\Dif_{U_{\varepsilon, z}} J_\varepsilon}_{H^{-1}}
\norm{\dot{w}_{\varepsilon, z, i}}_{H^1}
\lesssim
\left(
	\varepsilon \abs{\nabla V \parens{\varepsilon z}} + \varepsilon^2
\right)^{1 + \mu}.
\]
Due to Lemmas \ref{lem:tangent}, \ref{lem:soln_aux} and \ref{lem:approx_der_J},
\[
\norm*{
	\mathcal{R}_{\varepsilon, z}
}_{H^{-1}}
\parens*{
	\norm{\dot{U}_{\varepsilon, z, i}}_{H^1}
	+
	\norm{\dot{w}_{\varepsilon, z, i}}_{H^1}
}
\lesssim
\left(\varepsilon \abs{\nabla V \parens{\varepsilon z}} + \varepsilon^2\right)^{1 + \mu}
+
\varepsilon^2.
\]
It follows from Lemmas \ref{lem:soln_aux}, \ref{lem:estimate_D^2J_eps} that
\[
\abs*{
	\Dif^2_{U_{\varepsilon, z}} J_\varepsilon
	\brackets{w_{\varepsilon,z}, \dot{U}_{\varepsilon, z, i}}
}
\leq
\norm*{
	\Dif^2_{U_{\varepsilon, z}} J_\varepsilon \brackets{
		\dot{U}_{\varepsilon, z, i}, \cdot
	}
}_{H^{-1}}
\norm{w_{\varepsilon,z}}_{H^1}
\lesssim
\left(
	\varepsilon \abs{\nabla V \parens{\varepsilon z}} + \varepsilon^2
\right)^2.
\]
Similarly, Lemma \ref{lem:soln_aux} implies
\[
\abs*{
	\Dif^2_{U_{\varepsilon, z}} J_\varepsilon
	\brackets{w_{\varepsilon,z}, \dot{w}_{\varepsilon, z, i}}
}
\lesssim
\left(\varepsilon \abs{\nabla V \parens{\varepsilon z}} + \varepsilon^2\right)^{1 + \mu}.
\]
We conclude that
\begin{equation} \label{eqn:aux:15}
\abs*{
	\partial_i \widetilde{J}_\varepsilon \parens{z}
	-
	\Dif_{U_{\varepsilon, z}} J_\varepsilon \brackets{\dot{U}_{\varepsilon, z, i}}
}
\lesssim
\varepsilon^{1 + \mu}
\abs{\nabla V \parens{\varepsilon z}}^{1 + \mu}
+
\varepsilon^2.
\end{equation}

In view of \eqref{eqn:aux:15}, we  have just to prove that
\[
\abs*{
	\Dif_{U_{\varepsilon, z}} J_\varepsilon \brackets{\dot{U}_{\varepsilon, z, i}}
	-
	\varepsilon a \parens{\varepsilon z} \partial_i V \parens{\varepsilon z}
}
\lesssim
\varepsilon^2.
\]
Indeed, by taking the $i$\textsuperscript{th} partial derivative with respect to $z$ of
\[
J_\varepsilon \parens{U_{\varepsilon, z}}
=
C_0 V \parens{\varepsilon z}^\theta
+
\frac{1}{2} \int
	(V_\varepsilon - V \parens{\varepsilon z})
	U_{\varepsilon, z}^2
+
\frac{3}{8}
\int \phi_\varepsilon \parens{U_{\varepsilon, z}} U_{\varepsilon, z}^2
-
\frac{1}{8 \varepsilon^2}
\norm{\phi_\varepsilon \parens{U_{\varepsilon, z}}}_{\mathcal{D}^{1, 2}}^2,
\]
we obtain
\begin{eqnarray*}
\Dif_{U_{\varepsilon, z}} J_\varepsilon \brackets{\dot{U}_{\varepsilon, z, i}}
-
\varepsilon a \parens{\varepsilon z} \partial_i V \parens{\varepsilon z}
&=& 
\int
\Big(
		V_\varepsilon \parens{x}
		-
		V \parens{\varepsilon z}
		-
		\varepsilon \nabla V \parens{\varepsilon z} \cdot \parens{x - z}
	\Big)
	U_{\varepsilon, z} \parens{x}
	\dot{U}_{\varepsilon, z, i} \parens{x}
\dif x
\\
&+&
\varepsilon
\int
	\nabla V \parens{\varepsilon z} \cdot \parens{x - z}
	U_{\varepsilon, z} \parens{x}
\Big(
		\dot{U}_{\varepsilon, z, i} \parens{x}
		+
		\partial_i U_{\varepsilon, z} \parens{x}
	\Big)
\dif x
\\
&-&
\frac{\varepsilon}{2}
\left(
	2 \int
		\nabla V \parens{\varepsilon z} \cdot \parens{x - z}
		U_{\varepsilon, z} \parens{x}
		\partial_i U_{\varepsilon, z} \parens{x}
	\dif x
	+
	\partial_i V \parens{\varepsilon z} \norm{U_{\varepsilon, z}}_{L^2}^2
\right)
\\
&+&
\frac{3}{8}
\int 
	\Dif_{U_{\varepsilon, z}} \phi_\varepsilon \brackets{\dot{U}_{\varepsilon, z, i}}
	U_{\varepsilon, z}^2
+
\frac{3}{4}
\int
	\phi_\varepsilon \parens{U_{\varepsilon, z}} U_{\varepsilon, z} \dot{U}_{\varepsilon, z, i}
\\
&-&
\frac{1}{4 \varepsilon^2}
\angles*{
	\Dif_{U_{\varepsilon, z}} \phi_\varepsilon \brackets{\dot{U}_{\varepsilon, z, i}}
	~\middle|~
	\phi_\varepsilon \parens{U_{\varepsilon, z}}
}_{\mathcal{D}^{1, 2}}.
\end{eqnarray*}
Consider the terms on the right-hand side. In view of (\nameref{V_1}) and Lemma \ref{lem:tangent}, the terms on the first and second lines are of order $\varepsilon^2$. The term on the third line is zero. Indeed, an integration by parts shows that
\begin{multline*}
2 \int_0^\infty
	\nabla V \parens{\varepsilon z} \cdot \parens{x - z}
	U_{\varepsilon, z} \parens{x}
	\partial_i U_{\varepsilon, z} \parens{x}
\dif x_i
+
\partial_i V \parens{\varepsilon z}
\int_0^\infty
	U_{\varepsilon, z} \parens{x}^2
\dif x_i
=
\\
=
\int_0^\infty
	\nabla V \parens{\varepsilon z} \cdot \parens{x - z}
	U_{\varepsilon, z} \parens{x}^2
\dif x_i
=
0
\end{multline*}
because $x_i \mapsto U_{\varepsilon, z} \parens{x}$ is even and $x_i \mapsto \nabla V \parens{\varepsilon z} \cdot \parens{x - z}$ is odd. In view of Lemma \ref{lem:bound_phi_eps} and Corollary \ref{cor:estimate_Dif_phi_eps}, the terms on the fourth and fifth lines are of order $\varepsilon^2$.
\end{proof}

We can finally prove our main result by arguing similarly as in \cite[Proof of Theorem 8.5]{AM06}.

\begin{proof}[Proof of Theorem \ref{thm:mult}]

\emph{Multiplicity of solutions.}
In view of Lemma \ref{lem:expansion}, the result follows from an application of Theorem \ref{thm:approx_C1} by considering $f := C_0 V^\theta$; $g := \widetilde{J}_\varepsilon \parens{\cdot / \varepsilon}$; $\mathcal{M} := M$ and letting $\mathcal{N}$ be a bounded neighborhood of $\mathcal{M}$ in $\real^3$.

\emph{Concentration.}
We know that $\ker \Dif_x^2 V = \Tan_x M$ for every $x \in M$, so, up to shrinking $\mathcal{N}$, we can suppose that if $x \in \overline{\mathcal{N}}$ and $\nabla V\parens{x}=0$, then $x \in M$. Due to the previous result, we can fix $\varepsilon_0 \in \ooi{0, 1}$ and a family
\[
\set*{
	U_{\varepsilon, z_\varepsilon}+w_{\varepsilon, z_\varepsilon} =: u_\varepsilon \in H^1
	\mid
	\nabla J_\varepsilon \parens{u_\varepsilon} = 0
}_{\varepsilon \in \ooi{0, \varepsilon_0}}.
\]
It follows from Lemma \ref{lem:soln_aux} that
$\norm{u_\varepsilon - U_{\varepsilon, {z_\varepsilon}}}_{H^1} \to 0$
as $\varepsilon \to 0^+$. Suppose that $x_0 \in \overline{\mathcal{N}}$ is an accumulation point of $\set{z_\varepsilon}_{\varepsilon \in \ooi{0, \varepsilon_0}}$. There exists $\set{\varepsilon_n}_{n \in \nat} \subset \ooi{0, \varepsilon_0}$ such that $z_{\varepsilon_n} \to x_0$ as $n \to \infty$, so it follows from Proposition \ref{prop:conc} that $x_0 \in M$, hence the result.
\end{proof}

\medskip

\subsection*{Acknowledgment and statements}

This study was financed in part by the Coordenação de Aperfeiçoamento de Pessoal de Nível Superior - Brasil (CAPES) - Finance Code 001. G. de Paula Ramos was supported by Capes. G. Siciliano was supported by Capes, CNPq, FAPDF Edital 04/2021 - Demanda Espont\^anea, Fapesp grant no. 2022/16407-1 (Brazil) and Indam (Italy).

\smallskip

The authors declare that they have no conflict of interest. 

\smallskip

Data sharing not applicable to this article as no datasets were generated or analyses during the current study.

\end{document}